\def\@tocline#1#2#3#4#5#6#7{\relax
  \ifnum #1>\c@tocdepth % then omit
  \else
    \par \addpenalty\@secpenalty\addvspace{#2}%
    \begingroup \hyphenpenalty\@M
    \@ifempty{#4}{%
      \@tempdima\csname r@tocindent\number#1\endcsname\relax
    }{%
      \@tempdima#4\relax
    }%
    \parindent\z@ \leftskip#3\relax \advance\leftskip\@tempdima\relax
    \rightskip\@pnumwidth plus4em \parfillskip-\@pnumwidth
    #5\leavevmode\hskip-\@tempdima
      \ifcase #1
       \or\or \hskip 1em \or \hskip 2em \else \hskip 3em \fi%
      #6\nobreak\relax
    \dotfill\hbox to\@pnumwidth{\@tocpagenum{#7}}\par
    \nobreak
    \endgroup
  \fi}
\newtheorem{thm}{Theorem}%[section]
\newtheorem{prop}[thm]{Proposition}
\newtheorem{cor}[thm]{Corollary}
\newtheorem{conj}[thm]{Conjecture}%[prop]
\newtheorem{lem}[thm]{Lemma}
\theoremstyle{definition}
\newtheorem{defn}[thm]{Definition}
\newtheorem{remark}[thm]{Remark}
\newtheorem*{gremark}{General remark}
\newtheorem{question}[thm]{Question}
\newtheorem{example}[thm]{Example}
\numberwithin{equation}{section}
\newcommand{\moins}{\,-\!\!\!\!-\,}
\newcommand{\moinss}{-\!\!\!-}
\newcommand{\mk}{\mathfrak}
\newcommand{\Carn}{\textnormal{Carn}}
\newcommand{\Homeo}{\textnormal{Homeo}}
\newcommand{\mono}{\hookrightarrow}
\newcommand{\epi}{\twoheadrightarrow}
\newcommand{\C}{\mathbf{C}}
\newcommand{\R}{\mathbf{R}}
\newcommand{\Z}{\mathbf{Z}}
\newcommand{\GL}{\mathrm{GL}}
\newcommand{\SL}{\mathrm{SL}}
\newcommand{\BS}{\textnormal{BS}}
\newcommand{\FT}{\textnormal{FT}}
\newcommand{\eps}{\varepsilon}
\DeclareMathOperator{\Ker}{Ker}
\DeclareMathOperator{\Hom}{Hom} 
\DeclareMathOperator{\Isom}{Isom}
\newcommand{\bd}{\partial} % mathoperator ajoute beaucoup d'espace....
\def\Aut{\mathop{\mathrm{Aut}}\nolimits}
\begin{document}

\title[On the QI classification of LC-groups]{On the quasi-isometric classification of locally compact groups}

%c rigidity of locally compact groups and a conjectural picture for hyperbolic amenable groups

%
\author{Yves Cornulier}
\address{CNRS and Laboratoire de Math\'ematiques\\
B\^atiment 425, Universit\'e Paris-Sud 11\\
91405 Orsay\\France}
\email{yves.cornulier@math.u-psud.fr}

\date{November 5, 2016 (later renumbering of sections/results to fit with published version)}%{\today}%December 10, 2012}
\keywords{Gromov hyperbolic group, locally compact group, amenable group, quasi-isometric rigidity, millefeuille spaces, trees, accessibility, symmetric spaces}
%

%%%%%%%%%%%%%%%%%%%%% NE PAS EFFACER %%%%%%%%%%%%%%%%%%
\subjclass[2010]{Primary 20F67; Secondary 05C63, 20E08, 22D05, 43A07, 53C30, 53C35, 57S30}

%(20Exx)		Structure and classification of infinite or finite groups
 %20E06  	Free products, free products with amalgamation, Higman-Neumann-Neumann extensions, and generalizations
 %20E08  	Groups acting on trees 
 %20E34  	General structure theorems

 %	20F67  	Hyperbolic groups and nonpositively curved groups

%22D05  	General properties and structure of locally compact groups

%	(05Cxx)		Graph theory
  %05C63  	Infinite graphs

%	57S30  	Discontinuous groups of transformations

%  43A07  	Means on groups, semigroups, etc.; amenable groups
%	53C30  	Homogeneous manifolds
%	53C35  	Symmetric spaces
%%%%%%%%%%%%%%%%%%%%%%%%%%%%%%%%%%%%%%%%%%%%%%%%%%%%%

\begin{abstract}
This (quasi-)survey addresses the quasi-isometry classification of locally compact groups, with an emphasis on amenable hyperbolic locally compact groups. This encompasses the problem of quasi-isometry classification of homogeneous negatively curved manifolds. A main conjecture provides a general description; an extended discussion reduces this conjecture to more specific statements.

In the course of the paper, we provide statements of quasi-isometric rigidity for general symmetric spaces of noncompact type and also discuss accessibility issues in the realm of compactly generated locally compact groups.
\end{abstract}
\maketitle

% We notably include a classification of ``commability" classes of focal hyperbolic locally compact groups.

%containing an amenable hyperbolic closed cocompact subgroup.

% We state several conjectures; one classifying homogeneous negatively curved manifolds up to quasi-isometry, one claiming that a homogeneous negatively curved manifold quasi-isometric to a homogenous graph of finite valency has to be quasi-isometric to a rank 1 symmetric space. 

%\renewcommand{\thesubsection}{\alph{subsection}}

\renewcommand{\thesection}{19.\arabic{section}}

\section{Introduction}
%\subsection{d'abord}
%\subsection{ensuite}
%\section{rien}

\addtocontents{toc}{\protect\setcounter{tocdepth}{1}}

\subsection{Locally compact groups as geometric objects}

It has long been well understood in harmonic analysis (notably in the study of unitary representations) that locally compact groups are the natural objects unifying the setting of connected Lie groups and discrete groups. In the context of geometric group theory, this is still far from universal. For a long period, notably in the 70s, this unifying point of view was used essentially by Herbert Abels, and, more occasionally, some other people including Behr, Guivarc'h, Houghton. The considerable influence of Gromov's work paradoxically favored the bipolar point of view discrete vs continuous, although Gromov's ideas were applicable to the setting of locally compact groups and were sometimes stated (especially in \cite{Gro87}) in an even greater generality.

If a locally compact group is generated by a compact subset $S$, it can be endowed with the word length with respect to $S$, and with the corresponding left-invariant distance. While this distance depends on the choice of $S$, the metric space $(G,d_S)$ ---~or the 1-skeleton of the corresponding Cayley graph~--- is uniquely determined by $G$ up to quasi-isometry, in the sense that if $T$ is another compact generating subset, the identity map $(G,d_S)\to (G,d_T)$ is a quasi-isometry. 

%Also, if $G$ is a connected Lie group and $d'$ is a left-invariant Riemannian distance, then $(G,d_S)\to (G,d')$ is also a quasi-isometry. 

We use the usual notion of Gromov-hyperbolicity \cite{Gro87} for geodesic metric spaces, which we simply call ``hyperbolic"; this is a quasi-isometry invariant. A locally compact group is called {\em hyperbolic} if it is compactly generated and if its Cayley graph with respect to some/any compact generating subset is hyperbolic.

This paper is mainly concerned with the quasi-isometric classification and rigidity of amenable hyperbolic locally compact groups among compactly generated locally compact groups. Within discrete groups, this problem is not deep: the answer is that it falls into two classes, finite groups and infinite virtually cyclic groups, both of which are closed under quasi-isometry among finitely generated groups. On the other hand, in the locally compact setting it is still an open problem. In order to tackle it, we need a significant amount of nontrivial preliminaries. Part of this paper (esp.\ Sections \ref{s_ss} and \ref{qitree}) appears as a kind of survey of important necessary results about groups quasi-isometric to symmetric spaces of noncompact type and trees, unjustly not previously stated in the literature but whose proofs gather various ingredients from existing work along with minor additional features. Although only the case of rank 1 and trees is needed for the application to hyperbolic groups, we state the theorems in a greater generality.

%The main discussion is in \S\ref{sec:conclu}.

%

\subsection{From negatively curved Lie groups to focal hyperbolic groups}

In 1974, in answer to a question of Milnor, Heintze \cite{Hein} characterized the connected Lie groups of dimension at least 2 admitting a left-invariant Riemannian metric of negative curvature as those of the form $G=N\rtimes\R$ where $\R$ acts on $N$ as a one-parameter group of contractions; the group $N$ is necessarily a simply connected nilpotent Lie group; such a group $G$ is called a {\em Heintze group}. He also showed that any negatively curved connected Riemannian manifold with a transitive isometry group admits a simply transitive isometry group; the latter is necessarily a Heintze group (if the dimension is at least 2). The action of a Heintze group $G=N\rtimes\R$ on the sphere at infinity $\partial G$ has exactly two orbits: a certain distinguished point $\omega$ and the complement $\partial G\smallsetminus\{\omega\}$, on which the action of $N$ is simply transitive. This sphere admits a visual metric, which depends on several choices and is not canonical; however its quasi-symmetric type is well-defined and is a functorial quasi-isometry invariant of $G$. The study of quasi-symmetric transformations of this sphere was used by Tukia, Pansu and R. Chow \cite{tukia86,pansu89m,chow} to prove the quasi-isometric rigidity of the rank one symmetric spaces of dimension at least 3. Pansu also initiated such a study for other Heintze groups \cite{pansu89m,pansu89d}.

On the other hand, hyperbolic groups were introduced by Gromov \cite{Gro87} in 1987. The setting was very general, but for many reasons (mainly unrelated to the quasi-isometric classification), their subsequent study was especially focused on discrete groups with a word metric. In particular, a common belief was that amenability is essentially incompatible with hyperbolicity. This is not true in the locally compact setting, since there is a wide variety of amenable hyperbolic locally compact groups, whose quasi-isometry classification is open at the moment. One purpose of this note is to describe the state of the art as regards this problem. 

In 2005, I asked Pierre Pansu whether there was a known characterization of (Gromov-) hyperbolic connected Lie groups, and he replied me that the answer could be obtained from his $L^p$-cohomology computations \cite{Pan07} (which were extracted from an unpublished manuscript going back to 1995) combined with a vanishing result later obtained by Tessera \cite{Te09}; an algebraic characterization of hyperbolic groups among connected Lie groups, based on this approach, was finally given in \cite{CoTe}, namely such groups are either compact, Heintze-by-compact, or compact-by-(simple of rank one). This approach consisted in proving, using structural results of connected Lie groups, that any connected Lie group not of this form cannot be hyperbolic, by showing that its $L^p$-cohomology in degree~1 vanishes for all~$p$.

In \cite{CCMT}, using a more global approach, namely by studying the class of {\em focal} hyperbolic groups, this was extended to a general characterization of all amenable hyperbolic locally compact groups, and more generally of all hyperbolic locally compact groups admitting a cocompact closed amenable subgroup (every connected locally compact group, hyperbolic or not, admits such a subgroup). 
\begin{thm}[\cite{CCMT}]\label{ccmti}
Let $G$ be a non-elementary hyperbolic compactly generated locally compact group with a cocompact closed amenable subgroup. Then exactly one of the following holds
\begin{enumerate}[(a)]
%\item\label{c1} $G$ is compact;
%\item\label{c2} $G$ admits $\Z$ as a cocompact lattice; equivalently $G$ has a maximal compact normal subgroup $W$ and $G/W$ is isomorphic to a closed cocompact subgroup of isometries of $\R$ (and thus is isomorphic to one of the 4 groups $\Isom^+(\Z)\simeq\Z$, $\Isom(\Z)\simeq D_\infty$, $\Isom^+(\R)\simeq\R$, or $\Isom(\R)$.
\item\label{c3} (focal case) $G$ is amenable and non-unimodular. Then $G$ is isomorphic to a semidirect product $N\rtimes\Z$ or $N\rtimes\R$, where the noncompact subgroup $N$ is compacted by the action of positive elements $t$ of $\Z$ or $\R$, in the sense that there exists a compact subset $K$ of $N$ such that $tKt^{-1}\subset K$ and $\bigcup_{n\ge 1}t^{-n}Kt^n=N$.
\item\label{c4} $G$ is non-amenable. Then $G$ admits a continuous, proper isometric, boundary-transitive (and hence cocompact) action on a rank~1 symmetric space of noncompact type, or a finite valency tree with no valency 1 vertex and not reduced to a line (necessarily biregular).
%on either a rank 1 symmetric space of noncompact type, or on a bounded valency tree (necessarily biregular) not of valency $\le 2$. 
\end{enumerate}
\end{thm}

%The cases of (\ref{c3}) and (\ref{c4}) are the non-elementary cases.

Groups in (\ref{c3}) are precisely the amenable non-elementary hyperbolic locally compact groups, and are called {\em focal hyperbolic} locally compact groups. 

We assumed for simplicity that $G$ is non-elementary in the sense that its boundary has at least 3 points (and is indeed uncountable), ruling out compact groups and 2-ended locally compact groups (which are described in Corollary \ref{2ended}). It should be noted that any group as in (\ref{c4}) admits a closed cocompact subgroup of the form in (\ref{c3}), but conversely most groups in (\ref{c3}) are not obtained this way, and are actually generally not quasi-isometric to any group as in (\ref{c4}), see the discussion in Section \ref{qia}. This is actually a source of difficulty in the quasi-isometry classification: namely those groups in (\ref{c3}) that embed cocompactly in a non-amenable group bear ``hidden symmetries"; see Conjecture \ref{nnprr} and the subsequent discussion.

%the groups thus obtained in (\ref{c3}), and groups as in (\ref{c3}) 

%that the case of (\ref{c3}) is considerably more general than the case of (\ref{c4}), since , namely the stabilizer of a boundary point, while a 

%, and more generally is not quasi-isometric to any non-amenable compactly generated locally compact group

For the discussion below, it is natural to split the class of focal hyperbolic locally compact groups $G$ into three subclasses (see \S\ref{tyh} for more details):

\begin{itemize}
\item $G$ is of {\em connected type} if its boundary is homeomorphic to a positive-dimensional sphere, or equivalently if it admits a continuous proper cocompact isometric action on a complete negatively curved Riemannian manifold of dimension $\ge 2$;
\item $G$ is of {\em totally disconnected type} if its identity component is compact, or equivalently if its boundary is a Cantor space, or equivalently if it admits a continuous proper cocompact isometric action on a regular tree of finite valency;
\item $G$ is of {\em mixed type} otherwise; then its boundary is connected but not locally connected.
\end{itemize}

\subsection{Synopsis}

We define the {\em commability} equivalence between locally compact groups as the equivalence relation generated by the requirement that any two locally compact groups $G_1,G_2$ with a continuous proper homomorphism $G_1\to G_2$ with cocompact image are commable. Thus $G_1$ and $G_2$ are commable if and only if there exists an integer $k$, a family of locally compact groups $G_1=H_0,$ $H_1,\dots,H_k=G_2$, and continuous proper homomorphisms $f_i$ with cocompact image, either from $H_i$ to $H_{i+1}$ or from $H_{i+1}$ to $H_i$. 
Obviously, commable compactly generated locally compact groups are quasi-isometric (the converse is not true: after I asked about a counterexample, Carette and Tessera checked that some free products of suitable lattices in Lie groups with $\Z$ are indeed quasi-isometric but not commable, see \S\ref{noncommable}).

A general study of commability is not an easy matter, as it is difficult in general to describe in a satisfactory way, given a locally compact group $G$ (e.g.\ a Baumslag-Solitar group as given in Remark \ref{bs}), those locally groups $H$ in which $G$ embeds as a cocompact subgroup. A study of commability in the realm of focal hyperbolic locally compact groups is carried out in Section \ref{sec:com}, with a comprehensive description (except in the totally disconnected case). Actually, the Mostow rigidity theorem \cite{Mos73} was maybe the first time that quasi-isometries were used, to solve a commability problem.

A comprehensive study of commability in the realm of focal hyperbolic locally compact groups is carried out in Section \ref{sec:com}. 

Section \ref{sec:conclu} addresses the quasi-isometry classification of focal hyperbolic groups, discussing, using the results of all the previous sections, around the following:

%The totally disconnected case is trivial: any two focal hyperbolic groups with compact identity connected components are quasi-isometric to the regular trivalent tree. The other cases are described by the following conjecture.

\begin{conj}{mainc}[Main conjecture]
Two hyperbolic locally compact groups $G,H$ with $G$ focal are quasi-isometric if and only if they are commable.
\end{conj}

%Note that 

This conjecture can be split between the {\em internal} case ($H$ focal) and the {\em external} case ($H$ non-focal), providing more explicit reformulations, which may seem unrelated at first sight. Notably, the external part of Conjecture \ref{mainc} has the following equivalent restatement:

\begin{conj}{anaqi}[slightly restated]
A focal hyperbolic locally compact group is quasi-isometric to a non-focal hyperbolic locally compact group if and only if it is quasi-isometric to a rank~1 symmetric space of noncompact type or a 3-regular tree.
\end{conj}

Because of the very special role played by rank~1 symmetric spaces and trees, and because the extensive literature about them is not formulated in the context of locally compact groups, the important results concerning their quasi-isometric rigidity are surveyed (and slightly extended) in Sections \ref{s_ss} and \ref{qitree}.

%Actually, we address general symmetric spaces of noncompact type in Section \ref{s_ss}. 

In the external part, the totally disconnected type of Conjecture \ref{mainc} is a bit at odds with the other type because there is a complete understanding of the quasi-isometric classification in this case (Section \ref{qitree}), so it is rather a question about commability itself. See \S\ref{sspecial} (esp.\ Question \ref{sqs}). After being asked in a first version of this survey, it has been solved by M.~Carette \cite{Ca}. 

The internal part of Conjecture \ref{mainc} ($H$ focal) can be split into three cases, according to the type of the focal group $G$ (see \S\ref{fh}). Here the totally disconnected case is an easy theorem rather than a conjecture, since all focal hyperbolic locally compact groups of totally disconnected are in the same commability class (Proposition \ref{tdck}). The remaining cases are the mixed type and the connected type. 

%As regards the 

%On the other hand, as regards the internal case 

%then follows from the results cited in Section \ref{qitree}. So the internal part of the conjecture boils down to when $G$ and $H$ are either both of focal of connected type, or focal of mixed type.

Recall that a {\em purely real Heintze group} is a Heintze group $N\rtimes\R$ as above, for which the action of $\R$ on the Lie algebra of $N$ has only real eigenvalues. Heintze groups are focal of connected type, and actually every focal group of connected type is commable to a purely real Heintze group, unique up to isomorphism (see \S\ref{sct}). This gives a reduction of the internal connected type case of Conjecture \ref{mainc} to the following simpler statement:

%Lie group having a semidirect product decomposition $N\rtimes\R$ such that $N$ is a simply connected nilpotent Lie group and the action $(\alpha_t)_{t\in\R}$ of $\R$ on $N$ is purely real contracting, in the sense that for all $t>0$ the complex eigenvalues of $\alpha_t$ on the Lie algebra of $N$ are all real and belong to $\mathopen]0,1\mathclose[$.

\begin{conj}%\label{int_con}
Any two purely real Heintze groups are quasi-isometric if and only if they are isomorphic.
\end{conj}

The mixed type case can also be reduced to a similar statement (Conjecture \ref{int_mi}), by finding, in each commability class of focal hyperbolic locally compact group of mixed type, a given group $H[\varpi,q]$, depending on three independent ``parameters": a purely real Heintze group $H$, a non-power integer $q$, and a positive real number $\varpi$. 

It turns out (see Theorem \ref{th_mi} extracted from \cite{Co13}) that the quasi-isometry class of $H[\varpi,q]$
\begin{itemize}
\item determines the quasi-isometry class of $H$ (by a simple argument based on the boundary);
\item determines the real number $\varpi$, using a computation of $L^p$-cohomology in degree one by the author and Tessera \cite{CoTe}, strongly inspired by Pansu \cite{Pan07};
\item and finally also determines the non-power integer $q$, by a recent result of Dymarz about the large-scale geometry of focal groups of mixed type, 
relying on the study of the fine metrical structure of their boundary.
\end{itemize}

\begin{gremark}This paper may seem to reduce, for expository matters, the study of the quasi-isometry equivalence relation between hyperbolic groups to the determination of quasi-isometry classes. It should by no means be the only point of view; the study of quasi-isometry invariants, such as various kinds of dimensions and more refined ones, allows to shed light on the fine geometric structure of many groups. For this reason, the large-scale study of real Heintze groups should not be reduced to aiming at proving Conjecture \ref{int_con} (which reduces the QI-classification to a classification up to isomorphism, which is, in a certain sense, a wild problem), and even a proof of the latter would not supersede the relevance of the study of these invariants.
\end{gremark}

Here is an outline of the sequel.

\begin{itemize}
\item Section \ref{s_p} contains some preliminary material, notably relying on \cite{CCMT}.
\item In Section \ref{s_ss}, we give the quasi-isometric rigidity statements for symmetric spaces of noncompact type in the locally compact setting. These results are especially due to Kleiner-Leeb, Tukia, Pansu, R.~Chow, Casson-Jungreis and Gabai.
\item In Section \ref{qitree}, we give the quasi-isometric rigidity statements for trees in the locally compact setting, emphasizing on the notion of accessibility (in its group-theoretic and its graph-theoretic versions). These results are notably due to Stallings, Dunwoody, Abels, Thomassen-Woess, Mosher-Sageev-Whyte, and Kr\"on-M\"oller.
\item Section \ref{sec:com} provides a detailed description of  commability classes between focal groups, and between focal and non-focal groups;
\item The core of this paper is Section \ref{sec:conclu}. It contains a discussion about the main conjecture, the link with its specifications, and surveys the main known cases; the first of which being due to P.~Pansu, while recent progress have notably been made by X.~Xie and T.~ Dymarz.
\end{itemize}

%We give some conjectural statements giving a general picture of the quasi-isometry classification of hyperbolic amenable locally compact groups. We survey the various known results towards this conjecture, on the one hand those following from the previous sections (regarding rank one symmetric spaces and trees), and on the other hand those concerning other focal hyperbolic locally compact groups, which essentially boil down to the quasi-isometry classification of negatively curved homogeneous manifolds.

\noindent {\bf Acknowledgements.} I thank Tullia Dymarz, Pierre Pansu, and Romain Tessera for useful discussions, comments, and corrections.

%%%%%%%%%%%%%%%%%%%%%%%%%%%%%%%%%%%%%%%%%%%%%%%%%%%%%%%%%%%%%%%%%%%%%%%%%%%
\setcounter{tocdepth}{2}
\tableofcontents

%%%%%%%%%%%%%%%%%%%%%%%%%%%%%%%%%%%%%%%%%%%%%%%%%%%%%%%%%%%%%%%%%%%%%%%%%%%

\section{Preliminaries}\label{s_p}%%%%%%%%%%%%%%%%%%%%%%%%%%%%%%%%%%%%%%%%%%
%\numberwithin{prop}{subsection} 
\addtocontents{toc}{\protect\setcounter{tocdepth}{2}}
We freely use the shorthand LC-group for locally compact group, and CGLC-group for compactly generated LC-group.

\subsection{Quasi-isometries}\label{s_qi}\label{s_qii}
%\subsubsection{The large-scale language}\label{s_qii}
\begin{center}{\bf The large-scale language}\end{center}\vspace{-0.1cm}

Recall that a map $f:X\to Y$ between metric spaces is a {\em large-scale Lipschitz map} if there exist $\mu>0$ and $\alpha\in\R$ such that 
\[d(f(x_1),f(x_2))\le \mu d(x_1,x_2)+\alpha\quad\forall x_1,x_2\in X;\] we then say $f$ is $(\mu,\alpha)$-Lipschitz. Maps $f:X\to Y$ are at bounded distance, denoted $f\sim f'$ if $\sup_{x\in X}d(f(x),f'(x))<\infty$; if this supremum is bounded by $\alpha$ we write $f\stackrel{\alpha}\sim f'$.

A quasi-isometry $f:X\to Y$ is a large-scale Lipschitz map such that there exists a large-scale Lipschitz map $f':Y\to X$ such that $f\circ f'\sim\textnormal{id}_Y$ and $f'\circ f\sim\textnormal{id}_X$; the map $f'$ is called an {\em inverse} quasi-isometry to $f$; it is unique modulo~$\sim$.

A large-scale Lipschitz map $X\to Y$ is {\em coarsely proper} if, using the convention $\inf\emptyset=+\infty$, the function $F(r)=\inf\{d(f(x_1),f(x_2)):\;d(x_1,x_2)\ge r\}$ satisfies $\lim_{+\infty}F=+\infty$. 

Every CGLC-group $G$ can be endowed with the left-invariant distance defined by the word length with respect to a compact generating subset. Given any two such distances, the identity map is a quasi-isometry, and therefore the notions of large-scale Lipschitz map, quasi-isometry, etc.\ from or into $G$ are independent of the choice of a compact generating set. 

%\subsubsection{Hyperbolicity}
\begin{center}{\bf Hyperbolicity}\end{center}\vspace{-0.1cm}

A geodesic metric space is {\em hyperbolic} if there exists $\delta\ge 0$ such that for every triple of geodesic segments $[ab],[bc],[ac]$ and $x\in [bc]$ we have $d(x,[ab]\cup [ac])\le\delta$. To be hyperbolic is a quasi-isometry invariant among geodesic metric spaces. Thus a locally compact group is called {\em hyperbolic} if it is compactly generated and the 1-skeleton of its Cayley graph with respect to some/any compact generating subset is hyperbolic. By \cite[\S 2]{CCMT}, this holds if and only if it admits a continuous proper cocompact isometric action on a proper geodesic hyperbolic metric space.

%\subsubsection{Metric amenability}
\begin{center}{\bf Metric amenability}\end{center}\vspace{-0.1cm}
A locally compact group with left Haar measure $\lambda$ is {\em amenable} (resp.\ {\em metrically amenable}) if for every compact subset $S$ and $\eps>0$ there exists a compact subset $F$ of positive Haar measure such that $\lambda(SF\smallsetminus F)/\lambda(F)\le\eps$, resp.\ $\lambda(FS\smallsetminus F)/\lambda(F)\le\eps$.

Note that for a left-invariant distance, $FS$ is the ``1-thickening" of $F$ and justifies the adjective ``metric". The following lemma is \cite[Theorem 2]{Tes}, see also \cite[\S 4.C]{CH}.

\begin{lem}\label{amqi}
We have
\begin{enumerate}[(a)]
\item A locally compact group is metrically amenable if and only if it is both amenable and unimodular;
\item to be metrically amenable is a quasi-isometry invariant among CGLC-groups.\qed
\end{enumerate}
\end{lem}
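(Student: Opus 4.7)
For (a), the strategy is to show that metric amenability forces unimodularity, after which the two F{\o}lner conditions become equivalent by an inversion trick. I would first show metric amenability $\Rightarrow$ unimodularity: given $s\in G$, apply the metric F{\o}lner condition with $S=\{e,s\}$ and a small $\eps>0$. Since $Fs\subset F\cup(FS\smallsetminus F)$, one has $\Delta(s)\lambda(F)=\lambda(Fs)\le(1+\eps)\lambda(F)$, so $\Delta(s)\le 1+\eps$. Letting $\eps\to 0$ and applying the same to $s^{-1}$ forces $\Delta(s)=1$. Once $G$ is unimodular, $\lambda$ is bi-invariant and satisfies $\lambda(A^{-1})=\lambda(A)$. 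An inversion trick then gives both implications: if $F_0$ satisfies the left F{\o}lner inequality $\lambda(S^{-1}F_0\smallsetminus F_0)\le\eps\lambda(F_0)$, then $F:=F_0^{-1}$ satisfies $\lambda(FS\smallsetminus F)\le\eps\lambda(F)$, since $(S^{-1}F_0\smallsetminus F_0)^{-1}=FS\smallsetminus F$; and symmetrically. This yields metric amenability $\Leftrightarrow$ amenable $+$ unimodular.

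For (b) the content is geometric. In a CGLC-group $G$ with compact symmetric generating set $S$, the set $FS$ is precisely the $1$-neighbourhood of $F$ in the associated left-invariant word metric, so the metric F{\o}lner condition asserts the existence of sets whose metric $1$-thickening has volume within a factor $1+\eps$ of the original. After passing to a maximal $1$-separated cobounded net $\Sigma\subset G$, on which left Haar measure is comparable (on bounded-radius balls) to counting measure, the condition translates into a purely combinatorial coarse-isoperimetric property of $\Sigma$ regarded as a graph of bounded valency. A quasi-isometry $\phi:G\to H$ between CGLC-groups can then be restricted, up to bounded distortion, to a map between the chosen nets which has bounded-multiplicity fibres; pushing a F{\o}lner sequence forward under such a map yields a F{\o}lner sequence in $H$'s net, and hence metric amenability of $H$.

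The main obstacle in (b) is precisely this quantitative comparison: one must control how left Haar volume and metric $1$-thickenings interact with the passage to a discrete net and with the QI map $\phi$, tracking simultaneously the multiplicative distance distortion of $\phi$ and the bounded multiplicities of its fibres. This is the substance of Tessera's isoperimetric-profile argument, and in a fully detailed proof I would reduce the statement to the coarse-F{\o}lner formulation and then invoke \cite[Theorem~2]{Tes}, which provides the QI-invariance of the relevant isoperimetric profile, for the precise estimates.
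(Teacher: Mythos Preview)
Your proposal is correct. The paper itself gives no proof of this lemma: it simply records that the statement is \cite[Theorem~2]{Tes} and places a \qed. Your treatment of (b) ends up in the same place, outlining the passage to a bounded-geometry net and then invoking Tessera's isoperimetric-profile theorem, so there is no discrepancy there.

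For (a) you actually supply more than the paper does: a short self-contained argument. It is correct. The step ``metric amenability $\Rightarrow$ unimodularity'' via $S=\{e,s\}$ and $\lambda(Fs)\le(1+\eps)\lambda(F)$ is valid regardless of which convention one uses for the modular function, since the constant $c(s)$ with $\lambda(Fs)=c(s)\lambda(F)$ is independent of $F$ and satisfies $c(s)c(s^{-1})=1$; your squeeze then gives $c\equiv 1$. The inversion trick for the remaining equivalence is fine once you use that in a unimodular group the Haar measure is inversion-invariant. One cosmetic point: make explicit that $F$ depends on $\eps$ while the modular constant does not, so the limit $\eps\to 0$ is legitimate.
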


Let us emphasize that being amenable is not a quasi-isometry invariant, in view of the cocompact inclusion $\R\rtimes\R\subset\SL_2(\R)$. The problem asking which amenable CGLC-groups are quasi-isometric to non-amenable ones is a very challenging one, it will be addressed in the context of hyperbolic LC-groups in \S\ref{qia}.

\subsection{Cayley-Abels graph}
For a compactly generated locally compact group, the Cayley graph with respect to some compact generating subset is often convenient, but has the drawback, when $G$ is not discrete, to have infinite valency and in addition the action of $G$ on its Cayley graph is not continuous.

\begin{defn}
A {\em Cayley-Abels graph} for $G$ is a continuous, proper and cocompact action of $G$ on a nonempty finite valency connected graph.
\end{defn}

Of course, if $G$ admits a Cayley-Abels graph, then it admits an open compact subgroup, namely the stabilizer of some vertex. The converse is the following elementary fact due to Abels \cite[Beispiel 5.2]{Abe} (see \cite[\S 11.3]{Mo}).

\begin{prop}[Abels]\label{tdg}
Let $G$ be a locally compact group with a compact open subgroup (i.e., $G$ is compact-by-(totally disconnected)). Then $G$ admits a Cayley-Abels graph, which can be chosen to be vertex-transitive.\qed
\end{prop}

% Then $G$ admits a continuous, proper and cocompact action on a finite valency connected graph if and only if $G$ has a compact open subgroup and is compactly generated; if so, the action can be chosen to be vertex-transitive.

\subsection{Types of hyperbolic groups}\label{tyh}

Gromov \cite[\S 3.1]{Gro87} splits isometric group actions on hyperbolic spaces into 5 types: bounded, horocyclic, lineal, focal, and general type, see \cite[\S 3]{CCMT}, from which we borrow the terminology. When specifying this to the action of a CGLC-group $G$ on itself (or any continuous proper cocompact isometric action of $G$) we get four out of these five types, the first 2 of which are called elementary and the last 2 are called non-elementary:
\begin{itemize}
\item $\partial G$ is empty, $G$ is compact;
\item $\partial G$ has 2 elements, $G$ admits $\Z$ as a cocompact lattice (see Corollary \ref{2ended} for more characterizations)
\item $\partial G$ is uncountable and has a $G$-fixed point: $G$ is called a focal hyperbolic group;
\item $\partial G$ is uncountable and the $G$-action is minimal: $G$ is called a hyperbolic group of general type.
\end{itemize}

Among hyperbolic LC-groups, focal groups can be characterized as those that are amenable and non-unimodular, and general type groups can be characterized as those that are not amenable. Most hyperbolic LC-groups of general type (e.g., discrete ones) do not admit cocompact amenable subgroups, the exceptions being listed in Theorem \ref{ccmti}.

%Those general type groups referred to in Theorem \ref{ccmti} are the exception among general type hyperbolic groups, insofar as most general type groups (e.g.\ discrete non-elementary hyperbolic groups) have no amenable cocompact subgroup. 

Focal hyperbolic groups soon disappeared from the literature after \cite{Gro87}, because the focus was made on proper actions of discrete groups, for which the applications were the most striking\footnote{There was a semantic shift in the meaning of ``elementary", when the terminology from post-Gromov papers, which was only fit for proper actions of discrete groups, was borrowed instead of referring to the general setting duly considered by Gromov.}. Except in the connected or totally disconnected case (and with another point of view), they reappear in \cite{CoTe}, before they were given a structural characterization in \cite{CCMT}.

%\footnote{The focus was first laid, in the post-Gromov papers, on proper actions of discrete groups, where the focal case does not occur. This 

%this specific case was regeneralized in post-post-Gromov papers.}

\subsection{Boundary}

Let $X$ be a proper geodesic hyperbolic space and $\partial X$ its boundary. Then $\overline{X}=X\cup\partial X$ has natural compact topology, for which $X$ is open and dense.

If $X$ and $Y$ are proper geodesic hyperbolic spaces, every quasi-isometric embedding $f:X\to Y$ has a unique extension $\hat{f}:\overline{X}\to\overline{Y}$ that is continuous on $\partial X$. This extension $\hat{f}$ maps $\partial X$ into $\partial Y$ and is functorial in $f$. Let $\bar{f}:\partial X\to\partial Y$ denote the restriction of $\hat{f}$.
Then $\bar{f}=\bar{g}$ whenever $f$ and $g$ are at bounded distance; in particular, every for every quasi-isometry, $\bar{f}$ is a homeomorphism $\partial X\to\partial Y$ whose inverse is $\bar{g}$, where $g$ is any inverse quasi-isometry for $f$.

The boundary carries a so-called visual metric. For such metrics, the homeomorphic embedding $\bar{f}$ above is a {\em quasi-symmetric} embedding in the sense that there exists an increasing function $F:[0,\infty\mathclose[\to [0,\infty\mathclose[$ such that
\[\frac{d(f(x),f(y))}{d(f(y),f(z))}\le F\left(\frac{d(x,y)}{d(y,z)}\right),\quad\forall x\neq y\neq z\in X.\]

\subsection{Focal hyperbolic groups}\label{fh}\label{fhco}\label{fhtd}\label{fhmi}

We say that an automorphism $\alpha$ of a locally compact group $N$ is {\em compacting} (or is a {\em compaction}) if there exists a compact subset $K$ of $N$ such that $\alpha(K)\subset K$ and $\bigcup_{n\ge 0}\alpha^{-n}(K)=N$; we say that an action of $\Z$ or $\R$ by automorphisms $(\alpha^n)_{n\in\Z}$ or $(\alpha^t)_{t\in\R}$ on $N$ is {\em compacting} if $\alpha^1$ is compacting.

%\subsubsection{Focal hyperbolic groups of connected type}\label{fhco}
\begin{center}{\bf Focal hyperbolic groups of connected type}\end{center}\vspace{-0.1cm}

Recall that Lie groups are not assumed connected and thus include discrete groups; a locally compact group is by definition compact-by-Lie if it has a compact normal subgroup so that the quotient is Lie. This is equivalent to (connected-by-compact)-by-discrete.

We say that a focal hyperbolic group is of {\em connected type} if it is compact-by-Lie. The following proposition is contained in \cite[Theorem 7.3]{CCMT}.

\begin{prop}\label{foco}Let $G$ be a focal hyperbolic LC-group. Equivalences:
\begin{itemize}
\item $G$ is of connected type;
\item the kernel of its modular function is connected-by-compact;
%\item its boundary is a sphere;
\item it admits a continuous, proper cocompact isometric action on a homogeneous simply connected negatively curved manifold of dimension $\ge 2$.
\item $G$ has a maximal compact normal subgroup $W$ and $G/W$ is isomorphic to a semidirect product $N\rtimes\Z$ or $N\rtimes\R$, where $N$ is a virtually connected Lie group and the action of $\Z$ or $\R$ is compacting.\qed
\end{itemize}
\end{prop}

%\begin{lem}\label{heip} Let $G$ be a focal hyperbolic LC-group of connected type. Then there exists a purely real Heintze group $H$, a negatively curved simply connected Riemannian manifold $X$, and continuous isometric proper transitive actions of $G$ and $H$ on $X$, the action of $H$ being simply transitive. In particular, $G$ is quasi-isometric to a purely real Heintze group.\end{lem}
%\begin{proof}By definition, $G$ has a continuous, proper cocompact isometric action on a homogeneous simply connected negatively curved manifold $X$ of dimension $\ge 2$. By \cite[Proposition 1]{Hein}, there exists a simply transitive simply connected solvable group $H_1$ in $H_2=\Isom(X)$, which is a virtually connected Lie group. The adjoint action of $H_2$ on its Lie algebra $\mathfrak{h}_2$ defines the homomorphism $H_2\to\GL(\mathfrak{h}_2)$ which is injective and is easily seen to be proper in restriction to the closed cocompact subgroup $H_1$, and therefore is proper. A straightforward adaptation of the proof of \cite[Lemma 3.4]{CO8} thus shows that there exist cocompact closed embeddings $H_2\subset H_3\supset H$ (found inside $\GL(\mathfrak{h}_2)$) with $H$ a purely real Heintze group. Then $H_3$ also admits a continuous proper transitive isometric action on a simply connected negatively curved manifold $Y$, on which both $G$ (through its homomorphism into $\Isom(X)$) and $H$ act, the action of $H$ being simply transitive.\end{proof}

%\subsubsection{Focal hyperbolic groups of totally disconnected type}\label{fhtd}
\begin{center}{\bf Focal hyperbolic groups of totally disconnected type}\end{center}\vspace{-0.1cm}

Similarly we say that a focal hyperbolic group is of {\em totally disconnected type} if its identity component is compact. From \cite[Theorem 7.3]{CCMT} we can also extract the following proposition.

\begin{prop}\label{fotd}Let $G$ be a focal hyperbolic LC-group. Equivalences:
\begin{itemize}
\item $G$ is of totally disconnected type;
\item the kernel of its modular function has a compact identity component;
%\item its boundary is totally disconnected;
\item it admits a continuous, proper cocompact isometric action on a regular tree of finite valency greater than~2.
\item it is isomorphic to a strictly ascending HNN-extension over a compact group endowed with an injective continuous endomorphism with open image.
\end{itemize}
\end{prop}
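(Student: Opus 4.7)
The plan is to establish the cycle (i) $\Rightarrow$ (ii) $\Rightarrow$ (iv) $\Rightarrow$ (iii) $\Rightarrow$ (i), drawing on Theorem~\ref{ccmti}(\ref{c3}) as the main structural input. That result provides a maximal compact normal subgroup $W\triangleleft G$ and an isomorphism $G/W \simeq N \rtimes T$ with $T\in\{\Z,\R\}$ acting on the noncompact LC-group $N$ by compacting automorphisms. Write $\pi\colon G\to G/W$. Since $W$ and $N$ are unimodular while a compacting automorphism has nontrivial modulus, the modular function $\Delta$ is trivial on $W$ and $N$ and injective on $T$; hence $\ker\Delta=\pi^{-1}(N)$ and $G/\ker\Delta\simeq T$.

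The implication (i) $\Rightarrow$ (ii) is immediate since $(\ker\Delta)^\circ\subseteq G^\circ$. For the crucial step (ii) $\Rightarrow$ (iv), the standard fact that quotients by compact normal subgroups preserve identity components gives $\pi((\ker\Delta)^\circ)=N^\circ$, so $N^\circ\cong(\ker\Delta)^\circ/(W\cap(\ker\Delta)^\circ)$ is compact under (ii). I would then rule out $T=\R$: the identity component $N^\circ$ is characteristic in $N$ and hence $\R$-invariant, and the induced $\R$-action on the totally disconnected group $N/N^\circ$ is trivial because every continuous orbit map from the connected group $\R$ into a totally disconnected space is constant. But if $N^\circ$ is compact, projecting the compacting data $tKt^{-1}\subseteq K$, $\bigcup_n t^{-n}Kt^n=N$ through $N\to N/N^\circ$ collapses the union to a single compact set, forcing $N/N^\circ$ (and hence $N$) compact, a contradiction. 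So $T=\Z$. The compact characteristic subgroup $N^\circ\triangleleft N$ is then compact normal in $G/W\simeq N\rtimes\Z$; its $\pi$-preimage is compact normal in $G$ and hence contained in $W$ by maximality, forcing $N^\circ=1$. Thus $N$ is totally disconnected, and a standard verification shows that a compacting automorphism $\phi$ of a noncompact totally disconnected group admits a compact open subgroup $U\subseteq N$ with $\phi(U)\subsetneq U$ open of finite index $k\ge 2$ and $N=\bigcup_{n\ge 0}\phi^{-n}(U)$, so $G/W=N\rtimes_\phi\Z$ is naturally the strictly ascending HNN-extension over $U$ with edge monomorphism $\phi$. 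Pulling back, $\tilde U:=\pi^{-1}(U)$ is compact open in $G$, and for any lift $\tilde t\in G$ of the stable letter, $\tilde\phi\colon x\mapsto\tilde t x\tilde t^{-1}$ is a continuous injective endomorphism of $\tilde U$ with open image of index $k$, realizing $G$ as the desired HNN-extension.

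The implication (iv) $\Rightarrow$ (iii) follows from Bass--Serre theory: the Bass--Serre tree of such an HNN-extension is a $(k+1)$-regular tree on which $G$ acts vertex-transitively with compact open vertex (and edge) stabilizers; this action is continuous, proper and cocompact, and its valency exceeds $2$ because $k\ge 2$. Finally, (iii) $\Rightarrow$ (i) is immediate: a vertex stabilizer in such an action is a compact open subgroup of $G$, so $G$ is compact-by-(totally disconnected) and $G^\circ$ is compact.

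The main obstacle I anticipate is the dichotomy step ruling out $T=\R$ under (ii); it rests on the principle that continuous actions of the connected group $\R$ on a totally disconnected group are trivial, combined with showing that under that triviality the compacting property collapses to make $N/N^\circ$ compact. Once $T=\Z$ and $N$ totally disconnected are in hand, recognizing the strictly ascending HNN-structure and passing to the Bass--Serre tree are essentially formal.
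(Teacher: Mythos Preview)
The paper does not give a proof of this proposition; it merely states that it is ``extracted'' from \cite[Theorem~7.3]{CCMT}. Your proposal, by contrast, reconstructs an actual argument from the structural Theorem~\ref{ccmti}(\ref{c3}), and the cycle (i)$\Rightarrow$(ii)$\Rightarrow$(iv)$\Rightarrow$(iii)$\Rightarrow$(i) is sound. The key step ruling out $T=\R$ under~(ii) --- via triviality of continuous $\R$-actions on totally disconnected groups and the collapse of the compacting data modulo $N^\circ$ --- is correct and is indeed the substantive point.

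One small correction: Theorem~\ref{ccmti}(\ref{c3}) as stated in the paper asserts that $G$ \emph{itself} is isomorphic to $N\rtimes T$, not that $G/W$ is; there is no maximal compact normal $W$ in that statement. Your later appeal to maximality of $W$ (to force $N^\circ=1$) therefore does not follow as written. This is easily repaired: once you know $N^\circ$ is compact, simply use that $N$ then admits compact open subgroups (van Dantzig applied to $N/N^\circ$, pulled back), and run the HNN argument directly in $N$; alternatively, mod out by the compact normal subgroup $N^\circ$ first. Either way the conclusion stands.

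Your justification that $\ker\Delta=\pi^{-1}(N)$ is also slightly terse: the point that $N$ is unimodular deserves a line (any homomorphism $N\to\R_{>0}$ invariant under a compacting automorphism has bounded, hence trivial, image), but this is routine.
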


%\subsubsection{Focal hyperbolic groups of mixed type}\label{fhmi}
\begin{center}{\bf Focal hyperbolic groups of mixed type}\end{center}\vspace{-0.1cm}

Finally, we say that a focal hyperbolic group is of {\em mixed type} if it is of neither connected nor totally disconnected type. For instance, if $p$ is prime and $\lambda\in\mathopen]0,1\mathclose[$, then the semidirect product $(\R\times\mathbf{Q}_p)\rtimes\Z$, where the positive generator of $\Z$ acts by multiplication by $(\lambda,p)$ on the ring $\R\times\mathbf{Q}_p$. 

Specifying once again \cite[Theorem 7.3]{CCMT}, we obtain

\begin{prop}\label{fomix}Let $G$ be a focal hyperbolic LC-group. Equivalences:
\begin{itemize}
\item $G$ is of mixed type;
\item the kernel of its modular function is neither connected-by-compact, nor compact-by-(totally disconnected);
%\item its boundary is connected and not locally connected;
\item it admits a continuous, proper cocompact isometric action on a {\em pure millefeuille space} (see \S\ref{sec:mi}).
%$X[k]$, i.e.\ satisfying $\min(\dim(X),k)\ge 2$ 
\item it has a maximal compact normal subgroup $W$ such that $G/W$ has an open subgroup of finite index isomorphic to a semidirect product $(N_1\times N_2)\rtimes\Z$, where $\Z$ acts on $N_1\times N_2$ by compaction preserving the decomposition, $N_1$ is a connected Lie group, $N_2$ is totally disconnected, and $N_1$ and $N_2$ are both non-compact.
\end{itemize}
\end{prop}

As far as I know, it seems that focal groups of mixed type (including examples) were not considered 
before \cite{CoTe}.

\subsection{Millefeuille spaces and amenable hyperbolic groups}\label{sec:mi}
We here recall the definition of millefeuille spaces.

%, which will be used in the proof of Proposition \ref{concon}.

Given a metric space $X$, define a Busemann function $X\to\R$ as a limit, uniform on bounded subsets of $X$, of functions of the form $x\mapsto d(x,x_0)+c_0$ for $x_0\in X$ and $c_0\in\R$. By Busemann metric space, we mean a metric space $(X,b)$ endowed with a Busemann function; a shift-isometry of $(X,b)$ is an isometry $f$ of $X$ preserving $b$ up to adding constants, i.e.\ such that $x\mapsto b(f(x))-b(x)$ is constant. A homogeneous Busemann metric space means a Busemann metric space with a transitive group of shift-isometries.

Let $(X,b)$ be a complete CAT($\kappa$) Busemann space ($-\infty\le\kappa\le 0$). For $k$ a non-negative integer, let $T_k$ be a $(k+1)$-regular tree (identified with its 1-skeleton), endowed with a Busemann function denoted by $b'$ (taking integer values on vertices). Note that the Busemann space $(T_k,b')$ is uniquely determined by $k$ up to combinatorial shift-isometry. The millefeuille space $X[b,k]$, introduced in \cite[\S 7]{CCMT}, is by definition the topological space
\[\{(x,y)\in X\times T_k\mid b(x)=b'(y)\}.\]
Call {\em vertical geodesic} in $T_k$, a geodesic in restriction to which $b'$ is 
an isometry. Call {\em vertical leaf} in $X[b,k]$ a (closed) subset of the form $X[b,k]\cap (X\times V)$ where $V$ is a vertical geodesic. 
In \cite[\S 7]{CCMT}, it is observed that there is a canonical geodesic distance, defining the topology, and such that in restriction to any vertical leaf, the canonical projection to $X$ is an isometry. Moreover $X[b,k]$ is CAT($\kappa$), and is naturally a Busemann space, the Busemann function mapping $(x,y)$ to $b(x)=b'(y)$. Note that $X[b,0]=X$.

We have the following elementary well-known lemma
\begin{lem}\label{chca}
Let $X$ be a homogeneous connected negatively curved Riemannian manifold of dimension $\ge 2$. Then exactly one of the following holds:
\begin{enumerate}[(a)]
\item $\Isom(X)$ fixes a unique point in $\partial X$.
\item $X$ is a rank~1 symmetric space of noncompact type; in particular $\Isom(X)$ is transitive on $\partial X$. 
\end{enumerate}
In particular, $\Isom(X)$ has a unique closed orbit on $\partial X$, which is either a singleton or the whole $\partial X$.
\end{lem}
%(Note that if $X$ is not homeomorphic to the circle, the assumption ``simply connected" is redundant, keeping in mind that Riemannian manifolds are implicitly assumed connected.)
\begin{proof}
We use that $G=\Isom(X)$ is hyperbolic and the action of $G$ on $X$ is quasi-isometrically conjugate to the left action of $G$ on itself. If $G$ is amenable, it is focal hyperbolic and thus fixes a unique point $\omega_X$ on the boundary and $G$ is transitive on $X\smallsetminus \{\omega_X\}$ (see for instance \cite[Proposition 5.5(c)]{CCMT}), thus $\{\omega_X\}$ is the unique closed $G$-orbit. 

Otherwise, it is hyperbolic of general type and virtually connected, 
and thus, by a simple argument (see \cite[Proposition 5.10]{CCMT}), is isomorphic to an open subgroup in $\Isom(Y)$ for some rank 1 symmetric space $Y$ of noncompact type. Let $K\subset G$ be the stabilizer in $G$ of one point $x_0\in X$, by transitivity we have $X\simeq G/K$ as $G$-spaces. Since $X$ is CAT(-1) (up to homothety) and complete, $K$ is a maximal compact subgroup of $G$. Thus $K$ is also the stabilizer of one point in $Y$, and the identifications $X\simeq G/K\simeq Y$ then exchange $G$-invariant Riemannian metrics on $X$ and those on $Y$. Since on $Y$, $G$-invariant Riemannian metrics are unique up to scalar multiplication and are symmetric, this is also true on $X$. So $X$ is symmetric and in particular $\Isom(X)$ is transitive on $\partial X$.
\end{proof}

\begin{defn}
Let $X$ be a homogeneous simply connected negatively curved Riemannian manifold. A {\em distinguished boundary point} is a point in the closed $\Isom(X)$-orbit in $\partial X$ (see Lemma \ref{chca}).
A {\em distinguished Busemann function} is a Busemann function attached to a distinguished boundary point.
\end{defn}

\begin{lem}\label{disbu}
Let $X$ be a homogeneous simply connected negatively curved Riemannian manifold. For any two distinguished Busemann functions $b,b'$ on $X$, there exists an isometry from $(X,b)$ to $(X,b')$, i.e., there exists an isometry $f:X\to X'$ such that $b=b'\circ f$.
\end{lem}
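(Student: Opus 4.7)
The plan is to split the proof into two independent steps. First, move $\xi$ to $\xi'$ by an isometry so as to reduce to the case where both Busemann functions are attached to the same distinguished boundary point; second, absorb the resulting additive constant by an isometry that fixes that boundary point. Concretely, letting $\xi,\xi'$ denote the distinguished boundary points at which $b,b'$ are based, Lemma~\ref{chca} tells us that the set of distinguished points is a single $\Isom(X)$-orbit (either $\{\omega\}$ or all of $\partial X$), so I can choose $g\in\Isom(X)$ with $g\xi=\xi'$. The pull-back $b'\circ g$ is then a Busemann function attached to $g^{-1}\xi'=\xi$: the defining limit $b'(y)=\lim_t(d(y,\gamma(t))-t)$ along a ray $\gamma$ to $\xi'$ becomes, after substituting $y=g(x)$, a limit along the ray $g^{-1}\gamma$ to $\xi$. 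Since $X$ is negatively curved, hence CAT($0$), two Busemann functions attached to the same boundary point differ by an additive constant, so $b=b'\circ g+c$ for some $c\in\R$.

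The second step is to produce $h\in\Isom(X)$ fixing $\xi'$ and satisfying $b'\circ h=b'+c$; setting $f:=h\circ g$ will then give $b'\circ f=(b'+c)\circ g=b'\circ g+c=b$, which is the conclusion. The existence of such an $h$ is purely a consequence of transitivity of $\Isom(X)_{\xi'}$ on $X$: pick any $y_0\in X$ and any $y_1$ on the horosphere $\{b'=b'(y_0)+c\}$, and let $h$ send $y_0$ to $y_1$. Because $h$ fixes $\xi'$, the pull-back $b'\circ h$ is again a Busemann function at $\xi'$, hence differs from $b'$ by a constant; evaluation at $y_0$ forces this constant to be $c$.

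It remains to verify that $\Isom(X)_{\xi'}$ acts transitively on $X$, and this is where the dichotomy of Lemma~\ref{chca} is used. In case (a), $\Isom(X)$ fixes the unique point $\xi'=\omega$ globally, so $\Isom(X)_{\xi'}=\Isom(X)$, which is transitive on $X$ by the standing homogeneity hypothesis. In case (b), for $X$ a rank one symmetric space of non-compact type, the Iwasawa decomposition $G=KAN$ applied to $G=\Isom(X)^\circ$ shows that $AN\subseteq\Isom(X)_{\xi'}$ already acts simply transitively on $X=G/K$; the degenerate subcases $X=\{*\}$ and $X=\R$ are immediate (the translation subgroup of $\Isom(\R)$ fixes each end and is transitive). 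I foresee no real obstacle: each ingredient is either classical or already distilled into Lemma~\ref{chca}, and the main point requiring a modicum of care is the bookkeeping asserting that $b'\circ g$ is genuinely a Busemann function based at $\xi$, which is immediate from the limit definition.
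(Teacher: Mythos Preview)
Your proof is correct and follows essentially the same two-step strategy as the paper: first use that distinguished boundary points form a single $\Isom(X)$-orbit to reduce to Busemann functions at the same point, then use transitivity of the stabilizer $\Isom(X)_{\xi'}$ on $X$ to absorb the additive constant. The paper's version is terser, simply asserting that the stabilizer of the distinguished point is transitive on $X$ and hence on Busemann functions at that point, whereas you spell out the construction of $h$ and justify the transitivity separately in the symmetric-space case via Iwasawa; but the underlying argument is the same.
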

\begin{proof}
Let $\omega$ and $\omega'$ be the distinguished points associated to $b$ and $b'$. Since all distinguished points are in the same $\Isom(X)$-orbit, we can push $b'$ forward by a suitable isometry and assume $\omega'=\omega$. Since the stabilizer in $\Isom(X)$ of $\omega$ is transitive on $X$, it is transitive on the set of Busemann functions attached to $\omega$. Thus there exists $f$ as required.
\end{proof}

Lemma \ref{disbu} allows to rather write $X[k]$ with no reference to any Busemann function, whenever $X$ is a homogeneous simply connected negatively curved Riemannian manifold.

The relevance of these spaces is due to the following theorem from \cite[\S 7]{CCMT}
\begin{thm}Let $G$ be a noncompact LC-group. Equivalences:
\begin{itemize}
\item $G$ is amenable hyperbolic;
\item for some integer $k\ge 1$ and some homogeneous simply connected negatively curved Riemannian manifold of positive dimension $d$, the group $G$ admits a continuous, proper and cocompact action by isometries on the millefeuille space $X[k]$, fixing a point (or maybe a pair of points if $(k,d)=(1,1)$) on the boundary.
\end{itemize}\label{ccmt_mil}
\end{thm}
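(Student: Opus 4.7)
The plan is to verify the two implications separately, using Theorem \ref{ccmti} for the forward direction and the CAT geometry of millefeuille spaces recalled in Section \ref{sec:mi} for the reverse.

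For $(\Leftarrow)$, I would first note that by the construction of Section \ref{sec:mi}, the millefeuille space $X[k]$ is a proper geodesic CAT$(\kappa)$-space with $\kappa<0$ when $\dim X\ge 2$ (and $X[k]$ is a regular tree when $d=1$); in either case it is a proper geodesic Gromov-hyperbolic space. A continuous, proper, cocompact isometric action on such a space forces $G$ to be hyperbolic by the definition recalled in Section \ref{s_qi}, and noncompact because $X[k]$ is unbounded. Since the $G$-action fixes a boundary point (or a pair), the induced action on $\partial G$ has a fixed point (or pair); cocompactness precludes the horocyclic type, leaving only the lineal and focal types, both of which are amenable by the discussion in Section \ref{tyh}.

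For $(\Rightarrow)$, apply Theorem \ref{ccmti}: noncompactness rules out case (a) and amenability rules out case (d), so $G$ is in case (b) or (c). In case (b), let $W$ be the maximal compact normal subgroup; then $G/W$ embeds cocompactly in $\Isom(\R)$. Setting $X=\R$ and $k=1$, inspection of the definition gives $X[1]\simeq\R$ as a Busemann space, and the composition $G\to G/W\hookrightarrow\Isom(\R)=\Isom(X[1])$ is a continuous, proper, cocompact isometric action fixing the pair of ends, realizing the exceptional $(k,d)=(1,1)$ alternative.

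Case (c) is the substantive step and the expected main obstacle. Here $G=N\rtimes A$ with $A\in\{\Z,\R\}$ compacting $N$. The plan is to use the structure theory of compacted locally compact groups to produce a characteristic closed $A$-stable subgroup $M\trianglelefteq N$ with $N/M$ a simply connected nilpotent Lie group compacted by $A$ and $M$ totally disconnected and compacted by $A$; to assemble $(N/M)\rtimes\R$ into a Heintze group $H$ whose left-invariant negatively curved Riemannian metric realizes it as a homogeneous negatively curved manifold $X$ of dimension $d=\dim(N/M)+1$, carrying a distinguished Busemann function $b$ coming from the $\R$-factor; and to build a $(k+1)$-regular tree $T_k$ on which $M\rtimes A$ acts cocompactly, with $A$ translating along a vertical geodesic and $k+1$ the appropriate local valency. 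After rescaling the tree metric so that $A$ acts with the same translation length on both factors, the product action of $G$ on $X\times T_k$ restricts to a continuous, proper, cocompact isometric action on the codimension-one sub-Busemann subset $X[k]=\{(x,y):b(x)=b'(y)\}$, fixing the distinguished boundary point on $\partial X\subset\partial X[k]$. The main technical obstacle is this compatibility/alignment step, which together with the verification of properness and cocompactness on the sub-Busemann subset is carried out in detail in \cite[\S 7]{CCMT}.
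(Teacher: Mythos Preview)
The paper does not prove this theorem: it is stated as a quotation from \cite[\S 7]{CCMT}, with no argument given here. So there is no ``paper's own proof'' to compare against; your proposal is really a sketch of the CCMT argument, and should be read as such.

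As a sketch, your outline is essentially the right shape and matches what is implicit in the surrounding material (Propositions \ref{foco}, \ref{fotd}, \ref{fomix}). Two small points deserve tightening. First, in case (c), the structural input you need is not quite an extension $M\trianglelefteq N$ with $N/M$ a simply connected nilpotent Lie group; rather (see Proposition \ref{fomix}) one passes to a compact quotient and an open finite-index subgroup to obtain a \emph{direct product} decomposition $N_1\times N_2$ with $N_1$ connected Lie and $N_2$ totally disconnected, both compacted by $\Z$. Your extension picture follows from this, but is not the primitive statement. Second, when $A=\Z$ you do not yet have a one-parameter contraction on $N_1$; one must realize $N_1\rtimes\Z$ as a cocompact subgroup of a genuine Heintze group $N_1\rtimes\R$ before invoking the negatively curved manifold $X$. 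Both issues are indeed handled in \cite[\S 7]{CCMT}, which you cite for the technical work, so your deferral is appropriate; just be aware that these are the places where the details live.
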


Let us now describe the topology of the boundary of $\partial X[k]$.

\begin{prop}\label{bdmi}
Let $X$ be homogeneous negatively curved $d$-dimensional Riemannian manifold ($d\ge 1$) endowed with a surjective Busemann function and $k\ge 1$. Then $\partial X[k]$ is homeomorphic to the one-point compactification of
\begin{itemize}
\item $\R^{d-1}$ if $k=1$ and $d\ge 2$;
\item $\R^{d-1}\times\Z\times C$ if $k\ge 2$, where $C$ is a Cantor space.
\end{itemize}
In particular, its topological dimension is $d-1$.
\end{prop}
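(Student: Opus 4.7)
The plan is to decompose $\partial X[k]$ using the vertical-leaf foliation of the millefeuille. Each downward end $\xi' \in \partial T_k \smallsetminus \{\omega'\}$ determines a unique bi-infinite vertical geodesic in $T_k$ (with ends $\omega'$ and $\xi'$), and hence a vertical leaf $V_{\xi'} \subset X[k]$ isometric to $X$ under the projection to $X$. I would first prove that every boundary point $\xi \neq \omega_{X[k]}$ lies in $\partial V_{\xi'}$ for a unique $\xi'$. For the geodesic line $\ell$ from $\omega_{X[k]}$ to $\xi$, the Busemann function $\tilde b$ restricts to an affine function on $\ell$, so the tree-projection $p_T \circ \ell$ is a continuous path in $T_k$ along which $b'$ is linear; in a tree this forces $p_T \circ \ell$ to be a bi-infinite vertical geodesic from $\omega'$ to some $\xi'$, whence $\ell \subset V_{\xi'}$. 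Since each $\partial V_{\xi'}$ is a copy of $\partial X \simeq S^{d-1}$ meeting every other $\partial V_{\xi''}$ only at $\omega_{X[k]}$, this yields the set-level bijection
\[\partial X[k] \smallsetminus \{\omega_{X[k]}\} \;\longleftrightarrow\; (\partial T_k \smallsetminus \{\omega'\}) \times \R^{d-1}.\]

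Next, I would identify the tree factor. For $k = 1$, the tree $T_1$ is a bi-infinite line, $\partial T_1 \smallsetminus \{\omega'\}$ reduces to a point, and the claim follows immediately. For $k \geq 2$, $\partial T_k$ is a Cantor set; writing $\partial T_k \smallsetminus \{\omega'\} = \bigsqcup_{n \geq 0} C_n$, where $C_n$ is the clopen compact set of downward ends whose geodesic from a basepoint $y_0$ coincides with the ray to $\omega'$ for exactly its first $n$ steps, each $C_n$ is the boundary of a finite disjoint union of rooted $k$-ary subtrees and hence a Cantor set. Thus $\partial T_k \smallsetminus \{\omega'\} \simeq \Z \times C$.

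The main obstacle is to upgrade the set-bijection to a homeomorphism and to verify that $\omega_{X[k]}$ plays the role of the point at infinity in the one-point compactification of $(\partial T_k \smallsetminus \{\omega'\}) \times \R^{d-1}$. For this I would use the visual topology on $\partial X[k]$: a sequence satisfies $\xi_n \to \omega_{X[k]}$ iff the Gromov product $(\xi_n \mid \omega_{X[k]})_{(x_0, y_0)}$ diverges, and by the CAT($\kappa$) geometry of the millefeuille from \cite[\S 7]{CCMT} this is equivalent (up to bounded error) to the $\tilde b$-value of the bifurcation point between the two geodesic rays tending to $-\infty$. With $\xi_n \leftrightarrow (\xi'_n, \eta_n)$, that bifurcation $\tilde b$-value is the minimum of the tree-bifurcation $b'$-value of $\xi'_n$ with the ray to $\omega'$ and the $X$-bifurcation $b$-value of $\eta_n$ with the vertical ray to $\omega_X$ inside $V_{\xi'_n}$; it therefore tends to $-\infty$ iff either $\xi'_n \to \omega'$ in $\partial T_k$ or $\eta_n \to \omega_X$ in $\partial X$, which is precisely convergence in the one-point compactification of the product. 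Finally, $\Z \times C$ is $0$-dimensional (being totally disconnected and metrizable), so $\R^{d-1} \times \Z \times C$ has covering dimension $d - 1$; since the one-point compactification preserves covering dimension for locally compact separable metric spaces, $\dim \partial X[k] = d - 1$.
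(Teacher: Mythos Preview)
Your approach is correct and is essentially the ``more geometric proof'' to which the paper alludes but does not carry out. The paper's own argument is quite different and much shorter: it exploits homogeneity by exhibiting a group $G=(N_1\times N_2)\rtimes\Z$ acting properly cocompactly on $X[k]$, where $N_1$ is a simply connected nilpotent Lie group of dimension $d-1$ (coming from a Heintze presentation of $X$) and $N_2$ is a totally disconnected group (from the tree). One then invokes the fact, from \cite[Proposition~5.5]{CCMT}, that $N_1\times N_2$ acts simply transitively on $\partial X[k]\smallsetminus\{\omega\}$, whence this punctured boundary is homeomorphic to $N_1\times N_2\simeq\R^{d-1}\times N_2$, and $\partial X[k]$ is its one-point compactification. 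The identification of $N_2$ with a point (if $k=1$) or with $\Z\times C$ (if $k\ge 2$) is then immediate from the structure of compacted totally disconnected groups.

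The trade-off is clear: the paper's proof is a two-line application of the group structure and avoids any direct analysis of geodesics or Gromov products, but it uses homogeneity in an essential way. Your leaf-by-leaf decomposition and bifurcation analysis is longer but would go through for a general CAT($\kappa$) space $X$ ($\kappa<0$) with a surjective Busemann function, with no transitivity assumption. Two small points on your write-up: the clause ``either $\xi'_n\to\omega'$ or $\eta_n\to\omega_X$'' is not literally equivalent to $\min$ of the two bifurcation heights tending to $-\infty$ (consider alternating subsequences), though the correct statement---that $(\xi'_n,\eta_n)$ eventually leaves every compact subset of the product---is exactly what you need and does follow from your minimum formula; and you only treat continuity of the bijection at $\omega_{X[k]}$, so you should add a line handling convergence $\xi_n\to\xi\neq\omega_{X[k]}$ (the same bifurcation argument, now between the rays to $\xi_n$ and to $\xi$, shows the product coordinates converge), or simply note that a continuous bijection between compact Hausdorff spaces is a homeomorphism once continuity is checked at every point.
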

Note that when $d=1$ and $k\ge 2$, then $X[k]$ is a $(k+1)$-regular tree and  $\partial X[k]$ is homeomorphic to a Cantor space.
%\begin{itemize}
%\item I\item If $d\ge 2$ and $k=1$, then $X[k]$ is a $(d-1)$-sphere%\item If $(d,k)=(1,1)$, $X[k]$ is the real line and $\partial X[k]$ is the 0-sphere (a pair of points).\end{itemize}

\begin{proof}[Proof of Proposition \ref{bdmi}]
We can pick a group $G_1=N_1\rtimes\Z$ acting properly cocompactly on $X$, shifting the Busemann function by integer values, such that the action of $\Z$ contracts the simply connected nilpotent $(d-1)$-dimensional Lie group $N_1$, and $G_2=N_2\rtimes\Z$ acting continuously transitively on the $(k+1)$-regular tree, where $\Z$ contracts $N_2$. 
Then $G=(N_1\times N_2)\rtimes\Z$ acts properly cocompactly on $X[k]$, and the action of $N_1\times N_2$ on $\partial X[k]\smallsetminus\{\omega\}$ is simply transitive (see the proof of \cite[Proposition 5.5]{CCMT}). So $X[k]\smallsetminus\{\omega\}$ is homeomorphic to $N_1\times N_2$ and thus $X[k]$ is its one-point compactification. (This proof makes use of homogeneity for the sake of briefness, but the reader can find a more geometric proof in a more general context.)
\end{proof}

\begin{cor}\label{corxkt}
The classification of the spaces $\partial X[k]$ up to homeomorphy is given by the following classes
\begin{itemize}
\item $k=1$, $d\ge 1$ is fixed: $\partial X[k]$ is a $(d-1)$-sphere;
\item $k\ge 2$ is not fixed, $d\ge 1$ is fixed: $\partial X[k]$ is a ``Cantor bunch" of $d$-spheres.
\end{itemize}
\end{cor}
\begin{proof}
The homeomorphy type of the space $\partial X[k]$ detects $d$, since $d-1$ is its topological dimension. Moreover, it detects whether $k=1$ or $k\ge 2$, because $\partial X[k]$ is locally connected in the first case and not in the second.
\end{proof}

\begin{cor}\label{fixmi}
Let $\omega$ be the distinguished point in $\partial X[k]$ (the origin of its distinguished Busemann function). Consider the action of $\Homeo(\partial X[k])$ on $\partial X[k]$. Then
\begin{itemize}
\item if $\min(k,d)=1$ then this action is transitive;
\item if $\min(k,d)\ge 2$ then this action has 2 orbits, namely the singleton $\{\omega\}$ and its complement $\partial X[k]\smallsetminus\{\omega\}$.
\end{itemize}
\end{cor}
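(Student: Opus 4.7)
The proof splits according to the value of $\min(k,d)$, invoking Proposition~\ref{bdmi}'s description of $\partial X[k]$ as a one-point compactification and analyzing its self-homeomorphisms.

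If $\min(k,d)=1$, then $\partial X[k]$ is either a $(d-1)$-sphere (when $k=1$, including the pair of points $d=k=1$) or a Cantor space (when $d=1$ and $k\ge 2$), and in either case it is classical that $\Homeo(\partial X[k])$ acts transitively, so there is nothing more to do.

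If $\min(k,d)\ge 2$, the plan has two steps: first, show that $\omega$ is topologically distinguished; second, exhibit explicit homeomorphisms of $\partial X[k]$ acting transitively on the complement. For the first step, I would prove that $\omega$ is the unique point at which $\partial X[k]$ is locally connected. At $\omega$, basic neighborhoods of the form $\{\omega\}\cup(Y\smallsetminus(\overline{B_R}\times F\times C))$, where $Y=\R^{d-1}\times\Z\times C$, $F\subset\Z$ is finite, and $B_R\subset\R^{d-1}$ is a ball, are path-connected: each leaf $\R^{d-1}\times\{n\}\times\{c\}$ either contributes entirely or contributes its complement of a closed ball, and in both sub-cases, adjoining $\omega$ produces a path-connected set (for $d\ge 3$ because the leaf minus a ball is connected, and already for $d=2$ because the two resulting half-lines both accumulate at $\omega$). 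For $p\ne\omega$, on the other hand, any small basic neighborhood $U\times\{n\}\times V$ has connected components $U\times\{n\}\times\{c\}$ (since $V\subset C$ is totally disconnected), uncountably many, and none of which is a neighborhood of $p$; thus $\partial X[k]$ is not locally connected at $p$. Every self-homeomorphism must therefore fix $\omega$.

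For the transitivity on $\partial X[k]\smallsetminus\{\omega\}$, I would use the product group $\R^{d-1}\times\Z\times\Homeo(C)$ acting componentwise on $Y$; each such map is a product of proper homeomorphisms (translations of $\R^{d-1}$ and $\Z$ are proper, and $C$ is compact), hence proper, hence extends to a self-homeomorphism of $\partial X[k]$ fixing $\omega$. Since each factor acts transitively on its respective factor (transitivity of $\Homeo(C)$ on the Cantor set being classical), the product acts transitively on $Y$, completing the proof. The only mildly subtle step is the path-connectedness verification at $\omega$ in the case $d=2$ discussed above; everything else is essentially bookkeeping about the product structure coming from Proposition~\ref{bdmi}.
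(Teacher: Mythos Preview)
Your proof is correct and follows the same overall architecture as the paper's: handle $\min(k,d)=1$ by recognizing the boundary as a sphere or Cantor set, and for $\min(k,d)\ge 2$ show separately that $\omega$ is topologically distinguished and that the complement is homogeneous. The genuine difference is in the invariant used to pin down $\omega$. The paper observes that $\omega$ is the unique \emph{cut-point} of $\partial X[k]$: removing $\omega$ disconnects the space (it becomes $\R^{d-1}\times\Z\times C$, which has more than one component once $k\ge 2$), whereas removing any other point leaves a connected space (each leaf, possibly punctured, is joined to every other through $\omega$). You instead argue that $\omega$ is the unique point of \emph{local connectedness}, which requires the more delicate path-connectedness check of basic neighborhoods at $\omega$ (including the $d=2$ case you flagged). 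Both invariants do the job; the cut-point argument is a one-line global observation, while yours is a local analysis that trades brevity for a somewhat finer statement. For transitivity on the complement, the paper simply invokes the group $N_1\times N_2$ from the proof of Proposition~\ref{bdmi} acting simply transitively, which is the same mechanism as your product action $\R^{d-1}\times\Z\times\Homeo(C)$, stated more succinctly.
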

\begin{proof}
The space $\partial X[k]\smallsetminus\{\omega\}$ is homogeneous under its self-homeomorphisms, and thus any transitive group of self-homeomorphisms extends to the one-point compactification. It remains to discuss whether $\omega$ belongs to the same orbit.
\begin{itemize}
\item If $\min(k,d)=1$, then $\partial X[k]$ is a Cantor space or a sphere and thus is homogeneous under its group of self-homeomorphisms.
\item If $\min(k,d)\ge 2$, and $x\in\partial X[k]$, then $X[k]\smallsetminus\{x\}$ is connected if and only if $x\neq\omega$. In other words, $\omega$ is the only cut-point in $\partial X[k]$ and thus is fixed by all self-homeomorphisms.\qedhere
\end{itemize}
\end{proof}

\begin{cor}\label{foc3}Among focal hyperbolic LC-groups, the three classes of focal groups of connected, totally disconnected and mixed type are closed under quasi-isometry.\end{cor}
\begin{proof}If $G$ is a focal hyperbolic LC-group, it is of connected type if and only if its boundary is locally connected, and of totally disconnected type if and only if its boundary is totally disconnected.
\end{proof}

%They correspond to $k=1$, $d=1$, and $\min(k,d)\ge 2$. We also directly see the corollary observing they correspond to the discussion whether the boundary is a sphere, a Cantor set or none.

\begin{remark}Another natural topological description of the visual boundary $\bd X[k]$ is that it is homeomorphic to the smash product $\partial X\wedge\partial T$ of the boundary of $X$ and the boundary of $T$, and is thus homeomorphic to the $(d-1)$-fold reduced suspension of $\partial T$, where $d=\dim(X)$. Actually, this allows to describe the whole compactification $\overline{X[k]}=X[k]\cup\partial X[k]$ as the smash product of $\overline{T}$ and a $(d-1)$-sphere.
\end{remark}

\begin{remark}
The dimension $\dim(X)$ can also be characterized as the asymptotic dimension of $X[k]$, giving another proof that it is a QI-invariant of $X[k]$.
\end{remark}

\begin{figure}[!t]
\includegraphics[scale=0.4]{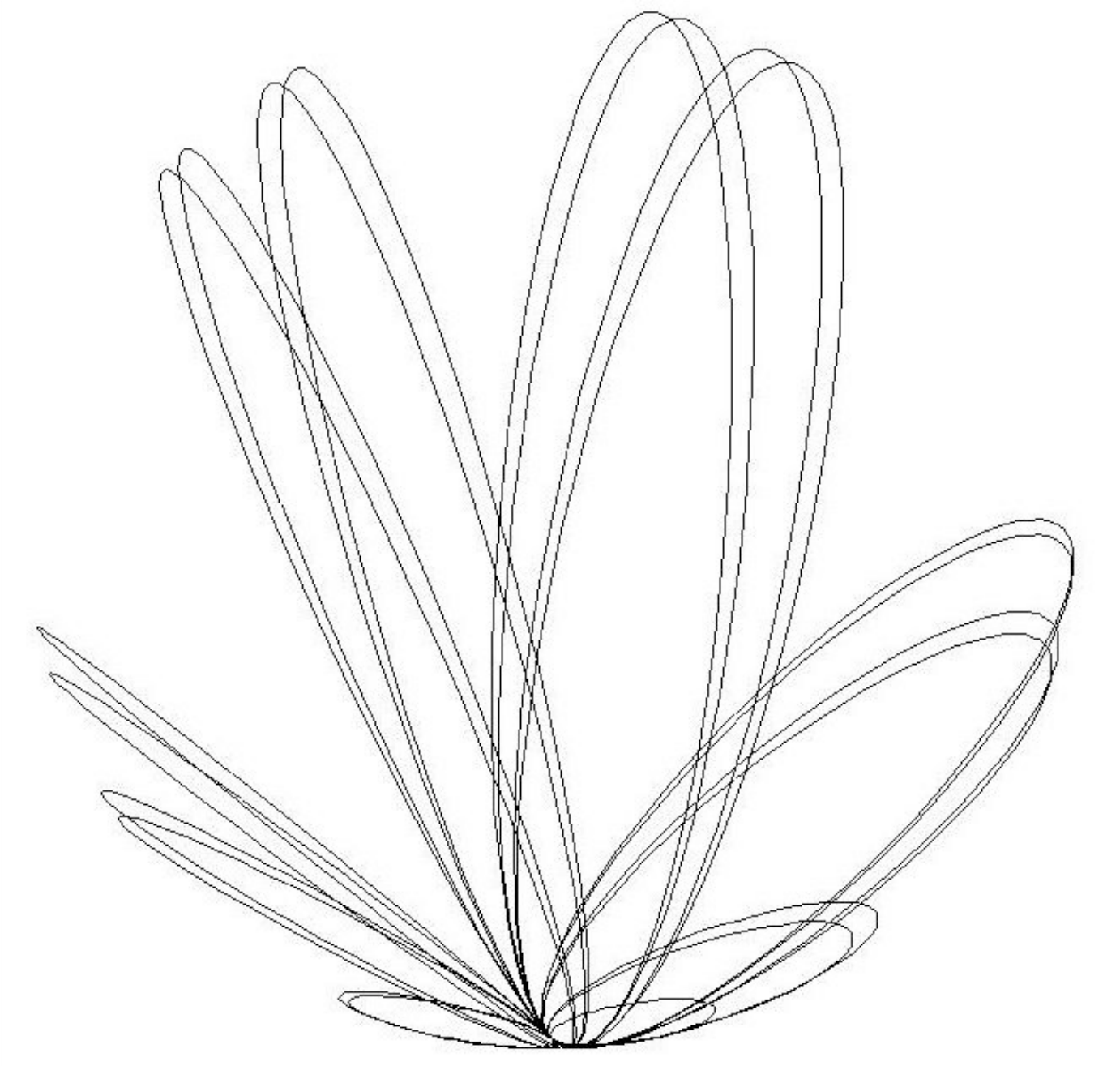}
\caption{The boundary of $\mathbf{H}^2_\R[2]$. Except at the singular point, it is locally modeled on the product of a dyadic Cantor space and a line.}
\label{figg}
\end{figure}

Let us call {\em pure millefeuille space} a millefeuille space $X[k]$ such that $\min(\dim X,k)\ge 2$, i.e.\ that is neither a manifold nor a tree; equivalently, its isometry group is focal of mixed type. 

%A pure millefeuille space (or equivalently a focal hyperbolic LC-group of mixed type)

\begin{prop}\label{mifi}
A hyperbolic LC-group $G$ is focal of mixed type if and only if $\partial G$ contains a point fixed by every self-homeomorphism of $\partial G$.
\end{prop}
\begin{proof} 
The ``only if" part follows from Corollary \ref{fixmi}

Conversely, assume that the condition is satisfied. Then $G$ cannot be of general type. The condition also implies that the boundary is nonempty and hence rules out compact groups. Otherwise $G$ is 2-ended or focal. But if $G$ is 2-ended, or focal of connected or totally disconnected type, its boundary is a nonempty sphere or a Cantor space, which have at least 2 elements and transitive homeomorphism groups.
\end{proof}

%, every self-homeomorphism of the boundary fixes the distinguished point $\omega$.

\begin{cor}\label{milnqi}
A focal hyperbolic LC-group of mixed type (or equivalently a pure millefeuille space) is not quasi-isometric to any hyperbolic LC-group of general type. In particular, it is not quasi-isometric to any vertex-transitive connected graph of finite valency.
\end{cor}
\begin{proof}
The first statement is a particular case of Proposition \ref{mifi}. For the second statement, observe that the isometry group of a such a vertex-transitive graph would be focal of totally disconnected type or of general type (by Lemma \ref{amqi}), but we have just excluded the general type case and the focal case is excluded by Corollary \ref{foc3}.
\end{proof}

\subsection{Carnot groups}\label{scarnot}

Recall that a {\em Heintze group} is a Lie group of the form $N\rtimes\R$, where $N$ is a simply connected nilpotent Lie group, and the action of positive reals contracts $N$. It is {\em purely real} if it only has real eigenvalues in the adjoint representation, or equivalently in the action of $\R$ on the Lie algebra of $N$.

\begin{defn}Let us say that a purely real Heintze group is of {\em Carnot type} if it admits a semidirect decomposition $N\rtimes\R$ such that, denoting by $(N^i)_{i\ge 1}$ the descending central series, it satisfies one of the three equivalent conditions
\begin{enumerate}
\item\label{ca1} the action of $\R$ on $N/[N,N]$ is scalar;
\item\label{ca2} the action of $\R$ on $N^i/N^{i+1}$ is scalar for all $i$;
\item\label{ca3} there is a linear decomposition of the Lie algebra $\mathfrak{n}=\bigoplus_{j=1}^\infty\mathfrak{v}_j$ such that, for some $\lambda\in\R\smallsetminus\{0\}$, the action of every $t\in\R$ on $\mathfrak{v}_j$ is given by multiplication by $\exp(j\lambda t)$ for all $j\ge 1$ and such that $\bigoplus_{j\ge i}\mathfrak{v}_j=\mathfrak{n}^i$.
\end{enumerate}
\label{d_carnot}\end{defn}

We need to justify the equivalence between the definitions. The trivial implications are (\ref{ca3})$\Rightarrow$(\ref{ca2})$\Rightarrow$(\ref{ca1}). To get (\ref{ca2}) from (\ref{ca1}), observe that as a module (for the action of the one-parameter subgroup), $\mathfrak{n}^i/\mathfrak{n}^{i+1}$ is a quotient of $(\mathfrak{n}/[\mathfrak{n},\mathfrak{n}])^{\otimes i}$, so if the action on $\mathfrak{n}/[\mathfrak{n},\mathfrak{n}]$ is scalar then so is the action on $\mathfrak{n}^i/\mathfrak{n}^{i+1}$. To get (\ref{ca3}) from (\ref{ca2}), use a characteristic decomposition of the action of the one-parameter subgroup.

\begin{remark}\label{grada}
If a purely real Heintze group $G=N\rtimes\R$ is of Carnot type then $N$ is a Carnot gradable\footnote{In the literature, ``Carnot gradable" is usually referred to as ``graded" but this terminology is in practice a source of confusion, inasmuch as nilpotent Lie algebras may admit other relevant gradings.} nilpotent group, in the sense that its Lie algebra admits a Carnot grading, i.e.\ a Lie algebra grading $\mathfrak{n}=\bigoplus_{i\ge 1}\mathfrak{v}_i$ such that $\mathfrak{n}^j=\bigoplus_{i\ge j}\mathfrak{v}_i$ for all $j$. The converse is not true: for instance most purely real Heintze groups of the form $\R^2\rtimes\R$ are not of Carnot type. Nevertheless, if $N$ is Carnot gradable, then up to isomorphism it defines a unique purely real Heintze group of Carnot type $\Carn(N)$ of the form $N\rtimes\R$. This is because a simply connected nilpotent Lie group $N$ is gradable if and only if its Lie algebra $\mathfrak{n}$ is isomorphic to the graded Lie algebra $\textnormal{grad}(\mathfrak{n})$ defined as $\bigoplus_{i\ge 1}\mathfrak{n}^i/\mathfrak{n}^{i+1}$ (where the bracket is uniquely defined by factoring the usual bracket $\mathfrak{n}^i\times\mathfrak{n^j}\to\mathfrak{n}^{i+j}$), so that if $\mathfrak{n}$ is Carnot gradable then any two Carnot gradings define the same graded Lie algebra up to graded isomorphism. 
\end{remark}

%\subsection{Standard rank 1 spaces}\begin{lem}Let $G$ be an LC-group. Assume that for some rank 1 symmetric space $Y$ of noncompact type, there exists a continuous proper homomorphism $G\to\Isom(Y)$ with open (i.e.\ finite index) image. Then for every proper CAT(0) Gromov-hyperbolic space $X$ with a continuous proper cocompact action of $G$, there a closed convex $G$-invariant subset of $X$ homothetic to $Y$.\end{lem}
%\begin{proof}Let $D=(g_t)_{t\in\R}$ be a 1-parameter subgroup of $G$ consisting of hyperbolic isometries of $Y$. For every $r$, the closed subset $F_r=\{x\in X: d(x,u_1(x))\le r\}$ is $D$-invariant, and hence is either empty or quasi-isometric to a line. Also, because $X$ is CAT(0), it is convex. Let $B$ be a closed ball such that for some $r_0$, $F_{r_0}\smallsetminus B$ has 2 unbounded components. Then by connectedness of $F_r$, for every $r$ such that $F_r\neq\emptyset$, we have $F_r\cap B\neq\emptyset$. Therefore by compactness, the set of $r$ such that $F_r$ is nonempty has a minimum, which is a positive number $\rho$. That is, $u_1$ is a hyperbolic isometry of $X$ in the CAT(0) sense. Let $x$ be an element in $F_\rho$ and let $Z=Gx$ be its orbit. Then $Dx$ is a geodesic of $X$ contained in $Z$. Since So the action of $D$ on $F_\rho$ is . 
%\end{proof}

\section{Quasi-isometric rigidity of symmetric spaces}\label{s_ss}
	%\numberwithin{prop}{section} 

\subsection{The QI-rigidity statement}

The general QI-rigidity statement for symmetric spaces of noncompact type is the following. Recall that a Riemannian symmetric space of noncompact type has a canonical decomposition as a product of irreducible factors; we call the metric {\em well-normalized} if all homothetic irreducible factors are isometric; this can always be ensured by a suitable factor-wise rescaling (for instance, requiring that the infimum of the sectional curvature on each factor is $-1$). 

\begin{thm}\label{cqi}
Let $X$ be a symmetric space of noncompact type with a well-normalized metric. Let $G$ be a compactly generated locally compact group, quasi-isometric to $X$. Then $G$ has a continuous, proper cocompact action by isometries on $X$.
\end{thm}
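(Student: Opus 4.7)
The plan is to reduce Theorem~\ref{cqi} to the existing QI-rigidity statements for symmetric spaces via a standard quasi-action argument, and then to extract an honest continuous isometric action in the LC setting.

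Let $f\colon G\to X$ be a quasi-isometry with quasi-inverse $f'$. For each $g\in G$, the map $\Phi_g\colon x\mapsto f(gf'(x))$ is a self-quasi-isometry of $X$; passing to classes modulo bounded distance yields a group homomorphism $\rho\colon G\to \mathrm{QI}(X)$, where $\mathrm{QI}(X)$ is the group of self-quasi-isometries of $X$ modulo~$\sim$. The fact that left translations on $G$ compose and are transported by $f$ up to bounded error makes $\rho$ a genuine homomorphism; the QI-constants of $\Phi_g$ can moreover be chosen uniform as $g$ ranges over any compact subset of $G$.

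The crucial input is the identification of $\mathrm{QI}(X)$, or of its induced action on $\partial X$, with $\Isom(X)$. By the QI-invariance of the de Rham decomposition of symmetric spaces of noncompact type (Kleiner-Leeb, and Eskin-Farb in the reducible case), one may reduce to $X$ irreducible. When $\mathrm{rank}(X)\ge 2$, Kleiner-Leeb give $\mathrm{QI}(X)=\Isom(X)$. When $X$ is rank one of dimension $\ge 3$, the same identification is due to Tukia (real hyperbolic), R.~Chow (complex hyperbolic), and Pansu (quaternionic and octonionic hyperbolic), all via analysis of quasi-conformal maps of $\partial X$ and the use of the Carnot metric there. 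The case $X=\mathbf{H}^2_\R$ is exceptional, since $\mathrm{QI}(X)$ is much larger than $\Isom(X)$; here one instead invokes the convergence group theorem of Casson-Jungreis and Gabai, which asserts that any uniform convergence group action on $S^1$ is topologically conjugate to a cocompact Fuchsian action, so that the induced boundary action of $\rho(G)$ on $\partial X=S^1$ extends to an isometric action on $\mathbf{H}^2_\R$.

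It remains to promote $\rho\colon G\to\Isom(X)$ to a continuous, proper, cocompact isometric action. Cocompactness and properness of the action follow by transporting the regular left action of $G$ on itself through the quasi-isometry $f$ and using that point stabilizers in $\Isom(X)$ are compact. Continuity of $\rho$ comes from a rigidity property of $\Isom(X)$: two isometries agreeing up to small error on a large enough ball are themselves close, so the joint continuity of $(g,x)\mapsto \Phi_g(x)$ on compact subsets of $G\times X$, together with the uniform QI-constants mentioned above, transfers to continuity of $g\mapsto\rho(g)$. The main obstacle will be the $\mathbf{H}^2_\R$ case: the input there is of a different nature from the direct $\mathrm{QI}=\Isom$ identifications, and one must check that the topological conjugation produced by the convergence group theorem interacts compatibly with the LC topology on $G$, so as to give an honest continuous homomorphism into $\Isom(\mathbf{H}^2_\R)$ rather than a merely abstract one.
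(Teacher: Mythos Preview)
Your overall architecture --- produce a cobounded quasi-action of $G$ on $X$ from the quasi-isometry, then convert it to an isometric action --- matches the paper's. But there is a substantive error and a genuine gap.

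\textbf{The error.} The identification $\mathrm{QI}(X)=\Isom(X)$ holds for irreducible $X$ of higher rank (Kleiner--Leeb) and for the quaternionic and octonionic hyperbolic spaces (Pansu), but it is \emph{false} for real hyperbolic spaces of all dimensions and for complex hyperbolic spaces: the quasiconformal groups of their boundaries are vastly larger than the M\"obius groups. What Tukia (real case) and Chow (complex case) actually prove is that a \emph{uniform} quasiconformal group on $\partial X$ is quasiconformally conjugate to a subgroup of $\Isom(X)$; this is a statement about group actions, not about the individual elements of $\mathrm{QI}(X)$. So $\mathbf{H}^2_\R$ is not the only exceptional case in your scheme --- $\mathbf{H}^n_\R$ ($n\ge 3$) and $\mathbf{H}^n_\C$ require the same kind of conjugacy input. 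Relatedly, you only claim uniform QI-constants for $\Phi_g$ over compact subsets of $G$, but left translations are isometries of $(G,d_0)$, so the constants are uniform over all of $G$; this is exactly the uniformity hypothesis needed to feed into Tukia/Chow. The paper avoids the case analysis entirely by invoking \cite[Theorem~1.5]{kl09}, which synthesizes all ranks into a single statement: any ULSL quasi-action on $X$ is quasi-conjugate to an isometric action.

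\textbf{The gap.} Your continuity argument does not go through as written. The map $(g,x)\mapsto \Phi_g(x)=f(gf'(x))$ need not be continuous in either variable, since $f$ and $f'$ are only quasi-isometries; so there is no ``joint continuity on compact sets'' to transfer. The paper's route (Proposition~\ref{bbcon}) is different: having obtained an abstract homomorphism $\varphi\colon G\to\Isom(X)$ with cobounded, proper, locally bounded action, one shows that any accumulation point $w$ of $\varphi(\gamma)$ as $\gamma\to 1$ has bounded displacement, hence $w=1$ because $X$ is CAT(0) with no Euclidean factor. This yields continuity without ever asserting continuity of the quasi-isometry. You would need an argument of this type (or an equivalent one) rather than the rigidity-on-balls heuristic.
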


This statement is mostly known in a weaker form, where $G$ is assumed to be discrete and sometimes in an even much weaker form, where one allows to pass to a finite index subgroup. Still, it is part of a stronger result concerning arbitrary cocompact large-scale quasi-actions on $X$ of arbitrary groups (with no properness assumption) which, up to a minor continuity issue, is due to Kleiner and Leeb; see \S\ref{qapt}.

Before going into the proof, let us indicate some corollaries of Theorem \ref{cqi}. They illustrate the interest of having a statement for CGLC-groups instead of only finitely generated groups, even when studying discrete objects such as vertex-transitive graphs.

\begin{cor}\label{corgr}
Let $X$ be a symmetric space of noncompact type with a well-normalized metric. Let $Y$ be a vertex-transitive, finite valency connected graph. Assume that $Y$ is quasi-isometric to $X$. Then there exists a cocompact lattice $\Gamma\subset\Isom(X)$ and an extension $W\mono\tilde{\Gamma}\epi\Gamma$ with $W$ compact, such that $\tilde{\Gamma}$ admits a proper vertex-transitive action on $Y$; moreover there exists a connected graph $Z$ on which $\Gamma$ acts properly and transitively, with a surjective finite-to-one 1-Lipschitz equivariant quasi-isometry $Y\to Z$.
\end{cor}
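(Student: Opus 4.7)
The plan is to apply Theorem \ref{cqi} directly to the full automorphism group $G := \Aut(Y)$, equipped with the pointwise convergence topology on $V(Y)$. Since $Y$ is a connected graph of finite valency, the stabilizer in $G$ of any vertex is profinite (it acts faithfully on each finite ball around the vertex and is thus an inverse limit of finite groups), hence compact and open. Combined with vertex-transitivity of $Y$, this yields a continuous, proper, transitive $G$-action on $Y$; in particular, $G$ is a CGLC-group, and the standard Milnor--Schwarz argument shows $G$ is quasi-isometric to $Y$, and hence to $X$. Theorem \ref{cqi} then furnishes a continuous proper homomorphism $\phi: G \to \Isom(X)$ with closed cocompact image. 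Set $W := \ker(\phi)$, a compact normal subgroup of $G$ (by properness of $\phi$), and $\Gamma := \phi(G) \cong G/W$.

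The crux of the argument is to show that $\Gamma$ is actually discrete in $\Isom(X)$, hence a uniform lattice. Since the compact open vertex stabilizers in $G$ are profinite and contain the identity component $G^{0}$, the group $G$ is totally disconnected; the quotient $G/W$ is therefore totally disconnected as well. But $\Isom(X)$ is a real Lie group, and every closed totally disconnected subgroup of a Lie group is discrete (by Cartan's closed subgroup theorem: the Lie algebra of the subgroup is trivial). Setting $\tilde\Gamma := G$, the extension $W \mono \tilde\Gamma \epi \Gamma$ has the required form, with $W$ compact and $\tilde\Gamma$ acting properly and transitively on $Y$ by construction. This total-disconnectedness-to-discreteness step is really the whole point of the argument, and is the place where running Theorem \ref{cqi} on $\Aut(Y)$ itself (rather than on some putative vertex-transitive discrete subgroup) pays off.

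The graph $Z$ is then obtained as the quotient $Z := W\backslash Y$: its vertex set is $W\backslash V(Y)$ (each $W$-orbit finite, since $W$ is compact acting on a discrete set), with distinct orbits $Wv, Wv'$ joined by an edge whenever some pair of lifts is adjacent in $Y$. Normality of $W$ in $\tilde\Gamma$ makes $\Gamma = \tilde\Gamma/W$ act on $Z$, transitively because $\tilde\Gamma$ acts transitively on $Y$, and properly because stabilizers in $\Gamma$ are quotients of compact-open subgroups by compact normal subgroups sitting inside a discrete group, hence finite. The natural projection $\pi: Y \to Z$ is surjective, finite-to-one, 1-Lipschitz, and $\tilde\Gamma$-equivariant (with $\tilde\Gamma$ acting on $Z$ via $\Gamma$); uniform boundedness of the $W$-orbits in $Y$ makes $\pi$ a quasi-isometry, with a coarse inverse furnished by any set-theoretic section. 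Beyond the discreteness step already highlighted, everything here is a formal quotient construction.
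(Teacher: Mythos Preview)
Your proof is correct and follows essentially the same route as the paper's: take $G=\Aut(Y)$, apply Theorem \ref{cqi} to obtain a continuous proper cocompact homomorphism $G\to\Isom(X)$, and then form $Z$ as the quotient of $Y$ by the (compact) kernel. The only cosmetic difference is in the discreteness step: the paper observes directly that a continuous homomorphism from a totally disconnected LC-group into a Lie group has \emph{open} kernel (no small subgroups on the target, van Dantzig on the source), whereas you pass through ``$G/W$ is totally disconnected, hence discrete as a closed subgroup of a Lie group''; these are two phrasings of the same fact.
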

\begin{proof}
Let $\tilde{\Gamma}$ be the automorphism group of $Y$; this is a totally disconnected LC-group; since $Y$ is vertex-transitive, it is compactly generated and quasi-isometric to $Y$ and  hence to $X$. By Theorem \ref{cqi}, there is a proper continuous homomorphism with cocompact image $\tilde{\Gamma}\to\Isom(X)$. Since $\Isom(X)$ is a Lie group, this homomorphism has an open kernel $K$, compact by properness. So the image is a cocompact lattice $\Gamma$.

To obtain $Z$, observe that the $K$-orbits in $Y$ have uniformly bounded diameter, so $Z$ is just obtained as the quotient of $Y$ by the $K$-action.
\end{proof}

\begin{cor}\label{corrm}
Let $X$ be a symmetric space of noncompact type with a well-normalized metric. Let $M$ be a proper, homogeneous geodesic metric space quasi-isometric to $X$. Then there exists a closed, connected cocompact subgroup $H$ of $\Isom(X)$ and an extension $W\mono \tilde{H}\epi H$ with $W$ compact, and a faithful proper isometric transitive action of $\tilde{H}$ on $M$. If moreover $M$ is contractible then $W=1$ and there is a diffeomorphism $M\to X$ intertwining the $H$-action on $M$ with the original $H$-action on $X$.
\end{cor}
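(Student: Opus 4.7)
The plan is to imitate the proof of Corollary \ref{corgr}, replacing the graph automorphism group by the full isometry group $G:=\Isom(M)$ equipped with the compact-open topology. Since $M$ is proper, $G$ is locally compact, and since $M$ is homogeneous proper geodesic, $G$ acts continuously, properly, and transitively on $M$, and hence is compactly generated and quasi-isometric to $M$, therefore to $X$. Theorem \ref{cqi} supplies a continuous proper cocompact isometric homomorphism $\rho\colon G\to\Isom(X)$. By properness $W_0:=\ker\rho$ is compact, and $H_1:=\rho(G)$ is a closed cocompact subgroup of $\Isom(X)$. As $\Isom(X)$ is a real semisimple Lie group with finitely many connected components, the identity component $H:=H_1^{0}$ is still closed, cocompact, and of finite index in $H_1$. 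Set $\tilde H:=\rho^{-1}(H)$, an open finite-index subgroup of $G$, and $W:=W_0\subset\tilde H$; then $W\mono\tilde H\epi H$ is the announced compact extension.

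Because $\tilde H$ is open in the transitive group $G$, its orbits in $M$ are open, and hence also closed (as complements of unions of other open orbits). Since $M$ is connected (being geodesic), $\tilde H$ acts transitively on $M$. The action inherited from $\Isom(M)$ is manifestly faithful, proper and isometric, which gives the first assertion of the corollary.

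Assume now that $M$ is contractible, and let $\tilde K\subset\tilde H$ be the stabilizer of a base point $x_0\in M$, so that $\tilde K$ is compact and $M=\tilde H/\tilde K$ topologically. Since $H$ is a connected Lie group and $W$ is compact, $\tilde H$ is almost connected; by the Cartan--Iwasawa--Malcev theorem it has a maximal compact subgroup, unique up to conjugation, whose coset space is homeomorphic to Euclidean space. A standard fibration argument then shows that any compact subgroup of $\tilde H$ with contractible coset space must itself be maximal compact; in particular $\tilde K$ is maximal compact in $\tilde H$. Any compact normal subgroup is contained in every maximal compact subgroup, so $W\subset\tilde K$; by normality we get $W\subset g\tilde Kg^{-1}$ for every $g\in\tilde H$, i.e., $W$ fixes every point of $M$. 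Faithfulness of the action then forces $W=1$, so that $\tilde H\cong H$.

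To finish, I would invoke the classical fact that a closed connected cocompact subgroup $H$ of the semisimple Lie group $\Isom(X)$ acts transitively on $X$ itself (equivalently, $\Isom(X)=KH$ for any maximal compact $K\subset\Isom(X)$, via a Mostow-type argument on the compact homogeneous space $\Isom(X)/H$). The stabilizer of a base point of $X$ in $H$ is then a maximal compact subgroup $K'\subset H$, and so is $\tilde K$; after conjugating $\tilde K$ onto $K'$ inside $H$, the canonical map $H/\tilde K\to H/K'$ gives the desired $H$-equivariant homeomorphism $M\to X$, which becomes a diffeomorphism once $M$ is endowed with the smooth structure coming from the Lie-group quotient $H/\tilde K$. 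The two delicate points I expect to need genuine care are (i) the contractibility criterion characterizing maximal compact subgroups of almost connected locally compact groups, and (ii) the transitivity of $H$ on $X$; both are standard but nontrivial and should be proved or cited cleanly.
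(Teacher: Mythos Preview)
Your proof is correct and follows the same overall strategy as the paper: apply Theorem \ref{cqi} to the full isometry group of $M$, and in the contractible case use that the point stabilizer is maximal compact (the paper cites \cite{Anto} for exactly this) together with transitivity on $X$ of a closed connected cocompact subgroup of $\Isom(X)$. The two ``delicate points'' you flag are precisely the nontrivial inputs the paper relies on.

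There are two minor organizational differences. First, the paper takes $\tilde H$ to be the identity component of $\Isom(M)$, whereas you take $\tilde H=\rho^{-1}\big(\rho(\Isom(M))^{0}\big)$; your choice has the advantage that openness of $\tilde H$ in $\Isom(M)$ (hence transitivity on the connected space $M$) is immediate. Second, in the contractible case the paper constructs the equivariant map $j\colon M=\tilde H/K\to X$ directly by sending the basepoint $m_0$ to a $K$-fixed point $x_0\in X$ (using that $X$ is CAT(0)), proves $j$ is bijective via maximal compactness of $K$, and reads off $W=1$ afterwards; you instead deduce $W=1$ first from normality plus maximal compactness, and then compare two maximal compacts in $H$ to get the diffeomorphism. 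Both orderings work; the paper's fixed-point construction is slightly more direct than your conjugation step, but the content is the same.
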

\begin{proof}
The argument is similar to that of Corollary \ref{corgr}; here $\tilde{H}$ will be the identity component of the isometry group of $M$. This proves the first statement.

Fix a point $m_0\in M$ and let $K$ be its stabilizer in $\tilde{H}$. Then $K$ is a compact subgroup. The image of $K$ in $H$ fixes a point $x_0$ in $X$. So the homomorphism $\tilde{H}\to\Isom(X)$ induces a continuous map $j:M=\tilde{H}/K\to X$ mapping $m_0$ to $x_0$. Since $H$ is connected and closed cocompact, its action on $X$ is transitive; it follows that $j$ is surjective.

Under the additional assumption, let us check that $j$ is injective; by homogeneity, we have to check that $j^{-1}(\{x_0\})=\{m_0\}$. This amounts to check that if $g\in\tilde{H}$ fixes $x_0$ then it fixes $m_0$. Indeed, the stabilizer of $x_0$ in $\tilde{H}$ is a compact subgroup containing $K$; since $M$ is contractible, $K$ is maximal compact (see \cite{Anto}) and this implies the result. Necessarily $W=1$, because it fixes a point on $X$ and being normal it fixes all points, so also acts trivially on $M$; since $\tilde{H}$ acts by definition faithfully on $M$ it follows that $W=1$.
\end{proof}

\subsection{Quasi-actions and proof of Theorem \ref{cqi}}\label{qapt}
We use the large-scale language introduced in \S\ref{s_qii}.

The following powerful theorem is essentially due to Kleiner and Leeb \cite{kl09}, modulo the formulation, a continuity issue, and the fact we have equivariance instead of quasi-equivariance. It is stronger than Theorem \ref{cqi} because no properness assumption is required.

If $G$ is a locally compact group and acts, not necessarily continuously, by isometries on a metric space $X$, we say that the action is {\em locally bounded} if for some/every $x\in X$ and every compact subset $K\subset G$, the subset $Kx\subset X$ is bounded; in particular if the action is continuous, then it is locally bounded. We say that the action is {\em proper} if for every compact subset $B$ of $X$, the set $\{g\in G:\;gB\cap B\neq\emptyset\}$ has a compact closure in $G$. The action is {\em cobounded} if there exists $x\in X$ such that $\sup_{y\in X}d(y,Gx)<\infty$.

\begin{thm}\label{kl9}
Let $X$ be a symmetric space of noncompact type with a well-normalized metric (as defined before Theorem \ref{cqi}). Let $G$ be a locally compact group with a locally bounded and cobounded isometric action $\rho$ on a metric space $Y$ quasi-isometric to $X$. Then there exists an isometric action of $G$ on $X$ and a $G$-equivariant quasi-isometry $Y\to X$. Moreover, the latter action is necessarily continuous, cocompact, and if $\rho$ is proper then the action is proper.
\end{thm}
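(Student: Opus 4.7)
The plan is to produce the isometric action $\sigma\colon G\to\Isom(X)$ first, then upgrade the given quasi-isometry to a $G$-equivariant one, and finally read off the regularity and geometric properties of $\sigma$.

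First I would invoke the Kleiner--Leeb rigidity theorem (together with the rank-one contributions of Tukia, Pansu and Chow): every self quasi-isometry of $X$ lies at bounded distance from a unique isometry, and the defect is bounded by a function $\kappa(L,\alpha)$ depending only on the quasi-isometry constants. Uniqueness relies on the regular metric assumption (so $X$ has no Euclidean de Rham factor): any isometry close to $\mathrm{id}_X$ fixes $\partial X$ pointwise and is therefore trivial (immediate in rank one, using rigidity of the spherical building in higher rank). Fix any $(L,\alpha)$-quasi-isometry $f\colon Y\to X$ and coarse inverse $g\colon X\to Y$. For each $h\in G$, the conjugate $\phi_h=f\circ\rho(h)\circ g$ is a self quasi-isometry of $X$ whose constants depend only on those of $f,g$ --- \emph{not on $h$} --- because $\rho(h)$ is a genuine isometry of $Y$. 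Let $\sigma(h)$ be the unique isometry at distance $\le\kappa$ from $\phi_h$. The relation $\sigma(h_1h_2)=\sigma(h_1)\sigma(h_2)$ drops out of uniqueness (both sides lie at bounded distance from $\phi_{h_1h_2}$), yielding a homomorphism $\sigma\colon G\to\Isom(X)$.

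The main obstacle is upgrading $f$ to a $G$-equivariant quasi-isometry. The crucial point is that the defect
\[ d(f(\rho(h)y),\sigma(h)f(y))\le C \]
is bounded \emph{uniformly} in $h\in G$ and $y\in Y$, thanks to the $h$-uniform Kleiner--Leeb bound above. Using that $X$ is CAT($0$), I would set
\[ O_y\;=\;\{\sigma(h)^{-1}f(\rho(h)y):h\in G\}\;\subset\;X, \]
a subset of diameter $\le 2C$. The substitution $h\mapsto kh'^{-1}$ yields the equivariance $\sigma(h')O_y=O_{\rho(h')y}$. Defining $\tilde f(y)$ to be the canonical CAT($0$) circumcenter of $O_y$ produces a map $\tilde f\colon Y\to X$ at distance $\le C$ from $f$, hence a quasi-isometry; equivariance of the circumcenter under isometries promotes the set-level equivariance to $\tilde f(\rho(h')y)=\sigma(h')\tilde f(y)$. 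The delicate step here is really the uniformity of the defect, which rests on $\rho$ being a true isometric action (not a mere quasi-action) and on having a uniform version of the Kleiner--Leeb bound; this is the part I expect to require the most care.

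For the final properties, continuity of $\sigma$ follows from local boundedness of $\rho$: for a compact neighbourhood $U$ of $1\in G$ and any $y_0\in Y$, the set $\rho(U)y_0$ is bounded in $Y$, so by equivariance $\sigma(U)\tilde f(y_0)$ is bounded in $X$, forcing $\sigma(U)$ into a compact subset of the Lie group $\Isom(X)$; combined with Borel measurability of $\sigma$ (automatic from its construction), this gives continuity. Cocompactness of $\sigma(G)$ on $X$ follows from coarse density of $\tilde f(Y)$ in $X$: via equivariance, the orbit $\sigma(G)\tilde f(y_0)$ inherits coarse density from the coarse density of $\rho(G)y_0$ in $Y$ transported by $\tilde f$. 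Finally, if $\rho$ is coarsely proper then the orbit map $h\mapsto\rho(h)y_0$ is coarsely proper, so composing with $\tilde f$ shows $h\mapsto\sigma(h)\tilde f(y_0)$ is coarsely proper, which is equivalent to $\sigma$ being a proper action on $X$.
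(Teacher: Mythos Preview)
Your argument has a genuine gap at its very first step. The claim that ``every self quasi-isometry of $X$ lies at bounded distance from a unique isometry'' is \emph{false} for the real hyperbolic spaces $\mathbf{H}^n_\R$ and the complex hyperbolic spaces $\mathbf{H}^n_\C$. In those cases the boundary carries a wealth of quasi-conformal (resp.\ quasi-symmetric) self-maps that are not M\"obius, and each such map is the boundary trace of a self quasi-isometry of $X$ not at bounded distance from any isometry. The results of Tukia and Chow that you cite are \emph{not} statements about individual quasi-isometries; they say that a \emph{uniform, cobounded} group of quasi-conformal maps of the boundary sphere is quasi-conformally conjugate to a M\"obius group. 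So you cannot define $\sigma(h)$ one element at a time as ``the isometry close to $\phi_h$'': for real or complex hyperbolic factors no such isometry exists in general. Pansu's theorem does give what you want for the quaternionic and octonionic cases, and Kleiner--Leeb for irreducible higher rank, but the real and complex hyperbolic factors genuinely require a different mechanism.

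The paper avoids this by quoting \cite[Theorem~1.5]{kl09} in its quasi-action form: a ULSL quasi-action of a group on $X$ is quasi-conjugate to an isometric action. That theorem packages the rank-one ingredients (Tukia, Chow, Casson--Jungreis, Gabai) together with higher rank, and crucially treats the whole group at once rather than element by element. The paper then passes from quasi-equivariance to equivariance by the circumcenter trick (essentially your second step), and handles continuity not via a measurability argument but via Proposition~\ref{bbcon}, using that $X$ has no nontrivial isometry with bounded displacement. Your outline from the circumcenter step onward is broadly in line with the paper; but to repair the first step you must either restrict to products of higher-rank and quaternionic/octonionic factors, or replace the elementwise construction of $\sigma$ by an appeal to the quasi-action version of Kleiner--Leeb as the paper does. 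A minor additional point: your cocompactness argument assumes $\rho(G)y_0$ is coarsely dense in $Y$, i.e.\ that $\rho$ is cobounded, which is not part of the stated hypotheses.
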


For the continuity issue, we need the following lemma.

\begin{lem}\label{bbcon}
Let $H$ be a locally compact group with a continuous isometric action on a metric space $X$. Assume that the only element of $H$ acting on $X$ as an isometry with bounded displacement is the identity. (For instance, $X$ is a minimal proper CAT(0)-space with no Euclidean factor and $H$ is the full isometry group.) Let $G$ be a locally compact group with an action of $G$ on $X$ given by a locally bounded homomorphism $\varphi \colon G\to H$.
Assume that the action of $G$ on $X$ is cobounded. 
Then $\varphi$ (and hence the action of $G$ on $X$) is continuous.
\end{lem}
\begin{proof}
Let $w\in H$ be an accumulation point of $\varphi(\gamma)$ when $\gamma\to 1$.
Fix compact neighborhoods $\Omega_G,\Omega_H$ of 1 in $G$ and $H$. Fix $g\in G$. There exists a neighborhood $V_g$ of $1$ in $G$ such that 
$\gamma\in V_g$ implies $g^{-1}\gamma g\in\Omega_G$. 
Moreover, if we define
\[C_g=\{\gamma\in G:\varphi(\gamma)^{-1}w\in\varphi(g)\Omega_H\varphi(g)^{-1}\},\]
then 1 belongs to the closure of $C_g$.
  Hence $V_g\cap C_g\neq\emptyset$, so if we pick $\gamma$ in the intersection we get
\[\varphi(g)^{-1}\varphi(\gamma)\varphi(g)\in\varphi(\Omega_G);\quad \varphi(g)^{-1}\varphi(\gamma)^{-1}w\varphi(g)\in\Omega_H,\]
so we deduce, for all $g\in G$
\[\varphi(g)^{-1}w\varphi(g)\in\varphi(\Omega_G)\Omega_H,\]
Since the action is locally bounded, $\varphi(\Omega_G)$ is bounded. This shows that for a given $x\in X$, the distance $d(w\varphi(g)x,\varphi(g)x)$ is bounded independently of $g$. Since $G$ acts coboundedly, this shows that $w$ has bounded displacement, so $w=1$. Since $H$ is locally compact, this implies that $\varphi$ is continuous at 1 and hence is continuous.
\end{proof}

We need to deduce Theorem \ref{cqi} from Theorem \cite[Theorem 1.5]{kl09}. For this we need to introduce all the terminology of quasi-actions. We define a {\em quasi-action} of a group $G$ on a set $X$ as an arbitrary map $G\to X^X$, written as $g\mapsto (x\mapsto gx)$. If $x\in X$, let $i_x:G\to X$ be the orbital map $g\mapsto gx$.

If $G$ is a group and $X$ a metric space, a {\em uniformly large-scale Lipschitz} (ULSL) quasi-action of $G$ on $X$ is a map $\rho:G\to X^X$, such that for some $(\mu,\alpha)\in\R_{>0}\times\R$, every map $\rho(g)$, for $g\in G$ is a $(\mu,\alpha)$-Lipschitz map (in the sense defined in \S\ref{s_qii}), and satisfying $\rho(1)\stackrel{\alpha}\sim\textnormal{id}$ and $\rho(gh)\stackrel{\alpha}\sim\rho(g)\rho(h)$ for all $g,h\in G$. Note that for a ULSL quasi-action, all $i_x$ are pairwise $\sim$-equivalent.

The quasi-action is {\em cobounded} if the map $i_x$ has a cobounded image for every $x\in X$. If $G$ is a locally compact group, a quasi-action is {\em locally bounded} if $i_x$ maps compact subsets to bounded subsets for all $x$ (if $G$ is discrete this is an empty condition). The quasi-action is {\em coarsely proper} if for all $x\in X$, inverse images of bounded subsets by $i_x$ have compact closure. 

Given quasi-actions $\rho$ and $\rho'$ of $G$ on $X$ and $X'$, a map $q:X\to X'$ is {\em quasi-equivariant} if it satisfies
\[\sup_{g\in G,x\in X}d\big(\,q(\rho(g)x),\rho(g)q(x)\,\big)<\infty.\]
The ULSL quasi-actions are quasi-isometrically quasi-conjugate if there is a quasi-equivariant quasi-isometry $X\to X'$; this is an equivalence relation.

\begin{lem}\label{qadico}
Let $(X,d)$ be a metric space and $G$ a group. Then
\begin{enumerate}[(a)]
\item\label{rh2} If $Y$ is a metric space with an isometric action of $G$ and with a quasi-isometry $q:Y\to X$, then there is a ULSL quasi-action of $G$ on $X$ so that $q$ is a quasi-isometric quasi-conjugacy.
\item\label{rh} Conversely, if $\rho$ is a ULSL quasi-action of $G$ on $X$, then there exists a metric space $Y$ with an isometric $G$-action and a quasi-equivariant quasi-isometry $Y\to X$.
\end{enumerate}
\end{lem}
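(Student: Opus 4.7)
For (a) I will proceed by a direct construction: fix a quasi-inverse $\bar q\colon X\to Y$ of $q$ and set $\rho(g):=q\circ\ell_g\circ\bar q$, where $\ell_g\colon Y\to Y$ denotes left-translation by $g\in G$. Uniform large-scale Lipschitz constants for the family $\{\rho(g)\}_{g\in G}$ come from the large-scale Lipschitz bounds on $q,\bar q$ together with the fact that $\ell_g$ is an isometry. The axiom $\rho(1)\sim\mathrm{id}_X$ is just $q\circ\bar q\sim\mathrm{id}_X$; the composition law $\rho(gh)\sim\rho(g)\rho(h)$ and the quasi-equivariance of $q$ both reduce to $\bar q\circ q\sim\mathrm{id}_Y$ pushed through the Lipschitz map $q$. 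So (a) is essentially formal.

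Part (b) is the substantial half: one must manufacture an honest isometric action from a mere quasi-action. My plan is to put on $Y_0:=G\times X$ the ``worst-case orbital'' pseudometric
$$\tilde d\bigl((g,x),(h,y)\bigr)\ :=\ \sup_{k\in G}\,d_X\!\bigl(\rho(kg)(x),\,\rho(kh)(y)\bigr),$$
and take $Y$ to be its metric quotient. Finiteness follows from $\rho(kg)\stackrel{\alpha}\sim\rho(k)\rho(g)$ combined with the uniform $(\mu,\alpha)$-Lipschitz bound on $\rho(k)$: one gets $\tilde d\le\mu\,d_X(\rho(g)(x),\rho(h)(y))+3\alpha$. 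The left action $k\cdot(g,x):=(kg,x)$ preserves $\tilde d$ by a one-line reindexing of the supremum, and hence descends to an isometric $G$-action on $Y$.

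For the quasi-equivariant quasi-isometry I then take $q\colon [g,x]\mapsto\rho(g)(x)$, which is well-defined and $1$-Lipschitz (evaluate the sup at $k=1$), with candidate quasi-inverse $\bar q(x):=[1,x]$. The composition $q\circ\bar q$ is exactly $\rho(1)\sim\mathrm{id}_X$, while $\bar q\circ q\colon [g,x]\mapsto[1,\rho(g)(x)]$ lies within $\tilde d$-distance $\alpha$ of $[g,x]$ because $\rho(k)(\rho(g)(x))\stackrel{\alpha}\sim\rho(kg)(x)$ uniformly in $k$. Quasi-equivariance of $q$ is the same estimate.

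The only genuinely non-automatic step I anticipate is the triangle inequality for $\tilde d$: it follows from the pointwise-in-$k$ triangle inequality together with $\sup_k(f_k+g_k)\le\sup_k f_k+\sup_k g_k$, after inserting an arbitrary intermediate triple $(\ell,z)$ and noting that the choice of $\ell$ is immaterial by reparametrising the sup variable. This---together with the verification that the $G$-action descends to the metric quotient (automatic, since it preserves $\tilde d$)---is where I expect to spend the most care; everything else is a routine application of the ULSL axioms.
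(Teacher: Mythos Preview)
Your proposal is correct and follows essentially the same approach as the paper: the same conjugation formula for part~(a), and for part~(b) the same ``sup-over-$G$'' pseudometric on a product $G\times(\text{something})$, with the same orbital map as quasi-isometry. The only difference is that the paper first extracts a subset $Y\subset X$ of orbit representatives and works on $G\times Y$, proving by a short argument that $\rho(G)Y$ is cobounded; you instead work directly on $G\times X$, which is slightly cleaner since your quasi-inverse $\bar q(x)=[1,x]$ makes coboundedness of $q$ immediate via $q\circ\bar q=\rho(1)\stackrel{\alpha}{\sim}\mathrm{id}_X$.
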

\begin{proof}
We begin by the easier (\ref{rh2}). Let $s:X\to Y$ be a quasi-isometry inverse to $q$ and define a quasi-action of $G$ on $X$ by $\rho(g)x=q(g\cdot s(x))$; it is straightforward that this is a ULSL quasi-action and that $q$ is a quasi-isometric quasi-conjugacy.

For (\ref{rh}), first, let $Y\subset X$ be a maximal subset for the property that the $\rho(G)y$, for $y\in Y$, are pairwise disjoint. Let us show that $\rho(G)Y$ is cobounded in $X$. Indeed, denoting, by abuse of notation $a \approx b$ if $|a-b|$ is bounded by a constant depending only on the ULSL constants of $\rho$, if $x\in X$ by maximality there exists $y\in Y_0$ and $g\in G$ such that $\rho(g)x=\rho(h)y$. Then, denoting by $d_X$ the distance in $X$
\[d_X(x,\rho(g^{-1}h)y)\approx d_X(x,\rho(g^{-1})\rho(h)y)\approx d_X(\rho(g)x,\rho(h)y)=0, \]
so $\rho(G)Y$ is cobounded.

Consider the cartesian product $G\times Y$, whose elements we denote by $g\diamond y$ rather than $(g,y)$ for the sake of readability. Define a pseudometric on $G\times Y$ by 
\[d(g\diamond y,h\diamond z)=\sup_{k\in G}d_X(\rho(kg)y,\rho(kh)z)\]
If $G\times Y$ is endowed with the $G$-action $k\cdot(g\diamond y)=(kg\diamond y)$, then this pseudo-metric is obviously $G$-invariant. Consider the map $j:(g\diamond y)\mapsto \rho(g)y$. Let us check that $j$ is a quasi-equivariant quasi-isometry $(G\times Y,d)\to X$. We already know it has a cobounded image.
We obviously have
\begin{equation}\label{d0}d(g\diamond y,h\diamond z)\ge d_X(\rho(g)y,\rho (h)z)=d_X(j(g\diamond y),j(h\diamond z)),\end{equation}
and conversely, writing $a\preceq b$ if $a\le Cb+C$ where $C>0$ is a constant depending only of the ULSL constants of $\rho$
\begin{align*}d(g\diamond y,h\diamond z)= & \sup_{k\in G}d_X\big(\rho(kg)y,\rho(kh)z\big)\\\approx & \sup_{k\in G}d_X\big(\rho(k)\rho(g)y,\rho(k)\rho (h)z\big)\\
\preceq & d_X(\rho(g)y,\rho (h)z)=d_X(j(g\diamond y),j(h\diamond z)).
\end{align*}
This shows that $j$ is a large-scale bilipschitz embedding. Thus it is a quasi-isometry. It is also quasi-equivariant, as
\[d_X\big(j(h\cdot(g\diamond y)),\rho(h)j(g\diamond y)\big)=d_X\big(\rho(hg)y),\rho(h)\rho(g)y)\big)\approx 0.\]
So we obtain the conclusion, except that we have a pseudo-metric; if we identify points in $G\times Y$ at distance zero, the map $j$ factors through the quotient space, as follows from (\ref{d0}) and thus we are done.
\end{proof}

\begin{proof}[Proof of Theorem \ref{kl9}]
The statement of \cite[Theorem 1.5]{kl09} is the following: {\em Let $X$ be a symmetric space of noncompact type with a well-normalized metric. Let $G$ be a discrete group with a ULSL quasi-action $\rho$ on $X$. Then $\rho$ is QI quasi-conjugate to an isometric action.}

By Lemma \ref{qadico}, this can be translated into the following: {\em Let $X$ be as above and let $G$ be a discrete group with an isometric action $\rho$ on a metric space $Y$ quasi-isometric to $X$. Then there exists an isometric action of $G$ on $X$ and a $G$-quasi-equivariant quasi-isometry $q:Y\to X$.}

To get the theorem, we need a few improvements. First, we want the quasi-isometry to be equivariant (instead of quasi-equivariant). Starting from $q$ as above, we proceed as follows. Fix a bounded set $Y_0\subset Y$ containing one point in each $G$-orbit. For $y_0\in Y_0$, let $G_{y_0}$ be its stabilizer. Since $G_{y_0}\{y_0\}=\{y_0\}$ and $q$ is quasi-equivariant, we see that $G_{y_0}\{q(y_0)\}$ has its diameter bounded by a constant $C$ depending only on $q$ (and not on $y_0$). By the centre lemma, $G_{y_0}$ fixes a point at distance at most $C$ of $q(y_0)$, which we define as $q'(y_0)$. For $y\in Y$ arbitrary, we pick $g$ and $y_0\in Y_0$ ($y_0$ is uniquely determined) such that $gy_0=y$ and define $q'(y)=gq'(y_0)$. By the stabilizer hypothesis, this does not depend on the choice of $g$, and we see that $q'$ is at bounded distance from $q$. So $q'$ is a quasi-isometry as well, and by construction is $G$-equivariant.

Now assume that $G$ is locally compact. Applying the previous result to $G$ endowed with the discrete topology, we get all the non-topological conclusions. Now from the additional hypothesis that the action is locally bounded, we obtain that the action on $X$ is locally bounded. We then invoke Lemma \ref{bbcon}, using that $X$ has no non-trivial bounded displacement isometry, to obtain that the $G$-action on $X$ is continuous. It is clear that the action on $X$ is cobounded, and that if $\rho$ is metrically proper then the action on $X$ is proper.
\end{proof}

\begin{proof}[Proof of Theorem \ref{cqi}]
Fix a left-invariant word metric $d_0$ on $G$. Then $(G,d)$ is quasi-isometric to $X$ and the action of $G$ on $(G,d)$ is locally bounded. So by Theorem \ref{kl9}, there exists a continuous isometric proper cocompact action on $X$.
\end{proof}

Theorem \ref{cqi} was initially proved in the case of discrete groups, for symmetric spaces with no factor of rank one by Kleiner-Leeb \cite{kl} and in rank one using different arguments by:
\begin{itemize}
\item Pansu in the case of quaternionic and octonionic hyperbolic spaces;

\item R. Chow in the case of the complex hyperbolic spaces \cite{chow},
;
\item Tukia for real hyperbolic spaces of dimension at least three \cite{tukia86};
\item In the case of the real hyperbolic plane, Tukia \cite{tukia88}, completed by, independently Casson-Jungreis \cite{cj} and Gabai \cite{ga}.
\end{itemize}
The synthesis for arbitrary symmetric spaces is due to Kleiner-Leeb \cite{kl09}, where they insightfully consider actions of arbitrary groups, not considering any topology but with no properness assumption. This generality is essential because when considering a proper cocompact locally bounded action of a non-discrete LC-group, when viewing it as an action of the underlying discrete group, we lose the properness.

\section{Quasi-isometric rigidity of trees}\label{qitree}

\subsection{The QI-rigidity statement}\label{qista}

The next theorem is the locally compact version of the result that a finitely generated group is quasi-isometric to a tree if and only if it is virtually free, which is a combination of Bass-Serre theory \cite{Ser77}, Stallings' theorem and Dunwoody's result that finitely presented groups are accessible \cite{du85}, a notion we discuss below.

\begin{thm}\label{cqia}
Let $G$ be a compactly generated locally compact group. Equivalences:
\begin{enumerate}
\item\label{cqia1b} $G$ is quasi-isometric to a tree;
\item\label{cqia1} $G$ is quasi-isometric to a bounded valency tree;
\item\label{cqia2} $G$ admits a continuous proper cocompact isometric action on a locally finite tree $T$, or a continuous proper transitive isometric action on the real line;
\item\label{cqia3} $G$ is topologically isomorphic to the Bass-Serre fundamental group of a finite connected graph of groups, with compact vertex and edge stabilizers with open inclusions, or has a continuous proper transitive isometric action on the real line.
\end{enumerate}
\end{thm}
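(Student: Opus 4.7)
The implications (iii) $\Rightarrow$ (ii) $\Rightarrow$ (i) are essentially immediate: a continuous proper cocompact action on a locally finite tree $T$ makes $G$ quasi-isometric to $T$ via any orbit map, and $T$ has bounded valency by cocompactness; a continuous proper transitive action on $\R$ makes $G$ quasi-isometric to $\R$, a degenerate bounded valency tree; and (ii) trivially implies (i). For (iii) $\Leftrightarrow$ (iv), I would invoke Bass-Serre theory in the locally compact setting: given a continuous proper cocompact $G$-action on a locally finite tree $T$, the vertex and edge stabilizers are compact open subgroups and the Bass-Serre dictionary presents $G$ as the topological fundamental group of the finite graph of groups $G\backslash\backslash T$ with open edge inclusions; conversely, the Bass-Serre tree of such a graph of groups is locally finite precisely because the edge group inclusions are open in the vertex groups, and $G$ acts on it continuously, properly, and cocompactly.

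The bulk of the work is (i) $\Rightarrow$ (iii). A tree is $0$-hyperbolic with totally disconnected Gromov boundary, and both hyperbolicity and the homeomorphism type of the boundary are quasi-isometry invariants among CGLC-groups; hence $G$ is hyperbolic with totally disconnected $\partial G$. I split according to the type of $G$ from \S\ref{tyh}. If $\partial G=\emptyset$, then $G$ is compact and acts on a one-vertex tree. If $|\partial G|=2$, then Theorem~\ref{ccmti}(\ref{c2}) gives $G/W$ isomorphic to $\R$, $\Isom(\R)$, $\Z$ or $D_\infty$: the first two yield the continuous proper transitive action on $\R$, while the last two yield a continuous proper cocompact action on the line viewed as a locally finite tree.

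In the non-elementary case, $\partial G$ is uncountable, compact, metrizable and totally disconnected, hence a Cantor set. In the focal subcase, Propositions~\ref{foco}, \ref{fotd}, \ref{fomix} classify $\partial G$ as a sphere, a Cantor set, or a pure millefeuille boundary respectively; only the totally disconnected type has Cantor boundary, and Proposition~\ref{fotd} directly provides a continuous proper cocompact isometric action on a regular tree of finite valency. In the general type subcase, I first show that $G$ has a compact open subgroup. The identity component $G^\circ$ is a closed connected normal subgroup, so each of its orbits on the totally disconnected $\partial G$ is a connected subset and thus a singleton, whence $G^\circ$ lies in the kernel of $G\to\Homeo(\partial G)$. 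For a non-elementary hyperbolic LC-group this kernel is compact (its elements have uniformly bounded displacement), so $G^\circ$ is compact and by van Dantzig $G$ has a compact open subgroup. Proposition~\ref{tdg} then furnishes a continuous proper vertex-transitive action of $G$ on a connected graph $\Gamma$ of finite valency, which is QI to a tree and therefore has a Cantor space of ends.

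The main obstacle is the final step: refining the action on the finite valency graph $\Gamma$ into a continuous proper cocompact action on a \emph{locally finite tree}. This is the content of graph-theoretic accessibility. For finitely generated groups, Stallings' ends theorem combined with Dunwoody's accessibility of finitely presented groups decomposes $G$ as the fundamental group of a finite graph of groups with one-ended or finite vertex groups; for infinitely many ends the process terminates in a tree. In the present setting the parallel results of Thomassen--Woess and Kr\"on--M\"oller, framed by Abels' structure theory for CGLC-groups with compact open subgroups, apply to $\Gamma$ with its Cantor end space to produce a canonical $G$-invariant nested system of thin cuts whose associated structure tree is locally finite and carries the desired cocompact $G$-action with compact open stabilizers. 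This yields (iii) and completes the equivalence.
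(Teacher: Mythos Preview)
Your argument is essentially correct and reaches the same endgame (graph-theoretic accessibility via Kr\"on--M\"oller), but it takes a genuinely different route from the paper. The paper never invokes the CCMT type classification nor the boundary topology of $G$: it first proves (i)$\Rightarrow$(ii) by an elementary tree-collapsing argument (pushing a quasi-isometry $T\to G$ through a metric lattice in $G$ and collapsing convex fibres), and then proves (ii)$\Rightarrow$(iv) by counting \emph{ends}. In the $\ge 3$-ended case it uses Houghton's Lemma~\ref{hough} to get $G^\circ$ compact, passes to a Cayley--Abels graph, and applies Theorem~\ref{km}. Your route instead reads the hyperbolic boundary: the focal case is dispatched by the structure theory of \S\ref{fh}, and in the general type case you recover $G^\circ$ compact from the fact that connected groups act trivially on totally disconnected boundaries. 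Your approach leans on heavier machinery (the CCMT classification of focal groups) but is conceptually clean; the paper's is more self-contained, relying only on classical ends theory.

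There is one genuine gap in your final paragraph. Accessibility in the sense of Definition~\ref{gac} and Theorem~\ref{km} only yields a splitting with compact \emph{edge} groups and \emph{at most $1$-ended} vertex groups; it does not by itself give compact vertex stabilizers, and the resulting Bass--Serre tree is therefore not a priori locally finite. The paper closes this gap with a short but essential claim: since $G$ is quasi-isometric to a tree, $G$ has \emph{no} $1$-ended closed subgroup (any such subgroup would contain a bi-infinite geodesic whose two ends stay distinct in the tree, contradicting $1$-endedness). Hence the vertex groups produced by accessibility are in fact compact, and the tree is locally finite. You should insert this argument; without it your appeal to ``compact open stabilizers'' on the structure tree is an assertion, not a proof.
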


\begin{proof}[Proof (first part)]
Let us mention that the equivalence between (\ref{cqia2}) and (\ref{cqia3}) is just Bass-Serre theory \cite{Ser77}. Besides, the trivial implications are (\ref{cqia2})$\Rightarrow$(\ref{cqia1})$\Rightarrow$(\ref{cqia1b}). 

Let us show (\ref{cqia1b})$\Rightarrow$(\ref{cqia1}); let $G$ be compactly generated and let $f:T\to G$ be a quasi-isometry from a tree $T$ (viewed as its set of vertices). We can pick a metric lattice $J$ inside $G$ and assume that $f$ is valued in $J$; the metric space $J$ has finite balls of cardinality bounded by a constant depending only on their radius. We can modify $f$ to ensure that $f^{-1}(\{j\})$ is convex for every $j\in J$. Since $f$ is a quasi-isometry the convex subsets $f^{-1}(\{j\})$ have uniformly bounded radius. We define a tree $T''$ by collapsing each convex subset $f^{-1}(\{j\})$ (along with the edges joining them) to a point. The collapsing map $T'\to T''$ is a 1-Lipschitz quasi-isometry and thus $f$ factors through an injective quasi-isometry $T''\to G$. It follows that $T''$ has balls of cardinal bounded in terms of the radius; this implies in particular that $T''$ has finite (indeed bounded) valency. 

We postpone the proof of (\ref{cqia1})$\Rightarrow$(\ref{cqia3}), which is the deep part of the theorem.
\end{proof}

\begin{remark}
Another characterization in Theorem \ref{cqia} is the following: $G$ is of type $\textnormal{FP}_2$ and has asymptotic dimension $\le 1$. Here type $\textnormal{FP}_2$ means that the homology of the Cayley graph of $G$ with respect to some compact generating subset is generated by loops of bounded length. Indeed, Fujiwara and Whyte \cite{FW} proved that a geodesic metric space satisfying these conditions is quasi-isometric to a tree.
\end{remark}

By {\em essential tree} we mean a nonempty tree with no vertex of degree 1 and not reduced to a line. By {\em reduced} graph of groups we mean a connected graph of groups in which no vertex group is trivial, and such that for every oriented non-self edge, the target map is not surjective. We call a reduced graph {\em nondegenerate} if its Bass-Serre tree is not reduced to the empty set, a point or a line, or equivalently if it is not among the following exceptions:
\begin{itemize}
\item The empty graph;
\item A single vertex with no edge;
\item A single vertex and a single self-edge so that both target maps are isomorphisms;
\item 2 vertices joined by a single edge, so that both target maps have image of index 2.
\end{itemize}

\begin{cor}\label{cqiab}
Let $G$ be a compactly generated locally compact group. Equivalences:
\begin{itemize}
\item[(\ref{cqia1b})] $G$ is quasi-isometric to an unbounded tree not quasi-isometric to the real line;
\item[(\ref{cqia1})] $G$ is quasi-isometric to the 3-regular tree;
%to an essential tree of bounded valency;
\item[(\ref{cqia2})] $G$ admits a continuous proper cocompact action on an essential locally finite nonempty tree $T$; 
\item[(\ref{cqia2}')] $G$ admits a continuous proper cocompact isometric action on nonempty tree $T$ of bounded valency, with only vertices of degree $\ge 3$; 
\item[(\ref{cqia3})] $G$ is topologically isomorphic to the Bass-Serre fundamental group of a finite nondegenerate reduced graph of groups, with compact vertex and edge stabilizers, and open inclusions.\qed
\end{itemize}
\end{cor}

\subsection{Reminder on the space of ends}
Recall that the set of ends of a geodesic metric space $X$ is the projective limit of $\pi_0(X-B)$, where $B$ ranges over bounded subsets of $X_1$ and $\pi_0$ denotes the set of connected components. Each $\pi_0(X-B)$ being endowed with the discrete topology, the set of ends is endowed with the projective limit topology and thus is called the space of ends $E(X)$.

If $X,Y$ are geodesic metric spaces, any large-scale Lipschitz, coarsely proper map $X\to Y$ canonically defines a continuous map $E(X)\to E(Y)$. Two maps at bounded distance induce the same map. This construction is functorial. In particular, it maps quasi-isometries to homeomorphisms.

If $G$ is a locally compact group generated by a compact subset $S$, the set of ends \cite{Sp,Ho} of $G$ is by definition the set of ends of the 1-skeleton of its Cayley graph with respect to $S$. It is compact. Since the identity map $(G,S_1)$ to $(G,S_2)$ is a quasi-isometry, the space of ends is canonically independent of the choice of $S$ and is functorial with respect to continuous proper group homomorphisms.

In the case when $G$ is hyperbolic, it is not hard to prove that the space of ends is canonically homeomorphic to the space $\pi_0(\bd G)$ of connected components of the visual boundary.

We note for reference the following elementary lemma, due to Houghton. It is a particular case of \cite[Th.~4.2 and 4.3]{Ho}.
\begin{lem}\label{hough}
Let $G$ be a CGLC-group with $G_0$ noncompact. Then $G$ is (1~or~2)-ended; if moreover $G/G_0$ is not compact then $G$ is 1-ended.
\end{lem}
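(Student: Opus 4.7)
The plan combines two classical ingredients: the fact that $e(G) \in \{0, 1, 2, \infty\}$ for any CGLC group $G$ (Freudenthal--Hopf--Specker, extended to the LC setting by Houghton and Abels), and the structural classification of $2$-ended CGLC groups provided by Theorem~\ref{ccmti}(\ref{c2}). I would proceed by exclusion: rule out $e(G) = 0$ trivially; rule out $e(G) = \infty$ using that a connected subgroup of the automorphism group of a locally finite tree must be trivial; and, for part (b), rule out $e(G) = 2$ by showing it is incompatible with $G/G_0$ being noncompact.

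Since $G_0$ is noncompact, so is $G$, giving $e(G) \geq 1$. Suppose for contradiction that $e(G) = \infty$; then by the $\infty$-ended case of Theorem~\ref{cqia}, $G$ admits a continuous, proper, cocompact action on a locally finite tree $T$. As $T$ is locally finite, $\Aut(T)$ endowed with the topology of pointwise convergence on vertices is totally disconnected, so the connected subgroup $G_0$ maps trivially under the action homomorphism $G \to \Aut(T)$. Thus $G_0$ lies in the kernel, which is contained in any single vertex stabilizer, itself compact by properness. This contradicts the noncompactness of $G_0$ and, together with the four-way classification, establishes part (a).

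For part (b), assume further that $G/G_0$ is noncompact and suppose for contradiction that $e(G) = 2$. Theorem~\ref{ccmti}(\ref{c2}) provides a compact normal subgroup $W \triangleleft G$ with $G/W$ isomorphic to one of $\Z$, $\Isom(\Z) \cong D_\infty$, $\R$, $\Isom(\R)$. The image $G_0 W/W$ is connected (continuous image of $G_0$) and noncompact (otherwise $G_0$ would lie in the preimage $G_0 W$ of a compact set, itself compact as $W$ is, contradicting $G_0$ noncompact). This rules out $G/W \in \{\Z, D_\infty\}$, whose identity components are trivial. In the remaining cases $(G/W)_0 = \R$, and a noncompact connected closed subgroup of $\R$ is $\R$ itself; so $G_0 W / W = (G/W)_0$, which is cocompact in $G/W$. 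Lifting, $G_0 W$ is cocompact in $G$; since $W$ is compact, $G_0$ is cocompact in $G_0 W$, hence in $G$. This contradicts the assumption that $G/G_0$ is noncompact, so $e(G) = 1$.

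The main obstacle is the appeal in the second step to the deep direction of Theorem~\ref{cqia} characterizing $\infty$-ended CGLC groups via actions on locally finite trees, whose proof is postponed to later in the paper. To avoid this circularity one could instead argue directly that the limit set $L \subset \partial G$ of $G_0$ satisfies $|L| \leq 2$ (via Gleason--Yamabe, by which $G_0$ is quasi-isometric to a noncompact connected Lie group, topologically a product $C \times \R^n$ with $n \geq 1$) and that $L = \partial G$ (through a connectedness argument applied to the closures $\overline{g G_0}$ of the $G_0$-cosets in the end compactification, combined with an inversion trick to transfer sequences back into $G_0$).
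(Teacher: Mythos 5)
There is a genuine gap at the crucial step, namely the exclusion of infinitely many ends. Theorem \ref{cqia} has no ``$\infty$-ended case'': it characterizes CGLC-groups \emph{quasi-isometric to a tree}, and having infinitely many ends does not imply this. (For instance a free product of a $1$-ended group with $\Z$ is $\infty$-ended but not quasi-isometric to any tree; indeed the paper's own proof of Theorem \ref{cqia} shows a group quasi-isometric to a tree has no $1$-ended closed subgroup.) So your second step simply does not follow from the theorem you invoke. Moreover, even if you replace it by the correct tool --- the Stallings--Abels splitting of a ($\ge 3$)-ended group over a compact open subgroup --- you cannot quote Theorem \ref{struktur} or Corollary \ref{2ended} as stated in this paper, because their proofs here use Lemma \ref{hough} itself; you half-acknowledge this circularity, but the closing sketch (limit sets, an ``inversion trick'') is far too vague to count as a repair. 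The non-circular argument is to cite Abels \cite{Abe} directly: if $G$ had at least $3$ ends it would split over a compact open subgroup, hence possess a compact open subgroup; since $G_0$ lies in every open subgroup, this forces $G_0$ compact, a contradiction. Note also that the paper does not prove the lemma at all: it is quoted as a special case of Houghton \cite[Th.~4.2 and 4.3]{Ho}.

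Your part (b) has the right mechanism (pass to $G_0W/W$ inside $\Z$, $D_\infty$, $\R$ or $\Isom(\R)$ and deduce cocompactness of $G_0$), but the input is mis-sourced: Theorem \ref{ccmti}(\ref{c2}) cannot be applied, since its hypotheses (hyperbolicity and a cocompact closed amenable subgroup) are nowhere verified, and you cannot know you are in case (\ref{c2}) just from $e(G)=2$. The structure of $2$-ended CGLC-groups you need is the classical result of Houghton \cite[Theorem~3.7]{Ho} and Abels \cite{Abe} (restated in this paper as Corollary \ref{2ended}(\ref{2e4}), whose proof again passes through Lemma \ref{hough}, so it must be cited from the original sources to avoid circularity). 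With that substitution, and after the exclusion of $e(G)=\infty$ is fixed as above, the deduction $G_0W/W=(G/W)_0\cong\R$ cocompact, hence $G/G_0$ compact, is correct.
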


Let us also mention, even if it will be not be used here, the following theorem of Abels \cite{Ab77}: if a compactly generated LC-group has at least 3 ends, the action of $G$ on $E(G)$ is minimal (i.e., all orbits are dense) unless $G$ is focal hyperbolic of totally disconnected type (in the sense of \S\ref{fhtd}).

\subsection{Metric accessibility}

\begin{defn}[Thomassen, Woess \cite{tw}]Let $X$ be a bounded valency connected graph. Say that $X$ is {\em accessible} if there exists $m$ such that for every two distinct ends of $X$, there exists an $m$-element subset of the 1-skeleton $X$ that separates the two ends, i.e.\ so that the two ends lie in distinct components of the complement. 
\end{defn}

Accessibility is a quasi-isometry invariant of connected graphs of bounded valency.
Although not needed in view of Theorem \ref{cqia}, we introduce the following definition, which is more metric in nature.

\begin{defn}Let $X$ be a geodesic metric space. Let us say that $X$ is {\em diameter-accessible} if there exists $m$ such that for every two distinct ends of $X$, there exists a subset of $X$ of diameter at most $m$ that separates the two ends.
\end{defn}

This is obviously a quasi-isometric invariant property among geodesic metric spaces. Obviously for a bounded valency connected graph, diameter-accessibility implies accessibility. The reader can construct, as an exercise, a connected planar graph of valency $\le 3$ that is accessible (with $m=2$) but not-diameter accessible.

\begin{thm}[Dunwoody]
Let $X$ be a connected, locally finite simplicial 2-complex with a cocompact isometry group. Assume that $H^1(X,\Z/2\Z)=0$ (e.g., $X$ is simply connected). Then $X$ is diameter-accessible.
\label{gdun}\end{thm}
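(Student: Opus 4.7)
The plan is to apply Dunwoody's theory of tracks on 2-complexes. Recall that a \emph{track} in a simplicial 2-complex $X$ (or in a suitable barycentric subdivision) is a connected 1-dimensional subcomplex $\tau$ meeting each closed 2-simplex $\sigma$ in a finite disjoint union of properly embedded arcs joining distinct edges of $\sigma$; to $\tau$ one associates a mod-2 intersection cochain $c_\tau\in C^1(X,\Z/2\Z)$ by counting parities of arc endpoints on each edge. The hypothesis $H^1(X,\Z/2\Z)=0$ forces $c_\tau=\delta f$ for some $f\colon X^{(0)}\to\Z/2\Z$, and then $X\smallsetminus\tau$ consists of exactly two components (the level sets of $f$): every connected track is \emph{separating}. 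This is the only place where the cohomological hypothesis enters.

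Given two distinct ends $\eps_1,\eps_2$ of $X$, I would first produce a finite track separating them. Pick a bounded set $B\subset X$ whose complement has two distinct unbounded components $U_1,U_2$ representing $\eps_1,\eps_2$, let $f$ be the indicator of $U_1$ together with the bounded components of $X\smallsetminus B$, and observe that $\delta f$ is a 1-cocycle supported in a bounded neighbourhood of $B$. A routine general-position argument on a refinement realises $\delta f$ as the intersection cochain of a finite disjoint union of tracks, at least one connected component $\tau_0$ of which must separate $\eps_1$ from $\eps_2$.

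The crux, and the step I expect to be the main obstacle, is to bound the diameter of such a separating track uniformly in the pair $(\eps_1,\eps_2)$. I would choose a separating track $\tau^\ast$ minimising the combinatorial complexity $n(\tau):=\#\{\sigma\in X^{(2)}:\tau\cap\sigma\ne\emptyset\}$ and invoke Dunwoody's accessibility theorem \cite{du85} in its 2-complex formulation: for a locally finite 2-complex with cocompact symmetry group and $H^1(-,\Z/2\Z)=0$, there is a uniform bound $N=N(X)$ on the size of any system of pairwise disjoint, pairwise non-parallel tracks. If $n(\tau^\ast)$ were large, a pigeonhole argument on the finitely many $\Isom(X)$-orbits of ``local patterns'' by which a track may cross a 2-simplex would let one extract from $\tau^\ast$ more than $N$ mutually disjoint, pairwise non-parallel subtracks, contradicting Dunwoody's bound together with the minimality of $\tau^\ast$. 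This yields $n(\tau^\ast)\le M$ for a constant $M=M(X)$ independent of $\eps_1,\eps_2$.

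Once the combinatorial complexity is bounded, cocompactness of $\Isom(X)$ converts it to a metric bound: each arc $\tau^\ast\cap\sigma$ lies in a single 2-simplex, which up to an isometry of $X$ belongs to a fixed fundamental domain, so $\tau^\ast$ has bounded 1-volume and hence bounded diameter $m=m(X)$ in $X$. The subset $\tau^\ast$ (or a tiny thickening of it in $X$) is then the required diameter-$\le m$ separator of $\eps_1$ from $\eps_2$. The genuinely hard step is Dunwoody's finiteness estimate for disjoint non-parallel tracks; mod-2 cohomology, general position, and the fundamental-domain argument are routine once that bound is in hand.
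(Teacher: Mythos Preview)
Your plan diverges from the paper's, and the divergence is precisely where the trouble lies. The paper does not argue end-pair by end-pair: it simply invokes the \emph{output} of Dunwoody's construction in \cite{du85}, namely an $\Isom(X)$-equivariant family $(C_i)_{i\in I}$ of pairwise disjoint compact separating tracks falling into finitely many $\Isom(X)$-orbits, such that every component of $X\smallsetminus\bigcup_i C_i$ is (at most 1)-ended. Any two distinct ends of $X$ are then separated by some $C_i$, and since the $C_i$ lie in finitely many isometry classes their diameters are uniformly bounded. That is the entire argument.

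Your route instead fixes a pair of ends, picks a minimal separating track $\tau^\ast$, and tries to bound $n(\tau^\ast)$ via a pigeonhole argument against a purported ``bound $N$ on systems of pairwise disjoint, pairwise non-parallel tracks''. There are two problems. First, this is not what \cite{du85} establishes, and indeed no such uniform bound exists in general: already for a regular tree of valency $\ge 3$ (thickened to a simply connected locally finite 2-complex with cocompact automorphism group) the edge-tracks along any geodesic ray are pairwise disjoint and pairwise non-parallel, so arbitrarily large such systems occur. What Dunwoody actually produces is the specific equivariant pattern described above, not a finiteness bound on arbitrary track systems. Second, even granting some bound $N$, the pigeonhole step is a genuine gap: $\tau^\ast$ is connected, so it has no ``subtracks'' in any useful sense, and repetition of local crossing patterns along a long $\tau^\ast$ does not manufacture many \emph{disjoint} tracks --- the translates $g\tau^\ast$ sharing a local pattern at a common simplex will typically intersect one another. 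Minimality of $n(\tau^\ast)$ provides no evident leverage here. I do not see a repair that avoids essentially reconstructing the equivariant track family, which is the paper's route.
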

\begin{proof}[On the proof]
This statement is not explicit, but is the contents of the proof of \cite{du85} (it is quoted in \cite[Theorem 15]{MSW} in a closer way). This proof consists in finding, denoting by $G=\Aut(X)$, an equivariant family $(C_i)_{i\in I}$, indexed by a discrete $G$-set $I$ with finitely many $G$-orbits, of pairwise disjoint compact subsets of $X$ homeomorphic to graphs and each separating $X$ (each $X\smallsetminus C_i$ is not connected), called tracks, so that each component of $X\smallsetminus \bigcup C_i$ has the property that its stabilizer in $G$ acts coboundedly on it, and is (at most 1)-ended. Thus any two ends of $X$ are separated by one of these components, which have uniformly bounded diameters.
\end{proof}

\subsection{The locally compact version of the splitting theorem}
The following theorem is the locally compact version of Stallings' theorem; it was proved by Abels (up to a minor improvement in the 2-ended case).

\begin{thm}[Stallings, Abels]\label{struktur}Let $G$ be a compactly generated, locally compact group. Then $G$ has at least two ends if and only if splits as a non-trivial HNN-extension or amalgam over a compact open subgroup, unless $G$ is 2-ended and is compact-by-$\R$ or compact-by-$\Isom(\R)$.\end{thm}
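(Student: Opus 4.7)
The plan is to separate the easy ``if'' direction from the hard ``only if'' direction, and in the latter to subdivide according to whether the identity component $G_0$ is compact.

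\emph{Easy direction.} If $G$ is compact-by-$\R$ or compact-by-$\Isom(\R)$ it is immediately $2$-ended. If instead $G$ splits non-trivially as $A \ast_K B$ or as an HNN-extension with associated subgroup a compact open $K$, Bass-Serre theory yields a $G$-action on a tree $T$ with edge stabilizers conjugate to $K$; non-triviality ensures $T$ is unbounded and has at least two ends. The orbit map $g\mapsto g\cdot e_0$ for a base edge $e_0$ is coarsely Lipschitz (one may choose a compact generating set of $G$ contained in the vertex groups, and such generators move $e_0$ boundedly), coarsely proper (its fibres are cosets of the compact open group $K$), and has cobounded image (equal to the edge set $G/K$ of $T$). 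It therefore induces a continuous surjection $E(G)\twoheadrightarrow E(T)$, so $G$ has at least two ends.

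\emph{Hard direction, Case A: $G_0$ non-compact.} By Lemma~\ref{hough}, $G$ is $(1$ or $2)$-ended, so having at least two ends forces $G$ to be $2$-ended with $G/G_0$ compact. Then $G$ is quasi-isometric to $G_0$, so $G_0$ is a $2$-ended connected Lie group. The classical structure theory implies that any such group is compact-by-$\R$: its maximal compact normal subgroup $W$ is characteristic in $G_0$, and $G_0/W$, a Lie group with no non-trivial compact normal subgroup, can only be $2$-ended if isomorphic to $\R$. Hence $W\triangleleft G$ and $G/W$ is a closed cocompact subgroup of $\Isom(\R)$, i.e.\ $\R$ or $\Isom(\R)$.

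\emph{Hard direction, Case B: $G_0$ compact.} By Proposition~\ref{tdg}, $G$ contains a compact open subgroup $K$ and acts properly and vertex-transitively on a connected locally finite graph $X$, to which it is quasi-isometric; in particular $X$ has at least two ends. Adapting Stallings' theorem on ends to this setting (Abels' contribution), one extracts from the pair of ends an almost-invariant subset $A\subset V(X)$ with finite coboundary and both $A$ and $V(X)\smallsetminus A$ infinite; the Stallings-Dunwoody cut construction then produces a $G$-invariant family of pairwise non-crossing finite cuts dual to a tree $T$ on which $G$ acts without inversion, with finitely many orbits of vertices and edges. An edge stabilizer in $T$ is the pointwise $G$-stabilizer of a finite subset of $V(X)$, i.e.\ an intersection of finitely many conjugates of the compact open subgroup $K$, hence compact and open. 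The Bass-Serre dictionary (the implication (\ref{cqia2})$\Rightarrow$(\ref{cqia3}) of Theorem~\ref{cqia}, with compact open edge groups in place of compact ones) then expresses $G$ as the fundamental group of a finite graph of groups with compact open edge groups, and picking any single edge yields the desired non-trivial amalgam or HNN-extension over a compact open subgroup.

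The main obstacle is the Stallings-Abels cut construction in Case B; once it is granted, the remaining ingredients are either structural (Case A, via classical Lie theory) or formal (the quasi-isometry check in the easy direction and the translation from a cocompact tree action with compact open edge stabilizers to a splitting). It is worth noting that Dunwoody's accessibility (Theorem~\ref{gdun}) plays no role here, since the statement only asks for the existence of a single non-trivial splitting rather than the termination of an iterated splitting process.
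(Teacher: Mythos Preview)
Your overall structure matches the paper's: split on whether $G_0$ is compact, cite Abels' Struktursatz for Case~B, and handle Case~A by Lie-theoretic structure. Case~B and the easy direction are fine.

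There is a genuine (if repairable) gap in Case~A. First, $G_0$ is a connected \emph{locally compact} group, not a priori a Lie group; you need Yamabe's theorem to pass to a Lie quotient. More seriously, you take $W$ to be the maximal compact normal subgroup of $G_0$ and then assert that $G/W$ is $\R$ or $\Isom(\R)$. This need not hold: since $G/G_0$ is only compact totally disconnected (not necessarily finite), $G/W$ can be, for instance, $\R\times Q$ with $Q$ an infinite profinite group, which does not embed in $\Isom(\R)$. The conclusion that $G$ is compact-by-$\R$ or compact-by-$\Isom(\R)$ is still true, but the compact kernel is in general strictly larger than your $W$, and you have not produced the map to $\Isom(\R)$.

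The paper avoids this by working with a maximal compact \emph{subgroup} $K$ of $G$ (not of $G_0$, and not normal): by the Mostow/Montgomery--Zippin theorem, $G/K$ carries a $G$-invariant Riemannian metric and is diffeomorphic to a Euclidean space; two ends forces dimension one, whence a continuous proper isometric action of $G$ on $\R$, and the conclusion follows directly. Your route can be salvaged (split the central extension of the profinite quotient by $\R$ and quotient further), but the paper's argument is shorter and sidesteps the issue entirely.
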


\begin{proof}[On the proof]%[Proof of Theorem \ref{struktur}]
The main case is when $G$ is a closed cocompact isometry group of a vertex-transitive graph; this is \cite[Struktursatz 5.7]{Abe}. He deduces the theorem \cite[Korollar 5.8]{Abe} when $G$ has at least 3 ends in \cite[Struktursatz 5.7]{Abe}, the argument also covering the case when $G$ has a compact open subgroup (i.e.\ $G_0$ is compact) in the case of at least 2 ends.

Assume now that $G_0$ is noncompact. By Lemma \ref{hough}, if $G/G_0$ is noncompact then $G$ is 1-ended, so assume that $G/G_0$ is compact.
Then $G$ has a maximal compact subgroup $K$ so that $G/K$ admits a $G$-invariant structure of  Riemannian manifold diffeomorphic to a Euclidean space (\cite[Theorem 3.2]{Mos} and \cite[Theorem 4.6]{MZ}); the only case where this manifold has at least two ends is when it is one-dimensional, hence isometric to $\R$, whence the conclusion. 
\end{proof}

This yields the following corollary, which we state for future reference. The characterization (\ref{2e2}) of 2-ended groups is contained in Houghton \cite[Theorem~3.7]{Ho}; the stronger characterization (\ref{2e22}) is due to Abels \cite[Satz~B]{Abe}; the stronger versions (\ref{2e3}) or (\ref{2e4}) can be deduced directly without difficulty; they are written in \cite[Proposition~5.6]{CCMT}.

\begin{cor}\label{2ended}Let $G$ be a compactly generated, locally compact group. Equivalences:
\begin{enumerate}
\item\label{2e1} $G$ has exactly 2 ends;
\item\label{2bouts} $G$ is hyperbolic and $\#\partial G=2$;
\item\label{2e11} $G$ is quasi-isometric to $\Z$;
\item\label{2e2} $G$ has a discrete cocompact infinite cyclic subgroup;
\item\label{2e22} $G$ has an open subgroup of index $\le 2$ admitting a continuous homomorphism with compact kernel onto $\Z$ or $\R$;
\item\label{2e3} $G$ admits a continuous proper cocompact isometric action on the real line;
\item\label{2e4} $G$ has a (necessarily unique) compact open normal subgroup $W$ such that $G/W$ is isomorphic to one of the 4 following groups: $\Z$, $\Isom(\Z)$, $\R$, $\Isom(\R)$.
\end{enumerate}
\end{cor}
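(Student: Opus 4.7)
The plan is to organize the seven conditions around the structural condition (\ref{2e4}). All implications are routine except for (\ref{2e1}) $\Rightarrow$ (\ref{2e4}), which I would extract from Theorem \ref{struktur} (Stallings--Abels).

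For the easy half, (\ref{2e4}) $\Rightarrow$ (\ref{2e3}) is immediate: each of $\Z$, $\Isom(\Z)$, $\R$, $\Isom(\R)$ admits a canonical continuous proper cocompact action by isometries on $\R$, and this pulls back through $G \twoheadrightarrow G/W$ (with compact kernel $W$) to one on $G$. Then (\ref{2e3}) $\Rightarrow$ (\ref{2e11}) by any orbit map, and (\ref{2e11}) $\Rightarrow$ (\ref{2e1}), (\ref{2bouts}) since the number of ends, hyperbolicity, and the topological type of the Gromov boundary are quasi-isometry invariants, while $\Z$ is hyperbolic with $\#\partial\Z=2$ and two ends. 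Moreover (\ref{2bouts}) $\Rightarrow$ (\ref{2e1}) follows because for a hyperbolic CGLC-group the space of ends is canonically $\pi_0(\partial G)$. To reach (\ref{2e22}) and (\ref{2e2}) from (\ref{2e4}), in each of the four models for $G/W$ take the orientation-preserving open subgroup of index $\le 2$, which maps continuously and properly with compact kernel onto $\Z$ or $\R$, and pull back to $G$; lifting a generator of $\Z$ (or of $\Z\subset\R$) yields a discrete cocompact infinite cyclic subgroup. The chain (\ref{2e22}) $\Rightarrow$ (\ref{2e2}) $\Rightarrow$ (\ref{2e11}) then closes the circle.

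For the key step (\ref{2e1}) $\Rightarrow$ (\ref{2e4}), apply Theorem \ref{struktur} and split into cases. If $G$ is compact-by-$\R$ or compact-by-$\Isom(\R)$, then the given compact normal subgroup is automatically the unique maximal one (since neither $\R$ nor $\Isom(\R)$ has any nontrivial compact normal subgroup), and (\ref{2e4}) follows with $G/W \in \{\R,\Isom(\R)\}$. Otherwise $G$ splits nontrivially as an amalgam or HNN extension over a compact open subgroup, and therefore acts continuously, properly, and cocompactly by automorphisms on the associated Bass--Serre tree $T$, which is locally finite with compact open vertex stabilizers. Properness and cocompactness make any orbit map $G \to T$ a quasi-isometry, so $T$ has exactly two ends and is hence a bi-infinite line. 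Consequently $\Aut(T) \cong \Isom(\Z) \cong D_\infty$, and the induced continuous homomorphism $\varphi \colon G \to \Isom(\Z)$ has kernel $W$ equal to the pointwise stabilizer of $T$, which is compact (an intersection of compact vertex stabilizers) and open (contained in a single vertex stabilizer), together with closed cocompact image, necessarily of finite index, namely $\Z$ or $\Isom(\Z)$. This yields (\ref{2e4}).

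The main obstacle is the black-box appeal to Theorem \ref{struktur}, which itself depends on the locally compact analogue of Dunwoody's accessibility (due to Abels); granting it, the remainder reduces to the elementary observations that a locally finite tree with two ends is the line, that $\Isom(\Z)$ has only $\Z$ and itself as closed cocompact subgroups, and that in each of the four cases the quotient group has trivial maximal compact normal subgroup, which forces uniqueness of $W$.
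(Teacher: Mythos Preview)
Your overall strategy matches the paper's: establish the easy implications, then use Theorem~\ref{struktur} (Stallings--Abels) for (\ref{2e1})$\Rightarrow$(\ref{2e4}). The paper argues directly that the only nontrivial splittings over a compact open subgroup compatible with having exactly two ends are a degenerate HNN extension or an amalgam over index-$2$ subgroups on both sides, giving a compact-kernel map onto $\Z$ or $\Isom(\Z)$. Your Bass--Serre tree formulation is equivalent in spirit, but the argument as written is circular.

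You assert that the action on the Bass--Serre tree $T$ is proper with compact open vertex stabilizers, and then use this (via the orbit map being a quasi-isometry) to conclude that $T$ has two ends and is the line. But properness is equivalent to compactness of vertex stabilizers, and that is \emph{not} a consequence of splitting over a compact open subgroup: already $F_2=\Z*\Z$ split over the trivial group has infinite vertex stabilizers. What you do have for free is compactness of \emph{edge} stabilizers. The fix is to reverse the order: the edge-orbit map $g\mapsto gC$ from $G$ to the edge set of $T$ is coarsely proper (since $C$ is compact) and coarsely Lipschitz (since $C$ is open and $G$ compactly generated), so it induces an injection from the ends of $T$ into those of $G$; hence $T$ has at most two ends. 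As the Bass--Serre tree of a nontrivial splitting, $T$ has no leaves and is infinite, forcing it to be the line. Only then can you conclude that each vertex stabilizer contains an edge stabilizer with index $\le 2$ and is therefore compact. This is exactly the paper's case analysis, rephrased.

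Two smaller points. Your reason for $W$ being open, ``contained in a single vertex stabilizer'', is not valid: a subgroup of an open subgroup need not be open. The correct justification is that an isometry of the line fixing two adjacent vertices is the identity, so $W$ equals the pointwise stabilizer of a single edge, i.e.\ the intersection of two open vertex stabilizers, hence open. Finally, Theorem~\ref{struktur} does not rest on accessibility: Stallings--Abels produces one splitting, while accessibility concerns whether iterated splitting terminates, which is irrelevant here.
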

\begin{proof}
The implications (\ref{2e4})$\Rightarrow$(\ref{2e3})$\Rightarrow$(\ref{2e22})$\Rightarrow$(\ref{2e2})$\Rightarrow$(\ref{2e11})$\Rightarrow$(\ref{2bouts})$\Rightarrow$(\ref{2e1}) are clear. Assume (\ref{2e1}). Then, by Theorem \ref{struktur}, either (\ref{2e4}) holds explicitly or $G$ splits as a non-trivial HNN-extension or amalgam over a compact open subgroup; the only cases for which this does not yield more than 2 ends is the case of a degenerate HNN extension (given by an automorphism of the full vertex group) or an amalgam over a subgroup of index 2 in both factors. Thus $G$ maps with compact kernel onto $\Z$ or the infinite dihedral group $\Isom(\Z)$. 
\end{proof}

\subsection{Accessibility of locally compact groups}

It is natural to wonder whether the process of applying iteratively Theorem \ref{struktur} stops. This motivates the following definition.

\begin{defn}\label{gac}
A CGLC-group $G$ is {\em accessible} if it has a continuous proper transitive isometric action on the real line, or 
satisfies one of the following two equivalent conditions
\begin{enumerate}
\item either $G$ admits a continuous cocompact action on a locally finite tree $T$ with (at most 1)-ended vertex stabilizers and compact edge stabilizers;
\item $G$ is topologically isomorphic to the Bass-Serre fundamental group of a finite graph of groups, with compact edge stabilizers with open inclusions and (at most 1)-ended vertex groups.
\end{enumerate}
\end{defn}
The equivalence between the two definitions is Bass-Serre theory \cite{Ser77}. For either definition, trivial examples of accessible CGLC groups are (at most 1)-ended groups. Also, by Theorem \ref{struktur}, 2-ended CGLC groups are accessible, partly using the artifact of the definition. A non-accessible finitely generated group was constructed by Dunwoody in \cite{Du93}, disproving a long-standing conjecture of Wall.

\subsection{Metric vs group accessibility and proof of Theorem \ref{cqia}}
The metric notion of accessibility, which unlike in this paper was introduced after group accessibility, allowed Thomassen and Woess, using the Dicks-Dunwoody machinery \cite{DiDu}, to have a purely geometric characterization of group accessibility (as defined in Theorem \ref{cqia}). The following theorem is the natural extension of their method to the locally compact setting, due to Kr\"on and M\"oller \cite[Theorem~15]{KM}.

\begin{thm}\label{km}
Let $X$ be a bounded valency nonempty connected graph and $G$ a locally compact group acting continuously, properly cocompactly on $X$ by graph automorphisms. Then the following are equivalent
\begin{enumerate}
\item\label{ac1} $G$ is accessible (as defined in Definition \ref{gac});
\item\label{ac2} $X$ is accessible;
\item\label{ac3} $X$ is diameter-accessible.
\end{enumerate}
\end{thm}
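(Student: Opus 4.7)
The plan is to prove the cycle $(\ref{ac1}) \Rightarrow (\ref{ac3}) \Rightarrow (\ref{ac2}) \Rightarrow (\ref{ac1})$, with the real content being $(\ref{ac2}) \Rightarrow (\ref{ac1})$. The direction $(\ref{ac3}) \Rightarrow (\ref{ac2})$ is immediate: in a graph of valency at most $v$, a set of diameter at most $m$ sits in a ball of radius $m$, hence has at most $1+v(v-1)^{m-1}$ vertices, so $m$-diameter-accessibility implies $(1+v(v-1)^{m-1})$-accessibility. For $(\ref{ac1}) \Rightarrow (\ref{ac3})$, the case of a transitive action on $\R$ forces $X$ to be quasi-isometric to $\Z$ and $(\ref{ac3})$ is trivial. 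Otherwise write $G$ as the fundamental group of a finite graph of groups with compact-open edge groups and at most one-ended vertex groups, and let $T$ be the associated Bass-Serre tree; $G$ acts continuously, properly, cocompactly on $T$. Fix a $G$-equivariant coarsely Lipschitz map $\varphi \colon X \to T$. Two distinct ends of $X$ correspond to two distinct ends of $G$ and, via the classical identification for graph-of-groups decompositions with at most one-ended vertex groups, to two distinct ends of $T$, so they are separated by some edge $e$ of $T$. The preimage under $\varphi$ of a small neighborhood of the midpoint of $e$ is $G_e$-invariant and, $G_e$ being compact, bounded; finiteness of the number of $G$-orbits of edges of $T$ yields a uniform diameter bound, giving $(\ref{ac3})$.

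The main direction is $(\ref{ac2}) \Rightarrow (\ref{ac1})$. Let $m$ be an accessibility constant and let $\mathcal C_m$ be the $G$-set of \emph{cuts} of $X$, i.e.\ bipartitions $V(X) = A \sqcup A^c$ with edge-boundary $\partial A$ of size at most $m$ and with both sides containing an end. The Dicks-Dunwoody tracks construction, in the graph-theoretic formulation of Thomassen-Woess, extracts a $G$-invariant \emph{nested} subfamily $\mathcal T \subset \mathcal C_m$ that still separates any two distinct ends of $X$. The nesting relation endows $\mathcal T$ with the combinatorial structure of a $G$-tree $T$: cocompactness of $G$ on $X$ yields that $\mathcal T / G$ is finite and that $T$ is locally finite; properness of the action on $X$, applied to the bounded sets $\partial A$, yields that edge stabilizers of $T$ are compact; and the vertex stabilizers act properly and coboundedly on the "pieces" of $X$ obtained by cutting along $\mathcal T$. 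By maximality of $\mathcal T$, no cut of size at most $m$ lives inside a piece, so each piece is at most one-ended. By Bass-Serre theory, this exhibits $G$ as accessible in the sense of Definition \ref{gac}.

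The main obstacle is the Dicks-Dunwoody step: extracting a nested $G$-invariant subfamily from $\mathcal C_m$ is not possible by any naive greedy procedure, because two cuts of minimum size can cross, and it is the core of the theory of tracks on the 2-complex canonically associated to $X$. The additional subtlety in the locally compact setting — and the content of the Kr\"on-M\"oller extension of Thomassen-Woess — is that properness rather than discreteness of the $G$-action must be used to make stabilizers of bounded sets compact, and cocompactness has to transfer from $X$ to the pieces; both of these flow transparently from the standing hypotheses.
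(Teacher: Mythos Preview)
Your proposal is correct and follows essentially the same route as the paper, which treats (\ref{ac3})$\Rightarrow$(\ref{ac2}) as trivial, refers to Kr\"on--M\"oller for (\ref{ac2})$\Rightarrow$(\ref{ac1}) via the Thomassen--Woess/Dicks--Dunwoody nested-cut construction, and observes that the Kr\"on--M\"oller argument for (\ref{ac1})$\Rightarrow$(\ref{ac2}) already yields (\ref{ac1})$\Rightarrow$(\ref{ac3}) since the equivariant family of cuts has finitely many $G$-orbits and hence uniformly bounded diameter. One small point: in your (\ref{ac1})$\Rightarrow$(\ref{ac3}) step, ``$G_e$-invariant and $G_e$ compact'' does not by itself force boundedness---what you need (and implicitly use via cocompactness of $G$ on $X$) is that $\varphi^{-1}(\{m_e\})$ meets each $G$-orbit in $X$ in at most one $G_e$-orbit, so is a finite union of bounded $G_e$-orbits.
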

\begin{proof}[On the proof]
The statement in \cite{KM} is the equivalence between (\ref{ac1}) and (\ref{ac2}) when $G$ is totally disconnected. But actually the easy implication (\ref{ac1})$\Rightarrow$(\ref{ac2}) yields (\ref{ac1})$\Rightarrow$(\ref{ac3}) without change in the proof, as they check that $X$ is quasi-isometric to a bounded valency graph with a cocompact action, with a $G$-equivariant family of ``cuts", which are finite sets, with finitely many $G$-orbits of cuts, so that any two distinct ends are separated by one cut. Finally  (\ref{ac3})$\Rightarrow$(\ref{ac2}) is trivial for an arbitrary bounded valency graph.
\end{proof}

%\subsection{Proof of Theorem \ref{cqia}}
\begin{proof}[End of the proof of Theorem \ref{cqia}]
It remains to show the main implication of Theorem \ref{cqia}, namely that (\ref{cqia1}) implies (\ref{cqia3}).  
Let $G$ be a CGLC-group quasi-isometric to a tree $T$. We begin by the claim that $G$ has no 1-ended closed subgroup. Indeed, if $H$ were such a group, then, being noncompact $H$, admits a bi-infinite geodesic, and using the quasi-isometry to $T$ we see that this geodesic has 2 distinct ends in $G$. Then these two ends are distinct in $H$, a contradiction.
Let us now prove (\ref{cqia3}). 
We first begin by three easy cases
\begin{itemize}
\item $G$ is compact. There is nothing to prove.
\item $G$ is 1-ended. This has just been ruled out.
\item $G$ is 2-ended. In this case Corollary \ref{2ended} gives the result.
\end{itemize}
So assume that $G$ has at least 3 ends. By Lemma \ref{hough}, $G_0$ is compact, so by Proposition \ref{tdg}, it admits a continuous proper cocompact isometric action on a connected finite valency graph $X$. Since $X$ is quasi-isometric to a tree, it is diameter-accessible. By Theorem \ref{km}, we deduce that $G$ is accessible. This means that it is isomorphic to the Bass-Serre fundamental group of a graph of groups with (at most 1)-ended vertex groups and compact open edge groups. As we have just shown that $G$ has no closed 1-ended subgroup, it follows that vertex groups are compact. So (\ref{cqia3}) holds.
\end{proof}

\subsection{Cobounded actions}

There is a statement implying Theorem \ref{cqia}, essentially due to Mosher, Sageev and Whyte, concerning cobounded actions with no properness assumption. In a tree, say that a vertex is an {\em essential branching vertex} if its complement in the 1-skeleton has at least 3 unbounded components. Say that a tree is {\em bushy} if the set of essential branching vertices is cobounded. We here identify any tree to its 1-skeleton.

\begin{thm}\label{mMSW}
Let $G$ be a locally compact group. Consider a locally bounded, isometric, cobounded action of $G$ on a metric space $Y$ quasi-isometric to a bushy tree $T$ of bounded valency. Then there exists a tree $T'$ of bounded valency with a continuous isometric action of $G$ and an equivariant quasi-isometry $T\to T'$. 
\end{thm}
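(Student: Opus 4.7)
I would prove Theorem \ref{mMSW} by following the template of the proof of Theorem \ref{kl9} in \S\ref{qapt}, with the Kleiner--Leeb quasi-action rigidity theorem for symmetric spaces replaced by its tree analogue due to Mosher, Sageev and Whyte. First, apply Lemma \ref{qadico}(\ref{rh2}) to the given isometric action on $Y$: choosing a quasi-isometry $q_0:Y\to T$ with quasi-inverse $s:T\to Y$, the formula $\rho(g)t=q_0(g\cdot s(t))$ defines a cobounded, locally bounded ULSL quasi-action of $G$ on $T$ for which $q_0$ is a quasi-isometric quasi-conjugacy.

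Next, I would invoke the Mosher--Sageev--Whyte theorem \cite{MSW}, which asserts that every cobounded ULSL quasi-action of an abstract group on a bushy, bounded valency tree is QI quasi-conjugate to an isometric action on some bounded valency tree. Applied to $\rho$ (with $G$ regarded abstractly), this yields a bounded valency tree $T'$, an isometric action $\sigma$ of $G$ on $T'$, and a quasi-equivariant quasi-isometry $T\to T'$; composing with $q_0$ produces a quasi-equivariant quasi-isometry $q:Y\to T'$. I would then upgrade $q$ to a genuinely equivariant map by the centering trick from the proof of Theorem \ref{kl9}: fix a bounded set $Y_0\subset Y$ containing one representative of each $G$-orbit; for every $y_0\in Y_0$ the stabilizer $G_{y_0}$ acts on $T'$ with an orbit of uniformly bounded diameter, so by the centre lemma in the CAT($0$) space $T'$ it fixes a point $q'(y_0)$ at bounded distance from $q(y_0)$; extending $G$-equivariantly gives the desired equivariant quasi-isometry $q':Y\to T'$.

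It remains to establish continuity of $\sigma$, for which I would invoke Proposition \ref{bbcon}: coboundedness and local boundedness of the $G$-action both transport through the equivariant quasi-isometry $q'$ from $Y$ to $T'$, so what must be checked is that $\Isom(T')$ contains no nontrivial bounded-displacement isometry. To verify this, let $f\in\Isom(T')$ have displacement bounded by a constant $C$. A hyperbolic isometry of a tree has displacement $2d(\cdot,A)+\ell$, where $A$ is its axis and $\ell>0$ the translation length, hence unbounded; so $f$ must be elliptic and fixes some vertex or edge midpoint $v_0$. If $f$ permuted two edges incident to $v_0$ non-trivially, then for a vertex $u$ at distance $k$ from $v_0$ along one of these edges one would have $d(u,f(u))\ge 2k$, contradicting bounded displacement as $k\to\infty$; so $f$ preserves each edge at $v_0$ set-wise, and hence fixes every neighbour of $v_0$. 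Iterating gives $f=\textnormal{id}$. The main obstacle in this strategy is the Mosher--Sageev--Whyte quasi-action theorem, but it is already available in the literature; once it is granted, all remaining ingredients (Lemma \ref{qadico}, the centering argument, and Proposition \ref{bbcon}) are routine and directly parallel the symmetric space case.
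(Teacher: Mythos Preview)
Your proposal is correct and follows essentially the same route as the paper: transport the action to a ULSL quasi-action on $T$ via Lemma \ref{qadico}, invoke \cite{MSW} to obtain an isometric action on a bounded valency tree $T'$ with a quasi-equivariant quasi-isometry, upgrade to genuine equivariance using the centre lemma as in the proof of Theorem \ref{kl9}, and finally apply Proposition \ref{bbcon} for continuity after checking that $T'$ has no nontrivial bounded-displacement isometry. The only noteworthy difference is that the paper, instead of citing \cite{MSW} as a black box, sketches an alternative proof of the discrete statement using the Thomassen--Woess accessibility viewpoint together with Theorem \ref{cqia}; and for the bounded-displacement hypothesis the paper simply appeals to the fact that $T'$ has at least three ends, whereas you give a direct elementary argument (which is fine, though your phrasing ``at distance $k$ along one of these edges'' should be read as ``at distance $k$ in the component of $T'\smallsetminus\{v_0\}$ containing that edge'', and one should note that such a component is unbounded since $T'$ is QI to a bushy tree).
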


\begin{remark}
It is not hard to check that a bounded valency non-bushy tree $T$ quasi-isometric to a metric space with a cobounded isometry group, is either bounded or contains a cobounded bi-infinite geodesic.
\end{remark}

\begin{remark}
Theorem \ref{mMSW} is not true when $T$ is a linear tree (the Cayley graph of $(\Z,\{\pm 1\})$). Indeed, taking $G$ to be $\R$ acting on $\R$ (which is quasi-isometric to $T$), there is no isometric cobounded action on any tree quasi-isometric to $\Z$. Indeed this action would preserve a unique axis, while there is no nontrivial homomorphism from $\R$ to $\Isom(\Z)$. In this case, we can repair the issue by allowing actions on $\R$-trees. But here is a second more dramatic counterexample: the universal covering $G=\widetilde{\SL}_2(\R)$ (endowed with either the discrete or Lie topology) admits a locally bounded isometric action on the Cayley graph of $(\R,[-1,1])$ (see \cite[Example 3.12]{CCMT}), but admits no isometric cobounded action on any $\R$-tree quasi-isometric to $\Z$, because by the same argument, this action would preserve an axis, while there is no nontrivial homomorphism of abstract groups $\widetilde{\SL}_2(\R)\to\Isom(\R)$.
\end{remark}

\begin{proof}[On the proof of Theorem \ref{mMSW}]
The statement of \cite[Theorem 1]{MSW} is in terms of quasi-actions, see the conventions in \S\ref{qapt}. It reads: {\em Let $G$ be a discrete group. Suppose that $G$ has a ULSL quasi-action on a bushy tree $T$ of bounded valency. Then there exists a tree $T'$ of bounded valency with a continuous isometric action of $G$ and a quasi-equivariant quasi-isometry $T\to T'$.}

By Lemma \ref{qadico}, it can be translated as: {\em (*) Let $G$ be a discrete group. Consider a locally bounded, isometric, cobounded action of $G$ on a metric space $Y$ quasi-isometric to a bushy tree $T$ of bounded valency. Then there exists a tree $T'$ of bounded valency with an isometric action of $G$ and a quasi-equivariant quasi-isometry $T\to T'$.}

To get the statement of Theorem \ref{mMSW}, apply (*) to the underlying discrete group; the action on $T'$ we obtain is then locally bounded. Since $T'$ has at least 3 ends, the only isometry with bounded displacement is the identity, so we deduce by Lemma \ref{bbcon} that the action on $T'$ is continuous.

Let us now sketch the proof of (*) (which is the discrete case of Theorem \ref{mMSW}), as the authors of \cite{MSW} did not seem to be aware of the Thomassen-Woess approach and repeat a large part of the argument.

Start from the hypotheses of (*). A trivial observation is that we can suppose $Y$ to be a connected graph. Namely, using that $T$ is geodesic, there exists $r$ such that if we endow $Y$ with a graph structure by joining points at distance $\le r$ by an edge, then the graph is connected and the graph metric and the original metric on $Y$ are quasi-isometric through the identity. The difficulty is that $Y$ need not be of finite valency.

The construction of an isometric action of $G$ on a connected {\em finite valency} graph $Z$ quasi-isometrically quasi-conjugate to the original quasi-action is done in \cite[\S 3.4]{MSW} (this is the part where it is used that the tree is bushy). It is given by a homomorphism $G\to\Isom(Z)$. By Theorem \ref{cqia}, $\Isom(Z)$ has a continuous proper cocompact action on a tree $T'$. Note that the actions of $\Isom(Z)$ on both $Z$ and $T'$ are quasi-isometrically conjugate to the left action of $\Isom(Z)$ on itself, so there exists a quasi-isometry $Z\to T'$ which is quasi-equivariant with respect to the $\Isom(Z)$-action, and therefore is quasi-equivariant with respect to the $G$-action. Finally, to get equivariance instead of quasi-equivariance, we use the same argument (based on the center lemma, which holds in $T'$) as in the proof of Theorem \ref{kl9}.
\end{proof}

\subsection{Accessibility of compactly presented groups}\label{acp}

Recall that a CGLC-group is {\em compactly presented} if has a presentation with a compact generating set and relators of bounded length. If it has a compact open subgroup, it is equivalent to require that $G$ has a continuous proper cocompact combinatorial action on a locally finite simply connected simplicial 2-complex (see Proposition \ref{cara}).

Let us also use the following weaker variant: $G$ is of type $\textnormal{FP}_2$ mod $2$ if 
$G$ is quotient of a compactly presented LC-group by a discrete normal subgroup $N$ such that $\Hom(N,\Z/2\Z)=0$. If $G$ has a compact open subgroup, it is equivalent (see Proposition \ref{cara2}) to require that $G$ has a continuous proper cocompact combinatorial action on a locally finite connected 2-complex $X$ such that $H^1(X,\Z/2\Z)=0$.

By Lemma \ref{hough}, if $G_0$ is not compact then $G$ is (at most two)-ended and thus is accessible by Corollary \ref{2ended}. Otherwise if $G_0$ is compact (i.e., $G$ has a compact open subgroup), the restatement of the definition shows that as a consequence of Theorems \ref{gdun} and \ref{km}, we have the following theorem, whose discrete version was the main theorem in \cite{du85}.

\begin{thm}
If $G$ is a compactly generated locally compact group of type $\textnormal{FP}_2$ mod 2 (e.g., $G$ is compactly presented) then $G$ is accessible. \qed
\end{thm}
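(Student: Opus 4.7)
The plan is to use the dichotomy the excerpt has already set up: the statement reduces to two cases depending on whether the identity component $G_0$ is compact.

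First I would dispose of the case where $G_0$ is non-compact. By Lemma \ref{hough}, $G$ then has at most two ends, and Corollary \ref{2ended} already places $G$ on the accessible side of the definition (Definition \ref{gac}): if $G$ is one-ended it is accessible by the trivial ``single vertex group'' graph of groups, and if $G$ is two-ended then in particular it admits a continuous proper transitive isometric action on $\R$, which is one of the two alternatives explicitly built into Definition \ref{gac}.

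Then I would handle the main case, where $G_0$ is compact, i.e.\ $G$ has a compact open subgroup. Here I would invoke the restatement of the ``$\textnormal{FP}_2$ mod 2'' hypothesis provided by Proposition \ref{cara2}: under that hypothesis $G$ admits a continuous proper cocompact combinatorial action on a locally finite simplicial $2$-complex $X$ with $H^1(X,\Z/2\Z)=0$. Dunwoody's theorem in the form stated as Theorem \ref{gdun} then applies directly and tells us that $X$ is diameter-accessible. Since diameter-accessibility is a quasi-isometry invariant among geodesic metric spaces, the $1$-skeleton $X^{(1)}$ of $X$ — which is a bounded valency connected graph on which $G$ acts continuously, properly and cocompactly — is also diameter-accessible.

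At this point I would close the argument by applying Theorem \ref{km} (Kr\"on--M\"oller) to the action of $G$ on $X^{(1)}$: the implication (\ref{ac3})$\Rightarrow$(\ref{ac1}) yields that $G$ is accessible in the sense of Definition \ref{gac}, as desired. The only genuinely non-routine ingredient is the translation from the algebraic ``$\textnormal{FP}_2$ mod 2'' (or compactly presented) hypothesis to the existence of a proper cocompact action on a locally finite $2$-complex with vanishing mod $2$ first cohomology, but this translation is exactly the content of Proposition \ref{cara2} (resp.\ Proposition \ref{cara}), which we are allowed to use. Everything else is a direct chaining of Lemma \ref{hough}, Corollary \ref{2ended}, Theorem \ref{gdun}, and Theorem \ref{km}.
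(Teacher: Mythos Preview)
Your proposal is correct and follows essentially the same approach as the paper: the paper's own argument is the short paragraph immediately preceding the theorem, which splits on whether $G_0$ is compact, invokes Lemma~\ref{hough} and Corollary~\ref{2ended} in the non-compact case, and in the compact case combines the restatement of the $\textnormal{FP}_2$ mod $2$ hypothesis (Proposition~\ref{cara2}) with Theorems~\ref{gdun} and~\ref{km}. Your write-up simply makes explicit the passage to the $1$-skeleton via the quasi-isometry invariance of diameter-accessibility, which is exactly the intended chaining.
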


\begin{cor}
Every hyperbolic locally compact group is accessible.\qed
\end{cor}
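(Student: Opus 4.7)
The plan is to reduce the corollary to the immediately preceding theorem by showing that every hyperbolic LC-group $G$ is compactly presented. Since $G$ is compactly generated by the definition of hyperbolicity in \S\ref{tyh}, it suffices to exhibit, at a suitable scale, a continuous proper cocompact combinatorial action on a locally finite simply connected 2-complex; once this is done, the preceding theorem applies and yields accessibility.

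First I would invoke the fact recalled in \S\ref{s_p} that a hyperbolic LC-group admits a continuous, proper, cocompact isometric action on a proper geodesic hyperbolic metric space $X$. The standard construction is then the Rips complex $P_d(X)$ built on a suitably chosen $G$-equivariant discrete net $N\subset X$: vertices are the points of $N$ and simplices are finite subsets of $N$ of diameter at most $d$. For $d$ large compared to the hyperbolicity constant $\delta$, the Rips complex is contractible (hence simply connected), by the classical argument that geodesic triangles in $X$ are $\delta$-thin, which provides the combinatorial fillings of closed edge-paths needed to kill $\pi_1$. Because $N$ is discrete and locally finite (the number of points of $N$ in any ball is bounded uniformly in its center, by cocompactness and properness of the action on $X$), the complex $P_d(X)$ is locally finite, and the $G$-action by simplicial automorphisms induced from the isometric action on $X$ is continuous, proper and cocompact.

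Having this action, $G$ is compactly presented (this is the LC-version of the classical statement that hyperbolic groups are finitely presented; we are using exactly the reformulation mentioned just before the theorem, going through the 2-skeleton of $P_d(X)$). In particular $G$ is of type $\textnormal{FP}_2$ mod $2$, so the preceding theorem gives that $G$ is accessible.

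The only non-routine point is the construction of the Rips complex and the verification of its simple connectedness in the LC setting; both are however completely parallel to the discrete case once a $G$-equivariant locally finite net in $X$ is fixed, and the net is supplied by the cocompactness and properness of the action on $X$. No additional input beyond \cite{CCMT} (for the existence of the action on $X$) and the preceding theorem is needed.
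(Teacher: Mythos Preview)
Your overall strategy --- show that a hyperbolic LC-group is compactly presented and then invoke the preceding theorem --- is exactly the paper's (implicit) argument; the corollary carries a bare \qed for this reason.

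There is, however, a genuine technical slip in your Rips-complex implementation. You claim that cocompactness and properness of the $G$-action on $X$ supply a $G$-equivariant discrete (locally finite) net $N\subset X$. This fails whenever the identity component $G_0$ is noncompact: any $G$-invariant subset is a union of $G$-orbits, and a $G$-orbit $G\cdot x\simeq G/G_x$ is discrete in $X$ only if the stabiliser $G_x$ is open; since the action is proper, $G_x$ is compact, so a discrete $G$-orbit forces $G$ to have a compact open subgroup, i.e.\ $G_0$ compact. In particular, for $G=\mathrm{PSL}_2(\R)$ acting on $\mathbf{H}^2_\R$ there is no $G$-equivariant discrete net, hence no locally finite $G$-Rips complex as you describe. (Equivalently: a continuous proper action on a \emph{locally finite} complex forces $G_0$ to act trivially on the vertex set and hence to be compact.)

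The fix is painless and there are two standard options. Either bypass the Rips complex entirely and argue directly on the Cayley graph: hyperbolicity gives a linear isoperimetric inequality, so relators of length bounded by a constant depending only on $\delta$ present $G$, which is the definition of compact presentation and requires no hypothesis on $G_0$. Or split into cases: if $G_0$ is compact, your Rips-complex argument (or Proposition~\ref{cara}) goes through verbatim; if $G_0$ is noncompact, Lemma~\ref{hough} gives that $G$ has at most two ends, hence is accessible by Corollary~\ref{2ended}, and the preceding theorem is not even needed.
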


\begin{prop}\label{cara}
Let $G$ be a locally compact group with $G_0$ compact. Equivalences:
\begin{enumerate}
\item\label{cp} $G$ is compactly presented, that it, it has a presentation with a compact generating set and relators of bounded length;
\item\label{cp3} $G$ has a presentation with a compact generating set and relators of length $\le 3$;
\item\label{scc} $G$ has a continuous proper cobounded combinatorial action on a simply connected simplicial 2-complex $X$.
\item\label{scclf} $G$ has a continuous proper cocompact combinatorial action on a locally finite simply connected simplicial 2-complex $X$.
%\item\label{scclf2} $G$ has a continuous proper cocompact combinatorial action on a locally finite simply connected simplicial 2-complex $X$ with no inversion (i.e.\ for each $g\in G$, the set of $G$-fixed points is a subcomplex) and with a main vertex orbit, in the sense that every $g\in G$ fixing a vertex, fixes a vertex in the main orbit, and if an element is the identity on the main orbit then it is the identity.
\end{enumerate}
\end{prop}
\begin{proof}%[Sketch of proof](\ref{scclf2})$\Rightarrow$
Trivially we have (\ref{scclf})$\Rightarrow$(\ref{scc}) and (\ref{cp3})$\Rightarrow$(\ref{cp}).

(\ref{scc})$\Rightarrow$(\ref{cp3}). Let $A$ be a connected bounded closed subset of $X$
such that the $G$-translates of $A$ cover $X$. Then if $S=\{g\in G\mid gA\cap A\neq\emptyset\}$, then $S$ is compact by properness, and by \cite[Lemma p.~26]{Kos}, $G$ is generated by $S$ and relators of length $\le 3$. So (\ref{cp3}) holds. 

(\ref{cp})$\Rightarrow$(\ref{scclf}). Let $S$ be a compact symmetric generating subset with relators of bounded length; enlarging $S$ if necessary, we can suppose that $S$ contains a compact open subgroup $K$ and that $S=KSK$. The Cayley-Abels graph (Proposition \ref{tdg}) is a graph with vertex set $G/K$ and an unoriented edge from $gK$ to $g'K$ if $g^{-1}g'\in S\smallsetminus K$. This is a $G$-invariant locally finite connected graph structure on $G/K$. Define a simplicial 2-complex structure by adding a triangle every time its 2-skeleton appears. 

Define a simplicial 2-complex structure on the Cayley graph of $G$ with respect to $S$ in the same way. Then the projection $G\to G/K$ has a unique extension between the 2-skeleta, affine in each simplex (possibly collapsing a simplex onto a simplex of smaller dimension). The Cayley 2-complex of $G$ is simply connected, because the relators of length $\le 3$ define the group.  

It remains to check that the simplicial 2-complex on $G/K$ is simply connected. Indeed, given a loop based at 1, we can homotope it to a combinatorial loop on the 1-skeleton. We can lift such a loop to a combinatorial path $(1=g_0,g_1,\dots,g_k)$ on $G$ and ending at the inverse image of 1, namely $K$. If $k=0$ there is nothing to do, otherwise observe that $g_{k-1}\in KS\subset S$. So we can replace $g_k$ by 1, thus the new lifted path is a loop. This loop has a combinatorial homotopy to the trivial loop, which can be pushed forward to a combinatorial homotopy of the loop on $G/K$. So the  simplicial 2-complex on $G/K$ is simply connected. Thus (\ref{scclf}) holds.
\end{proof}

\begin{lem}\label{caraf}
%\item\label{scclf2} 
Let $G$ be a compactly presented locally compact group with $G_0$ compact. Then $G$ has a continuous proper cocompact combinatorial action on a locally finite simply connected simplicial 2-complex $X$ with no inversion (i.e.\ for each $g\in G$, the set of $G$-fixed points is a subcomplex) and with a main vertex orbit, in the sense that every $g\in G$ fixing a vertex, fixes a vertex in the main orbit, and if an element is the identity on the main orbit then it is the identity. Moreover, the main vertex orbit can be chosen as $G/K$ for any choice $K$ of compact open subgroup.
\end{lem}
\begin{proof}[Sketch of proof]
The proof is essentially a refinement of the proof of
 (\ref{cp})$\Rightarrow$(\ref{scclf}) of Proposition \ref{cara}, so we just indicate how it can be adapted to yield the lemma. We start with the same construction, but using oriented edges (without self-loops), and then we add, each time we have 3 oriented edges $(e_1,e_2,e_3)$ forming a triangle with 3 distinct vertices (with compatible orientations, i.e.\ the target of $e_1$ is the source of $e_2$, etc.), we add a triangle. Because of the double edges, this is not yet simply connected; so for any two adjacent vertices $x,y$ we glue two bigons indexed by $(x,y)$ and $(y,x)$ (note that the union of these 2 bigons is homeomorphic to a 2-sphere). The resulting complex is simply connected. Then the action of $G$ has the required property. The problem is that because of bigons, we do not have a simplicial complex. To solve this, we just add vertices at the middle of all edges and all bigons, split the bigons into 4 triangles by joining the center to all 4 vertices (thus pairs of opposite bigons now form the 2-skeleton of a octahedron), and split the original triangles into 4 triangles by joining the middle of the edges. We obtain a simplicial 2-complex; the cost is that we have several vertex orbits, but then $G/K$ is the ``main orbit" in the sense of the proposition.
\end{proof}

\begin{prop}\label{cara2} 
Let $G$ be a locally compact group with $G_0$ compact. Let $A$ be a discrete abelian group. Equivalences:
\begin{enumerate}
\item\label{cna} $G$ is isomorphic to a quotient $\tilde{G}/N$ with $\tilde{G}$ compactly presented, $N$ a discrete normal subgroup of $\tilde{G}$, and $\Hom(N,A)=0$;
\item\label{lfh} $G$ has a continuous proper cocompact combinatorial action on a locally finite simplicial 2-complex $X$ with $H^1(X,A)=0$.
\end{enumerate}
\end{prop}
\begin{proof}
Suppose (\ref{cna}) and let us prove (\ref{lfh}). We can suppose $G_0=\{1\}$.
Choose $K$ to be a compact open subgroup with $K\cap N=\{1\}$. Consider an action $\alpha$ of $\tilde{G}$ on a 2-complex $X$ as in Lemma \ref{caraf} and main vertex orbit $G/K$. Then the action of $N$ on $X$ is free: indeed, if an element of $N$ fixes a point, then it fixes a vertex in the main orbit and hence it has some conjugate in $N\cap K$, thus is trivial.

%Set $W=N\cap\Ker(\alpha)$. Then the action of $N/W$ on $X$ is free: indeed, if an element of $N$ fixes a point, then it fixes a vertex in the main orbit and hence fixes all vertices in the main orbit, hence acts as the identity.

So $G=\tilde{G}/N$ acts continuously properly cocompactly on $N\backslash X$, which is a simplicial 2-complex as $N$ acts freely on $X$. Now we have, fixing an implicit base-point in $X$
\begin{equation}\label{h1}H^1(X,A)\simeq\Hom(\pi_1(N\backslash X),A)\end{equation}
Since $\pi_1(N\backslash X)\simeq N$, we deduce that $H^1(X,A)=0$.

Conversely suppose (\ref{lfh}), denote by $\alpha$ the action of $G$ on $X$ and $W$ its kernel. Fixing a base-vertex in $X$, let $\tilde{X}$ be the universal covering. Let $H$ be the group of automorphisms of $\tilde{X}$ that induce an element of $\alpha(G)$. It is a closed subgroup and is cocompact on $\tilde{X}$; its projection $\rho$ to $\alpha(G)$ is surjective. Let
\[\tilde{G}=\{(g,h)\in G\times H\mid \alpha(g)=\rho(h)\}\subset G\times H\] be the fibre product of $G$ and $H$ over $\alpha(G)$. Then it contains $W\times\{1\}$ as a compact normal subgroup, and the quotient is canonically isomorphic to $H$. The kernel of the projection $\tilde{G}\to G$ is equal to $\{1\}\times\Ker(\rho)$, which is discrete and consists of deck transformations of the covering $\tilde{X}\to X$ and is in particular isomorphic to $\pi_1(X)$. Again using (\ref{h1}), we obtain that $\Hom(N,A)=\{0\}$.
\end{proof}

%just characterizes CGLC-groups among LC-groups with a compact identity component. 

\begin{remark}
In case $A$ is the trivial group, both (\ref{cna}) and (\ref{lfh}) of 
Proposition \ref{cara2} hold.
At the opposite, keeping in mind that any nontrivial abelian group $B$ satisfies $\Hom(B,\mathbf{Q}/\Z)\neq 0$, the case $A=\mathbf{Q}/\Z$ of Proposition \ref{cara2} characterizes locally compact groups with compact identity component that are of type $\textnormal{FP}_2$; for $A=\Z/2\Z$ it characterizes CGLC-groups with compact identity component that are of type $\textnormal{FP}_2$ mod~2.
\end{remark}

\subsection{Maximal 1-ended subgroups}\label{ss_m1e}

In a locally compact group, let us call an M1E-subgroup a compactly generated, 1-ended open subgroup that is maximal among compactly generated, 1-ended open subgroups.

\begin{lem}\label{prefa}
Let $G$ be a 1-ended CGLC-group. Then every inversion-free continuous action of $G$ on a nonempty tree with compact edge stabilizers has a unique fixed vertex.
\end{lem}
\begin{proof}
We can suppose that the action is minimal. If the tree is not reduced to a singleton, the action induces a nontrivial decomposition of $G$ as a Bass-Serre fundamental group of a graph of groups with compact open edge stabilizers, contradicting that $G$ is 1-ended. So $G$ fixes a vertex $v$; if it fixes another vertex $v'$, then it fixes every edge in between, contradicting that edge stabilizers are compact.
\end{proof}

The following lemma is straightforward.
\begin{lem}\label{m1ecq}
Let $G$ be an LC-group, and $p:G\to G/W$ the quotient by some compact normal subgroup. Then for every M1E-subgroup $H$ of $G$, $p(H)$ is an M1E-subgroup of $G/W$, and for every M1E-subgroup $L$ of $G/W$, $p^{-1}(H)$ is an M1E-subgroup of $G$.\qed
\end{lem}

\begin{lem}\label{intm1e}
Let $G$ be a compactly generated accessible LC-group and $H$ a closed cocompact subgroup. Then
\begin{enumerate}[(a)]
\item\label{gvh} for every M1E-subgroup $L$ of $G$, the intersection $L\cap H$ is an M1E-subgroup of $H$;
\item\label{hvg} every M1E-subgroup of $H$ is contained in a unique M1E subgroup of $G$.
\end{enumerate}
\end{lem}
\begin{proof}(\ref{gvh}) Define $N=L\cap H$. Since $L$ is open and $H$ is cocompact, the intersection $N$ is cocompact in $L$. So $N$ is 1-ended. By Proposition \ref{m1e}, $N$ is contained in an M1E-subgroup $P$ of $H$. Let $T$ be a tree on which $G$ acts continuously with compact edge stabilizers. Then by Lemma \ref{prefa}, each of $P$, $L$, and $N$ fix a unique vertex in $T$, and since $P\supset N\subset L$, this unique vertex $v$ is the same for all. Then $L\subset G_v\supset P$, where $G_v$ is the stabilizer of $v$. Moreover, $G_v$ is 1-ended and since $L$ is M1E, we deduce that $G_v=L$. Thus $P\subset L$. Hence $P\subset L\cap H=N$, and therefore $P=N$.

(\ref{hvg}). Let $L$ be a M1E subgroup of $H$. By Proposition \ref{m1e}, $L$ is contained in a M1E-subgroup $M$ of $G$. Since $H$ is cocompact in $G$ and $M$ is open, the intersection $H\cap M$ is cocompact in $M$; in particular, it is 1-ended and open in $H$. Since $L\subset H\cap M$ is an M1E-subgroup of $G$, we deduce that $L=H\cap M$. Hence $L$ is cocompact in $M$, showing the existence. To prove the uniqueness, let $M'$ be another M1E-subgroup of $G$ containing $L$. As in the previous paragraph, using the inclusions $M\supset L\subset M'$, using a tree with a continuous $G$-action with compact edge stabilizers and 1-ended vertex stabilizers, and applying Lemma \ref{prefa}, we obtain that all fix a common vertex $v$, and since $M\subset G_v\supset M'$ and $M,M'$ are M1E-subgroups, we obtain $M=G_v=M'$.
\end{proof}

\begin{prop}\label{m1e}
Let $G$ be an accessible LC-group and $H$ a closed, 1-ended compactly generated subgroup. Then $H$ is contained in a M1E-subgroup.
Moreover, $H$ is an M1E-subgroup if and only if there is a continuous inversion-free action of $G$ on a tree, with compact edge stabilizers, such that $H$ is the stabilizer of some vertex.
\end{prop}
\begin{proof}
Assume that $H$ of some vertex stabilizer in some inversion-free continuous action on a tree with compact edge stabilizers. Then $H$ is open. Let $L$ be a 1-ended subgroup containing $H$. By Lemma \ref{prefa}, $L$ fixes some unique vertex $v'$. By uniqueness of the vertex fixed by $H$ (again Lemma \ref{prefa}), we have $v=v'$. Since $H$ is the stabilizer of $v$, we deduce that $H=L$. So $H$ is a ME1-subgroup, proving one implication of the second statement.

Now start again with the assumptions of the proposition. Since $G$ is accessible, it admits a continuous action on a tree $T$ with no inversion, with compact edge stabilizers and (at most 1)-ended edge stabilizers. By Lemma \ref{prefa}, $H$ is contained in a vertex stabilizer $L$. Then $L$ is not compact, hence it is 1-ended. From the previous paragraph of this proof, we know that $L$ is a M1E-subgroup, which proves the first statement. If moreover $H$ is M1E, we deduce $H=L$, which establishes the converse implication of the second statement.
\end{proof}

%%%%%%%%%%%%%%%%%%%%%%%%%%%%%%%%%%%%%%%%%%%%%%%%%
%%%%%%%%%%%%%%%%%%%%%%%%%%%%%%%%%%%%%%%%%%%%%%%%%
%%%%%%%%%%%%%%%%%%%%%%%%%%%%%%%%%%%%%%%%%%%%%%%%%
%%%%%%%%%%%%%%%%%%%%%%%%%%%%%%%%%%%%%%%%%%%%%%%%%
%%%%%%%%%%%%%%%%%%%%%%%%%%%%%%%%%%%%%%%%%%%%%%%%%
%%%%%%%%%%%%%%%%%%%%%%%%%%%%%%%%%%%%%%%%%%%%%%%%%
\section{Commable groups, and commability classification of amenable hyperbolic groups}
\label{sec:com}
%%%%%%%%%%%%%%%%%%%%%%%%%%%%%%%%%%%%%%%%%%%%%%%%%

Let us recall some definitions and results from \cite{Co13}.

For a homomorphism between LC-groups, write {\em copci} as a (pronounceable) shorthand for {\em continuous proper with cocompact image}.

We say that a hyperbolic LC-group is {\em faithful} if it has no nontrivial compact normal subgroup. A simple observation \cite[Lemma 3.6(a)]{CCMT} shows that any hyperbolic LC-group $G$ has a maximal compact normal subgroup $W(G)$; in particular $G/W(G)$ is faithful. Note that if $G$ is faithful then $G^\circ$ is a connected Lie group.

\subsection{Generalities}
Let us say that two LC-groups $G,H$ are {\em commable} if there exist an integer $k$ and a sequence of copci homomorphisms

\begin{equation} G=G_0\moins G_1\moins G_2\moins\dots\moins G_k=H,\label{ggih}\end{equation}
where each sign $\moinss$ denotes an arrow in either direction.

We sometimes use fancy arrows such as $\nearrow$, $\nwarrow$ to make it more readable. For instance, if there are three copci homomorphisms $G\leftarrow G_1\to G_2\leftarrow H$, we can write that $G$ is commable to $H$ through $\nwarrow\nearrow\nwarrow$, or that there is a commability $G\nwarrow\nearrow\nwarrow H$.

More generally, if $\mathcal{D}$ is a class of locally compact groups and $G,H\in\mathcal{D}$, we say that $G,H$ are commable within the class $\mathcal{D}$ if the same condition holds with the additional requirement that the LC-groups $G_i$ in (\ref{ggih}) belong to $\mathcal{D}$.

We will especially consider for $\mathcal{D}$ the class of focal (resp.\ faithful, resp.\ focal and faithful) hyperbolic LC-groups.

\begin{remark}\label{bs}
To be compactly generated is invariant by commability among LC-groups. In particular, any two commable CGLC-groups are quasi-isometric. The converse is not true, even for finitely generated groups. Examples are $\Gamma_1\ast\Z$ and $\Gamma_2\ast\Z$, when $\Gamma_1,\Gamma_2$ are cocompact lattices in $\SL_3(\R)$ that are not abstractly commensurable, according to an unpublished observation of Carette and Tessera (see \S\ref{noncommable}).

Here is a possible other example, of independent interest. Let $\BS(m,n)$ be the Baumslag-Solitar group, defined by the presentation $\langle t,x\mid tx^mt^{-1}=x^n\rangle$. Then, by a result of Whyte \cite{Wh01}, the groups $\BS(2,3)$ and $\BS(3,5)$ are quasi-isometric; he also makes the simple observation that these groups do not have isomorphic finite index subgroups (i.e., since they are torsion-free, are not commable within discrete groups). I do not know whether $\BS(2,3)$ and $\BS(3,5)$ are commable, but I expect a negative answer. More generally, it would be of great interest to determine the commability and quasi-isometry classes (within locally compact groups) of any Baumslag-Solitar group $\BS(p,q)$. Their external quasi-isometry classification is addressed, within discrete groups, in \cite{MSW2}. 
\end{remark}

%\begin{remark}
%If $\Gamma$ is a {\em noncompact} lattice in a simple Lie group with finite center and rank $\ge 2$, it follows from \cite[Theorem A]{Fur} that every locally compact group in the commability class of $\Gamma$ is compact-by-discrete. Indeed, \end{remark}

\begin{remark}
If $G$ and $H$ are commable, we can define their commability distance as the least $k$ such that there exists a commability such as in (\ref{ggih}). I do not know if this can be greater than 4. Anyway, this notion forgets the direction of the arrows; for instance, to have a commensuration through $\nearrow\nwarrow$ or $\nwarrow\nearrow$ are very different relations.
\end{remark}

\begin{lem}[Proposition 2.7 in \cite{Co13}]\label{fdag}
Any copci homomorphism $f:G_1\to G_2$ between LC-groups satisfies $f^{-1}(W(G_2))=W(G_1)$. In particular, if $W(G_1)$ is compact (e.g., if $G_1$ is hyperbolic), $f$ thus factors through an injective copci homomorphism 
$f':G_1/W(G_1)\to G_2/W(G_2)$, which is injective.\qed
\end{lem}

\begin{cor}\label{cfcckp}
Let $\mathcal{F}$ denote the class of focal hyperbolic LC-groups. Given faithful hyperbolic LC-groups $G_1$ and $G_2$,
\begin{itemize}
\item $G_1$ and $G_2$ are commable if and only they are commable within faithful hyperbolic groups;
\item assuming $G_1$ and $G_2$ focal, $G_1$ and $G_2$ are commable within focal hyperbolic groups if and only they are commable within faithful focal hyperbolic groups.\qed
\end{itemize}
\end{cor}

\begin{remark}
We can define {\em strict commability} in the same fashion as commability, but only allowing {\em injective} copci homomorphisms. Since a copci homomorphism between faithful focal hyperbolic LC-groups is necessarily injective, Corollary \ref{cfcckp} shows that between faithful hyperbolic LC-groups, commability and strict commability are the same, and more generally that hyperbolic LC-groups $G_1,G_2$ are commable if and only if $G_1/W(G_1)$ and $G_2/W(G_2)$ are strictly commable. Similar consequences hold for commability within focal hyperbolic LC-groups.
\end{remark}

\subsection{Non-commable quasi-isometric discrete groups}\label{noncommable}

Let us abbreviate totally disconnected to TD; thus compact-by-TD LC-groups are the same as Hausdorff topological groups with a compact open subgroup. Also, recall that maximal 1-ended (M1E) subgroups were introduced in \S\ref{ss_m1e}.

\begin{lem}\label{com1e}
Let $G,H$ be compactly generated, accessible locally compact groups with infinitely many ends. Assume that $G$ and $H$ are commable. Then every M1E-subgroup of $G$ is commable within compact-by-TD subgroups to a M1E subgroup of $H$.
\end{lem}
\begin{proof}Since $G$ and $H$ have infinitely many ends, they are compact-by-TD as well as all LC-groups commable to them, and their closed subgroups. The result then follows, by an immediate induction, from Lemmas \ref{m1ecq} and \ref{intm1e}.
\end{proof}

%It is enough to prove the lemma assuming that there is a copci homomorphism $H\to G$ or $G\to H$, and we can suppose that it is either injective or is the quotient by a compact normal subgroup. The latter case being immediate, we concentrate on injective homomorphisms, which we can view as closed cocompact inclusions.
%\begin{itemize}
%\item Suppose that there is a cocompact inclusion $G\subset H$. Let $L$ be a M1E subgroup of $G$. By Proposition \ref{m1e}, $L$ is contained in a M1E-subgroup $M$ of $H$. Since $G$ is cocompact in $H$ and $M$ is open, the intersection $G\cap M$ is cocompact in $M$; in particular, it is 1-ended and open in $G$. Since $L\subset G\cap M$ is an M1E-subgroup of $G$, we deduce that $L=G\cap M$. Hence $L$ is cocompact in $M$. 
%\item Suppose that there is a cocompact inclusion $H\subset G$. Let $L$ be an M1E-subgroup of $G$. By Lemma \ref{intm1e}, $L\cap H$, which is cocompact in $L$, is an M1E-subgroup of $G$. 
%\end{itemize}
%In both cases, we see that $L$ is commable within compact-by-TD-groups to an M1E-subgroup of $H$.

\begin{lem}\label{comadis}
Let $\Gamma$ be a finitely generated group with no nontrivial finite normal subgroup. Assume that every compact-by-TD LC-group quasi-isometric to $\Gamma$ is compact-by-discrete and has a maximal compact normal subgroup.

Then a compact-by-TD LC-group $H$ is commable to $\Gamma$ within compact-by-TD LC-groups if and only if its compact normal subgroup $W$ exists and $H/W$ is commensurable (= strictly commable within discrete groups) to $\Gamma$.
\end{lem}
\begin{proof}
Consider a sequence of copci homomorphisms $\Gamma=G_0\moins G_1\moins\dots\moins G_k=H$ between compact-by-TD LC-groups. When $G$ is an LC-group, let $\mathsf{W}(G)$ denote the union of all compact normal subgroups; when $G$ admits a maximal compact normal subgroup, it is equal to $\mathsf{W}(G)$. Note that $\mathsf{W}(\Gamma)=\{1\}$ by the first assumption. Then any copci homomorphism $U\to V$ maps $\mathsf{W}(U)$ into $\mathsf{W}(V)$ (this follows from Proposition \ref{fdag}). It follows that the above sequence induces a sequence of homomorphisms $\Gamma\moins G_1/\mathsf{W}(G_1)\moins\dots\moins G_k/\mathsf{W}(G_k)=H/\mathsf{W}(H)$, which are copci homomorphisms since $\mathsf{W}(G_i)$ is compact for all $i$, by the assumptions. All these groups being discrete, this means that $\Gamma$ and $H/\mathsf{W}(H)$ are commensurable.
\end{proof}

\begin{example}[Carette-Tessera]
Let $S$ be a noncompact connected semisimple Lie group with trivial center. Let $\Gamma,\Lambda$ be non-commensurable cocompact lattices in $S$ (this exists unless $S$ is isomorphic to the product of a compact group with $\SL_2(\R)$). Then $\Gamma\ast\Z$ and $\Lambda\ast\Z$ are quasi-isometric but not commable, as we now show.

A consequence of Theorem \ref{cqi} is that every LC-group quasi-isometric to $S$ is compact-by-Lie. In particular, every compact-by-TD LC-group quasi-isometric to $S$ is compact-by-discrete. Accordingly, the assumption of Lemma \ref{comadis} is satisfied by $\Gamma$, and it follows this lemma that $\Gamma$ and $\Lambda$ are not commable within compact-by-TD LC-groups. 

Since $S$ is 1-ended, so are $\Gamma$ and $\Lambda$, and therefore these are M1E-subgroups in $\Gamma\ast\Z$ and $\Lambda\ast\Z$. Thus, by Lemma \ref{com1e}, we deduce that $\Gamma\ast\Z$ and $\Lambda\ast\Z$ are not commable.

To show that $\Gamma\ast\Z$ and $\Lambda\ast\Z$ are quasi-isometric, we first need to know that $\Gamma$ and $\Lambda$ are bilipschitz (the result follows by an immediate argument). That they are bilipschitz follows from a result of Whyte \cite{Wh99} asserting that any two quasi-isometric non-amenable finitely generated groups are bilipschitz.
\end{example}

The next three subsections address commability within focal groups. See \S\ref{speci} for the link with commability.

\subsection{Connected type}\label{sct}

We say that a focal hyperbolic group is {\em focal-universal} if it satisfies the following: for every LC-group $H$, the group $H$ is commable to $G$ within focal groups if and only if there is a copci homomorphism $H\to G$.

\begin{thm}[\cite{Co13}]\label{efo}
Let $G$ be a focal hyperbolic LC-group of connected type. Then $G$ is commable to a focal-universal LC-group $\hat{G}$, which thus satisfies: for every LC-group $H$, the group $H$ is commable to $G$ within focal groups if and only if there is a copci homomorphism $H\to \hat{G}$. Moreover, $\hat{G}$ is unique up to topological isomorphism.
\end{thm}

\begin{cor}\label{ckco}
Any two commable focal hyperbolic groups of connected type are commable within focal groups through $\nearrow\nwarrow$.\qed
\end{cor}

%If $G$ is a faithful focal group of connected type and $N=\Nilc(G)$. Let $\check{G}$ be the inverse image of $[B_G]$ in $\Aut(N)$. Call it the purely real core of $G$. Note that if $G$ is focal-universal, then $\check{G}$ is contained in $G$, and is normal and cocompact. 

%\begin{prop}\label{chec}
%Let $G$ be a faithful focal group of connected type. If $G$ is focal-universal, or more generally if $[B_G]\subset B_G$ then $\check{G}$ is the unique purely real Heintze closed cocompact subgroup in $G$. 
%\end{prop}

%(Proposition \ref{chec} also follows from results of Gordon and Wilson, see Remark \ref{gw}.)

Focal-universal groups of connected type give a canonical set of representatives of commability classes of focal hyperbolic LC-groups of connected type. However, another canonical set of representatives, sometimes more convenient (e.g.\ in view of the classification small dimension) is given by purely real Heintze groups.

\begin{prop}\label{rhei}
Every focal-universal LC-group of connected type has a unique closed cocompact subgroup that is purely real Heintze. Every focal hyperbolic group of LC-type is commable to a purely real Heintze group, unique up to isomorphism.\qed
\end{prop}

\begin{example}
Here is (without proof) the classification of purely real Heintze groups in dimension $2\le d\le 4$. For each isomorphy class we give one representative.
% These are the simply connected Lie groups associated to the following Lie algebras:
\begin{itemize}
\item $d=2$: the only group is the affine group $\Carn(\R)=\R\rtimes\R$, where the action is given by $t\cdot x=e^tx$. It is isomorphic to a closed cocompact subgroup in $\Isom(\mathbf{H}^2_\R)$.
\item $d=3$: the groups are the $G_\lambda$ for $\lambda\ge 1$, as well as a certain $G_1^{u}$. All these groups are defined as a semidirect product $\R^2\rtimes\R$. In $G_\lambda$, the action is given by $t\cdot (x,y)=(e^tx,e^{\lambda t}y)$, while in $G_1^u$ it is given by $t\cdot(x,y)=e^t(x+ty,y)$. Note that $G_1=\Carn(\R^2)$ is isomorphic to a closed cocompact subgroup of $\Isom(\mathbf{H}^3_\R)$. 
\item $d=4$. It consists of semidirect products $\R^3\rtimes\R$ and $\mathsf{Hei}_3\rtimes\R$, where $\mathsf{Hei}_3$ is the 3-dimensional Heisenberg group. More precisely, the semidirect products $\R^3\rtimes\R$ consist of the $G_{\lambda,\mu}$ for $1\le \lambda\le\mu$, with action given by $t\cdot (x,y,z)=(e^tx,e^{\lambda t}y,e^{\mu t}z)$, of the $G^{(u)}_\lambda$ ($\lambda>0$) for which the action is given by $t\cdot (x,y,z)=(e^t(x+ty),e^ty,e^{\lambda t}z)$, and of $G^u_1$ for which the action is given by $t\cdot (x,y,z)=e^t(x+ty+t^2z/2,y+tz,z)$. If we use coordinates for the Heisenberg group so that the product is given by $(x,y,z)(x',y',z')=(x+x',y+y',z+z'+xy'-x'y)$ (these are not the standard matrix coordinates!), the corresponding Heintze groups are $H_\lambda$ for $\lambda\ge 1$ with action given by $t\cdot (x,y,z)=(e^tx,e^{\lambda t}y,e^{(1+\lambda)t}z)$, and $H^u_1$ with action given by $t\cdot (x,y,z)=e^t(x+y,y,z)$. Note that $G_{1,1}=\Carn(\R^3)$ is isomorphic to a closed cocompact subgroup of $\Isom(\mathbf{H}^4_\R)$, and $H_1=\Carn(\mathsf{Hei}_3)$ is isomorphic to a closed cocompact subgroup of $\Isom(\mathbf{H}^2_\C)$.
\item For $d=5$, we do not give the full classification, but just mention that the groups occurring are semidirect products $\R^4\rtimes\R$, $(\mathsf{Hei}_3\times\R)\rtimes R$, or $\mathsf{Fil}_4\rtimes\R$, for various contracting actions we do not describe, where $\mathsf{Fil}_4$ is the filiform 4-dimensional Lie group, whose Lie algebra has a basis $(a,e_1,e_2,e_3)$ so that $[a,e_i]=e_{i+1}$ for $i=1,2$ and all other brackets between basis elements vanish. The Carnot groups $\Carn(\mathsf{Hei}_3\times\R)$ and $\Carn(\mathsf{Fil}_4)$ are the smallest examples of Carnot groups that are not isomorphic to closed cocompact groups of isometries of rank 1 symmetric spaces of noncompact type.
\item For $d'\le 6$, there are finitely many isomorphism classes of simply connected Carnot-gradable nilpotent Lie groups of dimension $d'$ (namely 1,1,2,3,7,21 isomorphism classes for $d'=1,\dots,6$), while for $d'\ge 7$ there are continuously many. Hence for $d\le 7$ there are finitely many isomorphism classes of purely real Heintze groups of Carnot type, and continuously many for $d\ge 8$.
\end{itemize}
\end{example}

%RENOMMER EN PROP

%\begin{cor}\label{c_comh}
%Every focal hyperbolic group of LC-type is commable to a purely real Heintze group, unique up to isomorphism.\qed
%\end{cor}

\subsection{Totally disconnected type}\label{tdty}

Define, for every integer $m\ge 2$, $\FT_m$ as the stabilizer of a given boundary point in the automorphism group of an $(m+1)$-regular tree.

If $G$ is any locally compact group having exactly two (opposite) continuous homomorphisms onto $\Z$ (e.g., any focal hyperbolic group not of connected type), for $n\ge 1$ we denote by $G^{[n]}$ the inverse image of $n\Z$ by any of these homomorphisms.

%It has a homomorphism onto $\Z$, uniquely defined up to the sign; let $\FT_{m,n}$ be the inverse image of $n\Z$.

By {\em non-power} integer we mean an integer $q\ge 1$ that is not an integral proper power of any integer (thus excluding $4,8,9,16,25,27,32,36,49\dots$).

%\begin{lem}

%\begin{lem}
%Given focal hyperbolic LC-groups of totally disconnected type $G_1,G_2$, the following are equivalent:
%\begin{enumerate}
%\item There exists a focal hyperbolic LC-group $G$ and copci homomorphisms $G_1\to G\leftarrow G_2$; 
%\item There exists a hyperbolic LC-group $G$ and copci homomorphisms $G_1\to G\leftarrow G_2$;
%\item $\Delta_{G_1}$ and $\Delta_{G_2}$ have the same range.
%\end{enumerate}
%\end{lem}

An easy observation (Proposition \ref{tdck}) is that any two focal hyperbolic LC-groups $G_1,G_2$ of totally disconnected type are always commable. However, they are not always commable within focal groups:

\begin{prop}[\cite{Co13}]\label{tdckfo}
Let $G_1,G_2$ be focal hyperbolic LC-groups of totally disconnected type. The following are equivalent:
\begin{enumerate}
\item\label{fcck1} $G_1$ and $G_2$ are commable within focal groups;
\item\label{fcck4} $\Delta(G_1)$ and $\Delta(G_2)$ are commensurable subgroups of $\R_+$ (i.e.\ $\Delta(G_1)\cap\Delta(G_2)$ has finite index in both).
\item\label{fcck5} There exists a non-power integer $q\ge 2$ such that $\Delta(G_1)$ and $\Delta(G_2)$ are both contained in the multiplicative group $\langle q\rangle=\{q^n:n\in\Z\}$;
\item\label{fcck2} there is a commability within focal groups $G_1\nearrow\nwarrow\nearrow\nwarrow G_2$;
\item\label{fcck8} there exists a non-power integer $q\ge 2$ and an integer $n\ge 1$, such that for $i=1,2$ there is a there is a commability within focal groups $G_i\nearrow\nwarrow \FT_{q}^{[n]}$;
\item\label{fcck6} there exists a non-power integer $q\ge 2$ such that for $i=1,2$, there is a commability within focal groups with $G_i\nearrow\nwarrow\nearrow \FT_q$;
\item\label{fcck3} there is a commability within focal groups $G_1\nwarrow\nearrow\nwarrow\nearrow G_2$;
\item\label{fcck7} there exists an integer $m\ge 2$, such that for $i=1,2$ there is a there is a commability within focal groups $G_i\nwarrow\nearrow \FT_m$.
\end{enumerate}
\end{prop}

\subsection{Mixed type}

%If $G$ is a focal hyperbolic LC-group with modular function $\Delta$, define $G^\diamond$ as the identity component of $\Ker(\Delta)$, and $G^\sharp$ as the locally elliptic radical of $G$. 

%VOIR SI $G^\diamond$ ou $G^\sharp$ UTILISE

%\begin{lem}\label{gdgsl}
%Let $f:G_1\to G_2$ be a copci homomorphism between focal hyperbolic. Then $f$ maps $G_1^\diamond$ into $G_2^\diamond$ and $G_1^\sharp$ into $G_2^\sharp$, and the induced homomorphisms $G/G_1^\diamond\to G/G_2^\diamond$ and $G/G_1^\sharp\to G/G_2^\sharp$ are copci.
%\end{lem}

If $G$ is a locally compact group, its elliptic radical $G^\sharp$ is its largest closed elliptic normal subgroup, where elliptic means that every compact subset is contained in a compact subgroup. If $G$ is focal of mixed type, then $G/G^\sharp$ is focal of connected type.

\begin{defn}[\cite{Co13}]\label{dvarpi}Let $G$ be a focal hyperbolic LC-group. Consider the modular functions of $G/G^\circ$ and $G/G^\sharp$; by composition they define homomorphisms $\Delta_G^{\textnormal{td}}$, $\Delta_G^{\textnormal{con}}:G\to\R_+$, which we call restricted modular functions.
Since $\Hom(G,\R)$ is 1-dimensional, if $G$ is not of totally disconnected type then $\Delta_G^{\textnormal{con}}$ is nontrivial and hence there exists a unique $\varpi=\varpi(G)\in\R$ such that $\log\circ\Delta_G^{\textnormal{td}}=\varpi(\log\circ\Delta_G^{\textnormal{con}})$. Because of the compacting element in $G$, necessarily $\varpi(G)\ge 0$, with equality if and only if $G$ is of connected type. If $G$ is of totally disconnected type we set $\varpi(G)=+\infty$.
\end{defn}

\begin{defn}\label{dhpa}Let $H,A$ be a focal hyperbolic LC-groups, $H$ being of connected type with a surjective modular function and $A$ being of totally disconnected type. For $\varpi>0$, define
\[H\stackrel{\varpi}{\times}A=\{(x,y)\in H\times A\mid \Delta_H(x)^\varpi=\Delta_{A}(y)\}.\]
This is a focal hyperbolic LC-group of mixed type, satisfying $\varpi(H\stackrel{\varpi}{\times}A)=\varpi$. If $q\ge 2$ is an integer and $\varpi$ is a positive real number, define in particular
\[H[\varpi,q]=H\stackrel{\varpi}{\times}\FT_q.\]
\end{defn}

\begin{prop}[\cite{Co13}]\label{micom}
Let $G_1,G_2$ be focal hyperbolic LC-groups of mixed type. Equivalences:
\begin{enumerate}
\item\label{mi1} $G_1$ and $G_2$ are commable;
\item\label{gdgsv} the following three properties hold:
\begin{itemize}
\item $G_1/G_1^\circ$ and $G_2/G_2^\circ$ are commable within focal groups;
\item 
$G_1/G_1^\sharp$ and $G_2/G_2^\sharp$ are commable within focal groups\item $\varpi(G_1)=\varpi(G_2)$;\end{itemize}
\item\label{mi3} there is a commability within focal groups $G_1\nearrow\nwarrow\nearrow\nwarrow G_2$;
\item\label{mi5} there exists a non-power integer $q\ge 2$, an integer $n\ge 1$, a focal-universal group of connected type $H$ and a positive real number $\varpi>0$ such that for $i=1,2$ there is a there is a commability within focal groups $G_i\nearrow\nwarrow H[q,\varpi]^{[n]}$;
\item\label{mi6} there exists a non-power integer $q\ge 2$, a focal-universal group of connected type $H$ and a positive real number $\varpi>0$ such that for $i=1,2$, there is a commability within focal groups with $G_i\nearrow\nwarrow\nearrow H[\varpi,q]$;
\item\label{mi4} there is a commability within focal groups $G_1\nwarrow\nearrow\nwarrow\nearrow G_2$;
\item\label{mi7} there exists an integer $m\ge 2$, a focal-universal group of connected type $H$ and a positive real number $\varpi>0$ such that for $i=1,2$ there is a there is a commability within focal groups $G_i\nwarrow\nearrow H[\varpi,m]$.
\end{enumerate}
\end{prop}

In view of Proposition \ref{rhei}, we deduce:

\begin{cor}\label{c_comix}
Every focal hyperbolic LC-group of mixed type $G$ is commable to an LC-group of the form $H[\varpi,q]$, for some purely real Heintze group $H$ uniquely defined up to isomorphism, a unique $\varpi\in\mathopen]0,\infty\mathclose[$, and a unique non-power integer $q$.\qed
\end{cor}

\subsection{Commability between focal and general type groups}\label{speci}\label{sspecial}

%\subsubsection{}

The following lemma from \cite{Co13} is essentially contained in \cite{CCMT}.

\begin{lem}\label{g12f}
Let $G_1,G_2$ be focal hyperbolic LC-groups. Equivalences:
\begin{enumerate}
\item\label{h12} there exists a hyperbolic LC-group $G$ with copci homomorphisms $G_1\to G\leftarrow G_2$. 
\item\label{hf12} there exists a focal hyperbolic LC-group $G$ with copci homomorphisms $G_1\to G\leftarrow G_2$. \qed
\end{enumerate}
\end{lem}
%\begin{proof}Let us prove the nontrivial implication. 
%Assuming (\ref{h12}), if $G$ is focal, there is nothing to prove. Otherwise the  action of $G_i$ on the boundary of $G$ has a unique fixed point $\omega_i$ and is transitive on the complement \cite[Proposition 5.5(c)]{CCMT}; the action of $G$ on $\partial G$ extends the latter action and does not fix $\omega_i$, so $G$ is transitive on $\partial G$. Thus the stabilizers $G_{\omega_1}$ and $G_{\omega_2}$ are closed cocompact focal subgroups of $G$ and are conjugate. So after conjugation in $G$, we can suppose that $G_1$ and $G_2$ are mapped into a single closed cocompact focal subgroup of $G$, proving (\ref{hf12}).
%\end{proof}

\begin{prop}\label{CCKFCCK}
Let $G$ be a focal hyperbolic LC-group not of totally disconnected type. Let $H$ be a locally compact group commable to $G$. Then the following statements hold.
\begin{enumerate}[(a)]
\item\label{kc1} If $H$ is focal, then $G$ and $H$ are commable within focal groups;
\item\label{kc2} if $H$ is non-focal, then there exists a rank~1 symmetric space of noncompact type $X$ and continuous proper compact isometric actions of $G$ and $H$ on $X$.
\end{enumerate}
\end{prop}
\begin{proof}
If $G$ is quasi-isometric to any rank 1 symmetric space $X$ of noncompact type, then by Theorem \ref{cqi} we have copci homomorphisms $G\to\Isom(X)\leftarrow H$; if $H$ is of general type this proves (\ref{kc2}); if $H$ is focal then conjugating by some isometry we can suppose it has the same fixed point in $\partial X$ as $G$, proving (\ref{kc1}).

Assume otherwise that $G$ is not quasi-isometric to any rank 1 symmetric space of noncompact type. To show the result, it is enough to check that the commability class of $G$ consists of focal groups. Otherwise, there is a copci homomorphism $G_1\to G_2$ between groups in the commability class of $G$, such that $G_1$ is focal and $G_2$ is not focal. By Theorem \ref{ccmti}(\ref{c4}), it follows that $G$ is quasi-isometric to some rank 1 symmetric space of noncompact type (which is excluded) or to a tree (which is excluded since $\partial G$ is positive-dimensional).
\end{proof}

\begin{prop}[\cite{Co13}]\label{tdck}
Two focal hyperbolic LC-groups $G_1,G_2$ of totally disconnected type are always commable through $\nearrow\nwarrow\nearrow\nwarrow$, and are commable to a finitely generated free group of rank $\ge 2$ through $\nearrow\nwarrow$.
\end{prop}
\begin{proof}
There is a copci homomorphism $G_i\to\Aut(T_i)$ for some regular tree $T_i$ of finite valency at least 3. For $d$ large enough, $\Aut(T_i)$ ($i=1,2$) contains a cocompact lattice isomorphic to a free group of rank $k=1+d!$. Thus we have copci homomorphisms $G_1\to\Aut(T_1)\leftarrow F_k\to \Aut(T_2)\leftarrow G_2$.
\end{proof}

%\subsubsection{Special groups}\label{sspecial}

\vspace{0.2cm}
\begin{center}{\bf Special groups}\end{center}
\vspace{-0.2cm}
\begin{defn}\label{dspe}
By {\em special} hyperbolic LC-group we mean any CGLC-group commable to both focal and general type hyperbolic LC-groups. A CGLC-group quasi-isometric to a special hyperbolic group is called {\em quasi-special}.
\end{defn}

See Remark \ref{termi_special} for some motivation on the choice of terminology. Taking Theorems \ref{cqi} and \ref{cqia} for granted, we have the following characterizations of special and quasi-special hyperbolic groups.

\begin{prop}\label{carspe}
Let $G$ be a CGLC-group. Equivalences:
\begin{enumerate}
%special hyperbolic;
%commable to both a focal hyperbolic LC-group and a non-focal hyperbolic LC-group;
\item\label{sp4} $G$ is special hyperbolic;
\item\label{sp2} $G$ is commable to the isometry group of a metric space $X$ which is either a rank~1 symmetric space of noncompact type or to a 3-regular tree.
\end{enumerate}

We also have the equivalences:
%If $G$ is focal or $G^\circ$ is noncompact, this is also equivalent to:
\begin{enumerate}
\addtocounter{enumi}{2}
\item\label{qs} $G$ is quasi-special hyperbolic;
\item\label{sp1} $G$ is quasi-isometric to either a rank~1 symmetric space of noncompact type or to a 3-regular tree;
\item\label{sp3} $G$ admits a continuous proper cocompact isometric action on either a rank~1 symmetric space or nondegenerate tree (see \S\ref{qista}) (which can be chosen to be regular if $G$ is focal).
\end{enumerate}
Moreover, if $G$ is quasi-special and not special, then it is of general type and quasi-isometric to a 3-regular tree.
\end{prop}

%The implication (\ref{sp2})$\Rightarrow$(\ref{sp1})   is trivial, and  
 
\begin{proof}
Let us begin by proving the equivalence (\ref{sp4})$\Leftrightarrow$(\ref{sp2}), not relying on Theorems \ref{cqi} and \ref{cqia}.

\noindent (\ref{sp2})$\Rightarrow$(\ref{sp4}) is clear since, denoting by $X$ the space in (\ref{sp2}), it follows that $G$ is commable to both $\Isom(X)$ and $\Isom(X)_\omega$ for some boundary point.

\noindent (\ref{sp4})$\Rightarrow$(\ref{sp2}) We can suppose that $G$ is focal. If $G$ is of totally disconnected type, then by the easy \cite[Proposition 4.6]{CoTe} (which makes $G$ act on its Bass-Serre tree), $G$ has a continuous proper cocompact isometric action on a regular tree and thus (\ref{sp2}) holds. Assume that $G$ is not of totally disconnected type. Let $G=H_0-H_1-H_2\dots-H_k$ be a sequence of copci arrows (in either direction) with $H_k=\Isom(X)$, which is of general type. Let $i$ be minimal such that $H_i$ is of general type. Then $i\ge 1$, $H_{i-1}$ is focal and necessarily the copci arrow is in the direction $H_{i-1}\to H_i$. So $H_i$ is hyperbolic of general type and its identity component is not compact. Thus by \cite[Proposition 5.10]{CCMT}, $H_i$ is isomorphic to an open subgroup of finite index subgroup in the isometry group of a rank~1 symmetric space of noncompact type, proving (\ref{sp2}).

Let us now prove the second set of equivalences.

\noindent (\ref{sp3})$\Rightarrow$(\ref{sp1}) is immediate, in view of Lemma \ref{tqit}.

%the only non-trivial fact is that any nondegenerate tree with a cocompact isometry group is quasi-isometric to the 3-regular tree

\noindent (\ref{sp1})$\Rightarrow$(\ref{qs}) let $X$ be the space as in (\ref{sp1}); then $\Isom(X)$ is special since it is of general type and the stabilizer of a boundary point is focal and cocompact. 

\noindent (\ref{qs})$\Rightarrow$(\ref{sp3}): if $G$ is quasi-special, then it is quasi-isometric to a space as in (\ref{sp2}), so (\ref{sp3}) is provided by Theorems \ref{cqi} and \ref{cqia}, except the regularity of the tree in the focal case, in which case we can then invoke the easy \cite[Proposition 4.6]{CoTe} (which makes $G$ act on its Bass-Serre tree). 

For the last statement, observe that (\ref{sp3})$\Rightarrow$(\ref{sp2}) holds in case in (\ref{sp3}) we have a symmetric space or a regular tree.
\end{proof}

\begin{remark}In the class of hyperbolic LC-groups quasi-isometric to a non-degenerate tree (which is a single quasi-isometry class, described in Corollary \ref{cqiab}), there is a ``large" commability class, including
\begin{enumerate}
\item
all focal hyperbolic groups of totally disconnected type (by Proposition \ref{tdck});
\item
all discrete groups (i.e., non-elementary virtually free finitely generated groups);
\item more generally, all unimodular groups quasi-isometric to a non-degenerate tree (because they have a cocompact lattice by \cite{BK}), including all automorphism groups of regular and biregular trees of finite valency;
\item non-ascending HNN extensions of compact groups over open subgroups: indeed the Bass-Serre tree is then a regular tree. Such groups are non-focal are often non-unimodular.
\end{enumerate}
\end{remark}

It is natural to ask whether this ``large" class is the whole class:

\begin{question}\label{sqs}
Are any two hyperbolic LC-groups quasi-isometric to the 3-regular tree commable? Equivalently, does quasi-special imply special?
\end{question}

It is easy to check that any such group is commable to the Bass-Serre fundamental group of a connected finite graph of groups in which all vertex and edge groups are isomorphic to $\widehat{\Z}$ (the profinite completion of $\mathbf{Z}$). I expect a positive answer to Question \ref{sqs}.

{\bf Update.} This question was settled positively in full generality by M.\ Carette \cite{Ca} after being asked in a previous version of this survey and in \cite{Co13}.

%, although at this time I do not feel bold enough to conjecture it.

A thorough study of cocompact isometry groups of bounded valency trees is carried out in \cite{MSW02}.

\begin{remark}\label{termi_special}
The adjective ``special" indicates that special hyperbolic LC-groups are quite exceptional among hyperbolic LC-groups, although they are the best-known. For instance, there are countably many quasi-isometry classes of such groups, namely one for trees and one for each homothety class of rank~1 symmetric space of noncompact type (and finitely many classes for each fixed asymptotic dimension), while there are continuum many pairwise non-quasi-isometric 3-dimensional Heintze groups.
\end{remark}

Let us mention the following lemma, which is well-known but often referred to without proof.

\begin{lem}\label{tqit}
Let $T$ be a bounded valency nonempty tree with no vertex of degree 1 and in which the set of vertices of valency $\ge 3$ is cobounded. Then $T$ is quasi-isometric to the 3-regular tree. In particular, if $T$ is a bounded valency tree with a cocompact isometry group and at least 3 boundary points then it is quasi-isometric to the 3-regular tree $T'$.
\end{lem}
\begin{proof}
A first step is to get rid of valency 2 vertices. Indeed, since there are no valency 1 vertex and by the coboundedness assumption, every valency 2 vertex $v$ lies in a unique (up to orientation) segment consisting of vertices $v_0,\dots,v_n$ with $v_0$, $v_n$ of valency $\ge 3$, and each $v_1,\dots,v_{n-1}$ being of valency 2, and $n$ being bounded independently of $v$. If we remove the vertices $v_1,\dots,v_{n-1}$ and join $v_0$ and $v_n$ with an edge, the resulting tree is clearly quasi-isometric to $T$. Hence in the sequel, we assume that $T$ has no vertex of valency $\le 2$.

%and $t=\lceil \log_2(s-1)\rceil+1$

Let $s\ge 3$ be the maximal valency of $T$. Denote by $T^0$ and $T^1$ the 0-skeleton and 1-skeleton of $T$. Let $T_3$ be a 3-regular tree, and fix an edge, called root edge, in both $T$ and $T_3$, so that the $n$-ball $T(n)$ means the $n$-ball around the root edge (for $n=0$ this is reduced to the edge). Let us define a map $f:T\to T_3$, by defining it by induction on the $n$-ball $T(n)$.

We prescribe $f$ to map the root edge to the root edge, and, for $n\ge 0$, we assume by induction that $f$ is defined on the $n$-ball of $T^0$. The convex hull of $f(T(n))$ is a finite subtree $A(n)$ of $T_3$. We assume the following property $\mathcal{P}(n)$: the subtree $A(n)\subset T_3^0$ has no vertex of valency 2, and the $n$-sphere of $T^0$ is mapped to the boundary of this subtree $A(n)$. Let $v$ belong to the $n$-sphere of $T$. Then $v$ has a single neighbor not in the $(n+1)$-sphere, and has a number $m\in [2,s-1]$ of other neighbors, in the $(n+1)$-sphere of $T$. The vertex $f(v)$ has a single neighbor $w_0$ in $A(n)$. (Choosing a root edge instead of a root vertex is only a trick to avoid a special step when $n=0$.)

% and has two more neighbors. 

We use the following claim: in the binary rooted tree of height $k\ge 1$ and root $o$, for every integer in $m\in [2,2^k]$ there exists a finite subset $F$ of vertices of cardinal $m$ with $o\notin F$ such that the convex hull of $F\cup\{o\}$ has exactly $F$ as set of vertices of valency 1 and admits only $o$ as set of vertices of valency 2. The proof is immediate: reducing the value of $k$ if necessary, we can suppose $m\ge 2^{k-1}$, and then, writing $m=2^{k-1}+t=2t+(2^{k-1}-t)$, choose $2^{k-1}-t$ vertices of height $k-1$, and choose the $2t$ descendants (of height $k$) of the remaining $t$ vertices of height $k-1$, to form the subset $F$, and it fulfills the claim.

Now choose $k=\lceil \log_2(s-1)\rceil$, consider the set \[M_v=\{w\in T_3^0:d(w,w_0)-1=d(w,f(v))\le k\}\] (in other words, those vertices at distance $\le k$ from $f(v)$ not in the direction of $w_0$); this is a binary tree, rooted at $f(v)$, of height $k$. Since $m\le s-1$, and $s-1\le 2^k$, we have $m\le 2^k$ and by the claim there is a finite subset $F_v\subset M\smallsetminus\{f(v)\}$ of cardinal $m$ with the required properties. So the convex hull of $f(T(n))\cup F_v$ admits, in $M$, only elements of $F_v$ as vertices of valency 1, and no vertices of valency 1.

Noting that the $M_v$ are pairwise disjoint ($v$ ranging over the $n$-sphere of $T$), we deduce that the convex hull of $f(T(n))\cup\bigcup_vF_v$ admits no vertex of valency 2 and admits $\bigcup F_v$ as set of vertices of valency 1. Now extend $f$ to the $(n+1)$-ball by choosing, for every $v$ in the $n$-sphere of $T$, a bijection between its set of neighboring vertices in the $(n+1)$-sphere and $F_v$ (recall that they have the same cardinal by construction). Then $\mathcal{P}(n+1)$ holds by construction.

By induction, we obtain a map $f:T^0\to T_3^0$; it is injective by construction, and more precisely $d(f(v),f(v'))\ge d(v,v')$ for all $v,v'$; moreover $f$ is $k$-Lipschitz. In addition, if $A(n)$ is the convex hull of the image of $T(n)$, then an immediate induction shows that $A(n)$ contains the $n$-ball $T_3(n)$, and every point is at distance $\le k$ to the image of $f$. Thus $f$ is a quasi-isometry $T^0\to T_3^0$. 
\end{proof}

\subsection{A few counterexamples}

%\subsubsection{Groups of connected type with no common cocompact subgroup}

\begin{center}{\bf Groups of connected type with no common cocompact subgroup}\end{center}
\vspace{-0.1cm}

We have seen that any two commable focal groups of connected type are commable through $\nearrow\nwarrow$. This is not true with $\nwarrow\nearrow$. The simplest example is obtained as follows: start from  the group $G=\R\rtimes\R$ (the affine group), $u=\log\circ\Delta_G$, $G_1=u^{-1}(\Z)$ and $G_2=u^{-1}(\lambda\Z)$ where $\lambda$ is irrational. Since $\Delta(G_1)\cap\Delta(G_2)=\{1\}$, it is clear that $G_1$ and $G_1$ are not $\nwarrow\nearrow$-equivalent.

%\subsubsection{Focal groups not acting on the same space}
\begin{center}{\bf Focal groups not acting on the same space}\end{center}
\vspace{-0.1cm}

We saw that commable focal hyperbolic LC-groups of connected type are commable through $\nearrow\nwarrow$. This is not true in other types. We give here some examples of totally disconnected type; examples of mixed type can be derived mechanically by ``adding" a connected part.

Recall from \S\ref{tdty} that all $\FT_n$, for $n\ge 2$, are commable, and that $\FT_m$ and $\FT_n$ are commable within focal groups if and only if $m$ and $n$ are integral powers of the same integer. In contrast, we have:

\begin{prop}[\cite{Co13}]\label{a4a8f}
If $2\le m<n$, there exist no hyperbolic LC-group $G$ with copci homomorphisms $\FT_m\to G\leftarrow \FT_n$. \qed
\end{prop}

\begin{cor}
If $m,n\ge 2$ are distinct, there is no proper metric space with continuous proper cocompact isometric actions of both $\FT_m$ and $\FT_n$.\qed
\end{cor}

%\subsubsection{Discrete groups not acting on the same space}
\begin{center}{\bf Discrete groups not acting on the same space}\end{center}
\vspace{-0.1cm}

This subsection is a little tour beyond the focal case, dealing with discrete analogues of the examples in Proposition \ref{a4a8f}.

Let us mention a consequence of Theorem \ref{cqia}, already observed in \cite[Corollary 10]{MSW} with a slightly different point of view. Let $C_n$ be a cyclic group of order $n$. Recall that all discrete groups $C_n\ast C_m$, for $n\ge 2$ and $m\ge 2$ are quasi-isometric to the trivalent regular tree.

\begin{prop}\label{pqpq}
Let $(p_1,q_1)$ and $(p_2,q_2)$ be pairs of primes $\ge 3$. If $\{p_1,q_1\}\neq\{p_2,q_2\}$, then the groups $C_{p_1}\ast C_{q_1}$ and $C_{p_1}\ast C_{q_2}$ are not isomorphic to cocompact lattices in the same locally compact group, and thus do not act properly cocompactly on the same nonempty proper metric space.
\end{prop}

\begin{lem}\label{tprime}
Let $T$ be a tree with a cobounded action of its isometry group. Then it admits a unique minimal cobounded subtree $T'$ (we agree that $\emptyset\subset T$ is cobounded if $T$ is bounded). Moreover, $T=T'$ if and only if $T$ has no vertex of degree 1.
\end{lem}
\begin{proof}
Let $T'$ be the union of all (bi-infinite) geodesics in $T$; a straightforward argument shows that $T'$ is a subtree. Observe that any geodesic is contained in every cobounded subtree: indeed, any point of a geodesic cuts the tree into two unbounded components. It follows that $T'$ is contained in every cobounded subtree; by definition, $T'$ is $\Isom(T)$-invariant.

Let us show that $T'$ is cobounded. If $T$ is bounded then $T'=\emptyset$ and is cobounded by convention. So let us assume that $T$ is unbounded; then it is enough to show that $T'\neq\emptyset$, because then the distance to $T'$ is invariant by the isometry group, so takes a finite number of values by coboundedness. To show that $T'\neq\emptyset$, it is enough to show that $\Isom(T)$ has a hyperbolic element: otherwise the action of $\Isom(T)$ would be horocyclic and thus would preserve the horocycles with respect to some point at infinity, which would prevent coboundedness of the action of $\Isom(T)$.

The last statement is clear from the definition of $T'$.
\end{proof}

If $p,q\ge 2$ are numbers and $m\ge 1$, define a tree $T_{p,q,m}$ as follows: start from the $(p,q)$-biregular tree and replace each edge by a segment made out of $m$ consecutive edges. Note that if $p,q\ge 3$, the unordered pair $\{p,q\}$ is uniquely determined by the isomorphy type of $T_{p,q,m}$.

\begin{lem}\label{cpq}
Let $p,q$ be primes and let $C_p\ast C_q$ act minimally properly on a nonempty tree $T$ with no inversion. Then $T$ is isomorphic to $T_{p,q,m}$ for some $m\ge 1$.
\end{lem}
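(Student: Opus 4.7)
The idea is to realize $T$ as a Bass--Serre tree, up to subdivision, using the restricted finite-subgroup structure of $G=C_p\ast C_q$. Since $G$ is finitely generated and the action is minimal, it is automatically cocompact (the convex hull of an orbit of a finite generating set spans a $G$-invariant subtree, which must equal $T$), so $\Gamma=G\backslash\!\backslash T$ is a finite graph of finite groups with $\pi_1(\Gamma)=G$. Every finite subgroup of $G$ is conjugate to $\{1\}$, $C_p$, or $C_q$; moreover $C_p$ and $C_q$ are self-normalizing (a standard free-product fact), and two distinct conjugates of prime-order factors always generate an infinite subgroup. Combining the no-inversion hypothesis with Serre's fixed-point theorem gives nonempty fixed subtrees $F_p:=\mathrm{Fix}(C_p)$ and $F_q:=\mathrm{Fix}(C_q)$. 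These are finite: if $v,gv\in F_p$ then $\langle C_p,\,g^{-1}C_p g\rangle$ fixes $v$, is finite by properness, hence equals $C_p$ by the finite-subgroup structure, forcing $g\in N_G(C_p)=C_p$ and $gv=v$. The same argument shows that two $G$-translates of these subtrees are either equal or disjoint.

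Collapse each member of the disjoint family $G\cdot F_p\cup G\cdot F_q$ to a single point to obtain a new $G$-tree $T'$. By construction every edge of $T'$ comes from an edge of $T$ lying outside all fixed subtrees, hence has trivial stabilizer. The action on $T'$ remains proper, cocompact, and minimal (a proper invariant subtree of $T'$ pulls back to a proper invariant subtree of $T$). Apply Bass--Serre theory to $T'$: its finite quotient $\Gamma'$ has all edge groups trivial, so $\pi_1(\Gamma')$ is the free product of its vertex groups with a free group of rank equal to the first Betti number of $\Gamma'$. Grushko's theorem forces this Betti number to vanish and the multiset of nontrivial vertex groups to be exactly $\{C_p,C_q\}$. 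Minimality then forbids any leaf of $\Gamma'$ with trivial group (it would lift to a degree-one vertex of $T'$), so $\Gamma'$ is a path from a $C_p$-vertex to a $C_q$-vertex through $m-1$ trivial-group vertices, and $T'\cong T_{p,q,m}$.

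It remains to show each $F_p$ (and each $F_q$) is a single vertex, making $T\to T'$ an isomorphism. For $v\in F_p$ let $D_v$ be the set of edges at $v$ not lying in $F_p$. The no-inversion hypothesis ensures that any $C_p$-stabilized edge at $v$ is pointwise fixed, hence lies in $F_p$; thus $C_p$ acts freely on $D_v$ and $|D_v|$ is a multiple of $p$. The image of $F_p$ in $T'\cong T_{p,q,m}$ is a $C_p$-stabilized vertex of $T_{p,q,m}$, which has degree exactly $p$; and this degree equals $\sum_{v\in F_p}|D_v|$. Since the sum is $p$ and each summand is a nonnegative multiple of $p$, a unique $v^*\in F_p$ satisfies $|D_{v^*}|=p$ while $|D_v|=0$ for the remaining $v\in F_p$. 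If $|F_p|\ge 2$, then as a finite tree $F_p$ has at least two leaves, and any leaf $v\ne v^*$ satisfies $\deg_T(v)=\deg_{F_p}(v)+|D_v|=1$, contradicting minimality of the $T$-action (which forces $T$ to have no leaves). Hence $|F_p|=1$, and symmetrically $|F_q|=1$, so $T\cong T_{p,q,m}$.

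The principal obstacle is the degree/divisibility count in the last paragraph, which only closes because $T'$ has already been identified as exactly $T_{p,q,m}$ via Grushko's theorem; a secondary subtlety is checking that the collapse $T\to T'$ preserves minimality, which rests on the disjointness of translates of $F_p$ and $F_q$.
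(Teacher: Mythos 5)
Your argument is correct, but it is organized quite differently from the paper's proof, and it is longer than necessary. The paper works directly with the quotient graph of groups of $T$: the quotient graph is a finite tree because $\Hom(C_p\ast C_q,\Z)=0$; at each degree-one vertex of the quotient, minimality forces the edge group to be a \emph{proper} subgroup of the vertex group, and since vertex stabilizers are trivial or of prime order this makes the leaf edge groups trivial; each leaf then splits off a nontrivial free factor, so by the structure of free decompositions of $C_p\ast C_q$ there are at most two leaves, the quotient is a segment, and the interior vertex (hence edge) groups are trivial, giving $T\cong T_{p,q,m}$ at once. You instead first collapse the fixed trees $\mathrm{Fix}$ of the conjugates of $C_p$ and $C_q$ (using self-normalization of the factors and the classification of finite subgroups to get disjointness and finiteness), identify the collapsed tree as $T_{p,q,m}$ by Bass--Serre plus Grushko, and then undo the collapse by the degree/divisibility count at a would-be leaf of $F_p$, again invoking minimality. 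Every step of this checks out -- in particular the facts that edges outside the fixed trees have trivial stabilizers, that the collapse preserves properness, cocompactness, minimality and absence of inversions, and that $\sum_{v\in F_p}|D_v|$ equals the degree $p$ of the collapsed vertex with each summand divisible by $p$ -- so the proof is valid, including when $p=q$. What the detour buys you is a reduction to graphs of groups with trivial edge groups, at the cost of the extra machinery (fixed subtrees, self-normalizing factors, the counting argument); the paper's observation that prime-order vertex groups force trivial edge groups at the leaves makes all of that unnecessary and yields the segment structure of the quotient of $T$ itself directly.
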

\begin{proof}
This gives $C_p\ast C_q$ as Bass-Serre fundamental group of a finite connected graph of groups $((\Gamma_v)_v,(\Gamma_e)_e)$ with finite vertex groups. This finite graph $X$ is a finite tree, because $\Hom(C_p\ast C_q,\Z)=0$. Let $v$ be a degree 1 vertex of this finite tree, and let $e$ be the oriented edge towards $v$. Then the embedding of $\Gamma_v$ into $\Gamma_e$ is not an isomorphism, because otherwise the action on the tree would not be minimal ($v$ corresponding to a degree one vertex in the Bass-Serre universal covering). 

Since vertex stabilizers are at most finite of prime order, this shows that such edges are labeled by the trivial group. This gives a free decomposition of the group in as many factors as degree 1 vertices in $X$. So $X$ has at most 2 degree 1 vertices and thus is a segment, and then shows that other vertices are labeled by the trivial group. If the number of vertices is $m+1$, this shows that $T$ is isomorphic to $T_{p,q,m}$.
\end{proof}

\begin{proof}[Proof of Proposition \ref{pqpq}]
Note these groups have no nontrivial compact normal subgroup, so the second statement is a consequence of the first by considering the isometry group of the space.

Suppose they are cocompact lattices in a single locally compact group $G$. Then $G$ is compactly generated and quasi-isometric to a tree, so by Theorem \ref{cqia} acts properly cocompactly, with no inversion and minimally on a finite valency tree $T$. By Lemma \ref{tprime}, the action of $C_{p_i}\ast C_{q_i}$ is minimal for $i=1,2$. By Lemma \ref{cpq}, the tree is isomorphic to $T_{p_i,q_i,m_i}$ for $i=1,2$, which implies $\{p_1,q_1\}=\{p_2,q_2\}$, a contradiction.
\end{proof}

\section{Towards a quasi-isometric classification of amenable hyperbolic groups}
\label{sec:conclu}

\subsection{The main conjecture}

We use the notion of commability studied in Section \ref{sec:com}. Here is the main conjecture about quasi-isometric rigidity of focal hyperbolic LC-groups.

\begin{conj}\label{mainc}
Let $G$ be a focal hyperbolic LC-group. Then any compactly generated locally compact group $H$ is quasi-isometric to $G$ if and only if it is commable to $G$.
\end{conj}

An LC-group $H$ as in the conjecture is necessary non-elementary hyperbolic. Thus the conjecture splits into two distinct issues:

\begin{itemize}
\item (internal case) when $H$ is focal, in which case the conjecture can be restated as: two focal hyperbolic LC-groups are quasi-isometric if and only if they are commable; this is discussed in \S\ref{internal};
\item (external case) when $H$ is of general type; this is discussed in \S\ref{qia}.
\end{itemize}

The commability classes of focal hyperbolic LC-groups having been fully described in \S\ref{sec:com} (except in the totally disconnected type), Conjecture \ref{mainin} provides a comprehensive description. Note that unlike in the two other types, in the totally disconnected type, the quasi-isometric classification is known (and trivial, since there is a single class) but the commability classification is still an open question (Question \ref{sqs}).

%We here address the quasi-isometry classification of focal hyperbolic LC-groups. This can be split into two issues: the internal classification (describe the quasi-isometry equivalence relation between focal hyperbolic LC-groups), and the external classification (describe which CGLC-groups are quasi-isometric to a focal hyperbolic LC-group). Although these problems are independent in principle, both can be solved by a good understanding of self-quasi-isometries of focal hyperbolic LC-groups. However, some of the tools are specific to one approach and we separately tackle the two issues.

\subsection{The internal classification}\label{internal}

%\subsubsection{The main internal QI-classification conjecture}
\begin{center}{\bf The main internal QI-classification conjecture}\end{center}\vspace{-0.1cm}

Let us repeat the internal part of Conjecture \ref{mainc}:

\begin{conj}\label{mainin}
Let $G$ be a focal hyperbolic LC-group. Then any focal hyperbolic LC-group $H$ is quasi-isometric to $G$ if and only if it is commable to $G$.
\end{conj}

In other words, any two focal hyperbolic LC-groups are quasi-isometric if and only if they are commable.
The conjecture is stated in a less symmetric formulation so that it makes sense to state that the conjecture holds for a given $G$. Note that any two focal hyperbolic LC-groups of totally disconnected type are commable by Proposition \ref{tdck}, so there is no need to discard them as in Conjecture \ref{mainc}.

Note that the ``if" part is trivial. Thus the conjecture is a putative description of the internal quasi-isometry classification of focal hyperbolic groups (and thus of the spaces associated to these groups), using the description of commability classes, which is described in a somewhat satisfactory way in Section \ref{sec:com}.

%Two focal hyperbolic LC-groups are quasi-isometric if and only if they are commable.

%In other words, the conjecture asserts than two focal hyperbolic LC-groups should be quasi-isometric if and only if they are commable.

Conjecture \ref{mainin} can in turn be split into the connected and mixed cases.

%We next discuss the internal part of conjecture in the more classical connected type, and then in the more original mixed type.

%, and totally disconnected types. The latter case is trivial: all focal hyperbolic groups of totally disconnected type are both commable (see Proposition \ref{tdck}) and quasi-isometric (e.g., to the trivalent regular tree)

%\subsubsection{Internal QI-classification in connected type}

\begin{center}{\bf Internal QI-classification in connected type}\end{center}\vspace{-0.1cm}

The following was originally stated as a theorem by Hamenst\"adt in her PhD thesis \cite{Ham}, who told me to rather consider it as a conjecture:

%originally conjectured by Hamenst\"adt \cite{Ham}.

\begin{conj}\label{int_con}
Let $H$ be a purely real Heintze group. If a purely real Heintze group $L$ is quasi-isometric to $H$, then it is isomorphic to $H$. 
\end{conj}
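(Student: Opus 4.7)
My plan is to attack Conjecture \ref{int_con} through boundary quasi-symmetric analysis. Write $H=N\rtimes_{\alpha}\R$ and $L=N'\rtimes_{\alpha'}\R$ with contracting purely real derivations; by Proposition \ref{foco} both are focal hyperbolic of connected type, and their boundary spheres each contain a unique distinguished point, $\omega_H$ and $\omega_L$. A quasi-isometry $f\colon H\to L$ extends to a quasi-symmetric homeomorphism $\bar f\colon\partial H\to\partial L$. The first thing to verify is that $\bar f$ sends $\omega_H$ to $\omega_L$: when $H$ (hence $L$) is quasi-isometric to a rank one symmetric space this is automatic from the classical Pansu--Tukia--Chow rigidity, while in the non-symmetric case one argues that $\omega_H$ is quasi-symmetrically distinguished as a ``conformal singularity'' of the visual metric, since away from $\omega_H$ the sphere is locally quasi-symmetric to a nilpotent (Carnot or more generally parabolic) group while any neighbourhood of $\omega_H$ is not. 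Restricting to the complements then gives a quasi-symmetric homeomorphism $h\colon N\to N'$ between the two parabolic visual metrics.

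The goal of the next stage is to promote $h$ to a Lie group isomorphism $\varphi\colon N\to N'$ satisfying $\varphi\circ e^{t\alpha}=e^{ct\alpha'}\circ\varphi$ for some $c>0$; such a $\varphi$ yields $H\simeq L$, since in a purely real Heintze group the scalar $c$ can be absorbed into the $\R$-factor by time-rescaling. First I would extract algebraic invariants: the topological dimension of $N$ gives $\dim N=\dim N'$, and the critical $L^p$-cohomology exponents of Proposition \ref{cot}, refined layer by layer of the descending central series along the lines of \cite{CoTe} (and Pansu's original estimates), pin down the full real spectrum of $\alpha$ up to a common scalar, with multiplicities in each graded piece $\mathfrak n^i/\mathfrak n^{i+1}$. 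In the Carnot case (Definition \ref{d_carnot}) one then invokes Pansu differentiation: $h$ is almost-everywhere Pansu differentiable, its derivative is a graded Lie group isomorphism $\mathrm{grad}(\mathfrak n)\to\mathrm{grad}(\mathfrak n')$ intertwining the Carnot dilations, and a quasi-conformal rigidity argument in the Pansu--Tukia tradition shows this derivative is essentially constant, producing $\varphi$. By Remark \ref{grada} this identifies $H\simeq\Carn(N)\simeq\Carn(N')\simeq L$.

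The hard part will be the non-Carnot case. Outside of Carnot type, the visual parabolic metric on $N$ is not a Carnot--Carath\'eodory metric but a more delicate parabolic metric reflecting the whole real spectrum of $\alpha$ and its various eigenspaces; Pansu's differentiation theorem does not apply directly, and one needs substitute differentiation theorems adapted to diagonal derivations with several distinct eigenvalue ratios, and beyond those to derivations with nontrivial Jordan blocks. This is precisely where the recent work of X.~Xie has pushed the frontier in a case-by-case fashion, but no unified argument is known, and I would expect any general proof to require new quasi-conformal analysis on these non-Carnot parabolic spheres. As an intermediate target one might first try to establish that every purely real Heintze group quasi-isometric to a Heintze group of Carnot type is itself of Carnot type, already a non-trivial statement whose resolution would clarify how the eigenvalue data of $\alpha$ is encoded in the coarse geometry.
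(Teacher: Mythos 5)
The statement you are addressing is Conjecture \ref{int_con}: it is an open conjecture (due to Hamenst\"adt), and the paper does not prove it --- it only records partial evidence, namely Pansu's theorem for Heintze groups of Carnot type, Xie's work for abelian $N$, and the symmetric-space case via Theorem \ref{cqi} combined with Theorem \ref{gw}. Your text is accordingly not a proof but a strategy outline, and you say so yourself; as a proof attempt it has genuine gaps at every stage beyond what is already in the literature. First, the step ``verify that $\bar f$ sends $\omega_H$ to $\omega_L$'' is not a verification: the assertion that the distinguished point is a quasi-symmetric singularity of the visual sphere is precisely the Pointed Sphere Conjecture \ref{nnprr}, known only in the cases covered by Theorem \ref{pansu69} (diagonalizable, non-Carnot) and Theorem \ref{cxx} (abelian $N$), and false in the minimal-parabolic case; you cannot invoke it as a routine first step, and in the Carnot non-symmetric case it is entirely open. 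Second, your claim that the $L^p$-cohomology exponents ``refined layer by layer'' pin down the full real spectrum of $\alpha$ up to a common scalar, with multiplicities, is unsubstantiated: Proposition \ref{cot} yields a single critical exponent $p_0=\log(\delta_N)/\log(\lambda)$ (trace over smallest eigenvalue, when $k=1$), and no QI-invariance of the finer spectral data is established in the paper or elsewhere.

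Third, even granting a point-preserving quasi-symmetric map $N\to N'$, the promotion of it to a group isomorphism intertwining the one-parameter groups is the heart of the problem. In the Carnot case this is Pansu's Theorem (\cite{pansu89m}), but your route would additionally need the statement that a purely real Heintze group quasi-isometric to one of Carnot type is itself of Carnot type --- which you correctly flag as an unresolved ``intermediate target.'' In the non-Carnot, non-diagonalizable cases there is no differentiation theory available, as you acknowledge. So the proposal is a reasonable survey of the known attack (boundary quasi-symmetric analysis, Pansu differentiation, Xie's case-by-case extensions) but it does not constitute a proof of the conjecture, and indeed no proof is known; the paper itself treats the statement only as a conjecture guiding the discussion in \S\ref{qimi}.
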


In other words, the conjecture states that two purely real Heintze groups are quasi-isometric if and only if they are isomorphic. The non-symmetric formulation of the conjecture is convenient because it makes sense to assert that it holds for a given~$H$. Note that the same statement holds true for commability, by Corollary \ref{c_comix}.

Note that by Corollary \ref{ckco} and Proposition \ref{rhei}, an equivalent formulation of the conjecture consists in replacing both times ``purely real Heintze group" by ``faithful focal-universal hyperbolic LC-group of connected type". We also have

\begin{prop}\label{eqi}
Conjecture \ref{mainin} specified to groups of connected type is equivalent to Conjecture \ref{int_con}.
\end{prop}
\begin{proof}
Assume that Conjecture \ref{mainin} holds for groups of connected type. If $H_1$ is purely real Heintze and is quasi-isometric to $H$, by the validity of Conjecture \ref{mainin}, $H$ and $H_1$ are commable; hence by Proposition \ref{CCKFCCK} are commable within focal groups. By Theorem \ref{efo}, there is a faithful focal-universal LC-group $G$ and copci homomorphisms $H\to G\leftarrow H_1$. Viewing these homomorphisms as inclusions, by Proposition \ref{rhei}, we obtain that $H=H_1$. 

Conversely, suppose that Conjecture \ref{int_con} holds. Let $G_1,G_2$ be quasi-isometric focal hyperbolic groups of connected type. By Proposition \ref{rhei}, they are commable to purely real Heintze groups $H_1$ and $H_2$, which are isomorphic by Conjecture \ref{int_con}. Hence $G_1$ and $G_2$ are commable. 
\end{proof}

The results of Section \ref{sec:com}, or alternatively the more general Gordon-Wilson approach (see Remark \ref{gw}), shows the following evidence for Conjecture \ref{int_con}.

\begin{prop}\label{prh}
Two purely real Heintze groups admit continuous simply transitive isometric actions on the same Riemannian manifold if and only if they are isomorphic.
\end{prop}
\begin{proof}
If the groups are 1-dimensional, the only possibility is $\R$. Assume they have dimension $\ge 2$; then they are focal hyperbolic of connected type and commable, hence isomorphic by the results of \cite{Co13} (see Corollary \ref{c_comix}).
\end{proof}

\begin{remark}\label{gw}
The results of Gordon and Wilson \cite{GW} show that Proposition \ref{prh} holds in a much greater generality, namely for purely real simply connected solvable Lie groups (sometimes called real triangulable groups). Indeed, let $H_1,H_2$ be such groups and $X$ the Riemannian manifold. Then $H_1$ and $H_2$ stand as closed subgroups in the isometry group of the homogeneous Riemannian manifold $\Isom(X)$. Gordon and Wilson define a notion of ``subgroup in standard position" in $\Isom(X)$. In \cite[Theorem 4.3]{GW}, they show that any real triangulable subgroup is in standard position, and thus $H_1$ and $H_2$ are in standard position. In \cite[Theorem 1.11]{GW}, they show that in $\Isom(X)$, all subgroups in standard position are conjugate (and are actually equal, in case $\Isom(X)$ is amenable, see \cite[Corollary 1.12]{GW}). Thus $H_1$ and $H_2$ are isomorphic.
\end{remark}

Here are some partial results towards Conjecture \ref{int_con}.

%Indeed, if a purely real Heintze group $M$ is quasi-isometric to $H$, then by Theorem \ref{cqi}, $L$ is then isometric, for a suitable invariant Riemannian metric, to the symmetric space, and we can use Theorem \ref{gw}.

\begin{itemize}
\item Conjecture \ref{int_con} holds for $H$ when $H$ is abelian, by results of Xie~\cite{xx}.
\item Conjecture \ref{int_con} holds for $H$ when the purely real Heintze group $H$ is cocompact in the group of isometries of a rank 1 symmetric space of noncompact type. This follows from Theorem \ref{cqi} (combined, for instance, with Proposition \ref{eqi}).
\item  If purely real Heintze groups $H_1,H_2$ of Carnot type (see \S\ref{scarnot}) are quasi-isometric,
then Pansu's Theorem \cite[Theorem~2]{pansu89m} implies that $H_1$ and $H_2$ are isomorphic.
\item Pansu's estimates of $L^p$-cohomology in degree~1 \cite{Pan07} provide useful quasi-isometry invariants.
\item Carrasco \cite[Cor.~1.9]{Carr} proves the following: given a Heintze group $H=N\rtimes\R$, let 
$\mathfrak{n}^{\min}$ be the characteristic subspace relative to the smallest eigenvalue of some dilating element of $\R$, and $H^{\min}=\exp(\mk{n}^{\min})\rtimes\R$. Then he proves that if purely real Heintze groups $H_1,H_2$ are quasi-isometric, then $H_1^{\min}$ and $H_2^{\min}$ are isomorphic. He also proves that being of Carnot type is a quasi-isometry invariant.
% (indeed, $H$ is of Carnot type if and only if $H^{\min}$ has the form $\R^n\rtimes\R$ with $\R$ acting by homotheties)
%\item Carrasco recently proved that Conjecture \ref{int_con} holds for purely real Heintze groups {\em that are not of Carnot type}. 
%, including the connected type case of Theorem \ref{lpcote}.
\end{itemize}
	% NE PAS EFFACER
	%(P.~Pansu told me that the argument can be refined, using results of \cite	{pansu89d}, by obtaining the same conclusion assuming only that $H_1$ is of 	Carnot type. This would imply that all purely real Heintze groups of Carnot 	type satisfy Conjecture \ref{int_con}.)
	% NE PAS EFFACER

%Let us now specify to the case of negatively curved manifolds ($k=1$). A general picture for the QI-classification, , is given by:

%\subsubsection{Internal QI-classification in the mixed type}
\begin{center}{\bf Internal QI-classification in the mixed type}\end{center}\vspace{-0.1cm}

In mixed type, we can specify Conjecture \ref{mainin} as follows (see Definitions \ref{dvarpi} and \ref{dhpa})

\begin{conj}\label{int_mi}
Let $H$ be a nonabelian purely real Heintze group, $\varpi>0$ a positive real number, and $q$ a non-power integer, and define $G=H[\varpi,q]$. If $(H',\varpi',q')$ is another such triple and $G$ and $G'=H'[\varpi',q']$ are quasi-isometric then they are isomorphic.
\end{conj}

\begin{prop}
Conjecture \ref{mainin} specified to groups of mixed type is equivalent to Conjecture \ref{int_mi}.
\end{prop}
\begin{proof}
Suppose that Conjecture \ref{mainin} specified to groups of mixed type holds. If  $G$ and $G'$ are given as in Conjecture \ref{int_mi}, then the validity of Conjecture \ref{mainin} implies that $G$ and $G'$ are commable. By Corollary \ref{c_comix}, we deduce that $G$ and $G'$ are isomorphic.

Conversely assume that Conjecture \ref{int_mi} holds. Let $G$ be as in Conjecture \ref{mainin}, of mixed type, and let $G'$ be a focal hyperbolic LC-group, quasi-isometric to $G$. Then $G'$ is necessarily of mixed type (by Corollary \ref{foc3}). By Corollary \ref{c_comix}, $G$ and $G'$ are respectively commable to groups of the form $H[\varpi,q]$ and $H'[\varpi',q']$, which by the validity of Conjecture \ref{int_mi} are isomorphic, so that $G$ and $G'$ are commable.
\end{proof}

The following theorem indicates that a significant part of Conjecture \ref{int_mi} holds, and provides a full reduction to the connected case Conjecture \ref{int_con}.

\begin{thm}\label{th_mi}
Let $G=H[\varpi,q]$ and $G'=H[\varpi,q']$, as in Conjecture \ref{int_mi}, be quasi-isometric. Then the following statements hold:
\begin{enumerate}[(a)]
\item\label{com1}\cite{Co13} $H$ and $H'$ are quasi-isometric;
\item\label{com2}\cite{Co13} $\varpi=\varpi'$;
\item\label{com3} {\bf (T.~Dymarz \cite{Dy12})} $q=q'$.
\end{enumerate}
In particular, if $H$ satisfies Conjecture \ref{int_con} then $H[\varpi,q]$ satisfies Conjecture \ref{int_mi}.\qed
\end{thm}

Theorem \ref{th_mi} shows that Conjecture \ref{int_mi} boils down to Conjecture \ref{int_con}. However, the proofs of (\ref{com1}) and especially of (\ref{com3}) in \cite{Dy12} suggest that Conjecture \ref{int_mi} might be easier than Conjecture \ref{int_con}, because its boundary exhibits more rigidity.

Let us indicate an application from \cite{Co13}. Let $X$ be a homogeneous negatively curved manifold of dimension $\ge 2$. For $t>0$, let $X_{\{t\}}$ be obtained from $X$ by multiplying the Riemannian metric by $t^{-1}$ (thus multiplying the distance by $t^{-1}$ and the sectional curvature by $t$). For instance, $\mathbf{H}^2_{\{t\}}$ is the rescaled hyperbolic plane with constant curvature $-t$.

\begin{thm}[\cite{Co13}]
If $k_1,k_2$ are integers $\ge 2$ and $t_1,t_2$ are positive real numbers, then $X_{\{t_1\}}[k_1]$ and $X_{\{t_2\}}[k_2]$ are quasi-isometric if and only if $\log(k_1)/t_1=\log(k_2)/t_2$ and $k_1,k_2$ have a common integral power.\qed

In particular, when either $k\ge 2$ or $t>0$ is fixed, the $X_{\{t\}}[k]$ are pairwise non-quasi-isometric.
\end{thm}

The proof indeed consists in proving that if $G$ is a focal hyperbolic LC-group with a continuous proper cocompact isometric action on $X_{\{t\}}[k]$, then $\varpi(G)=c\log(k)/t$, where the constant $c>0$ only depends on $X$. In particular, the last statement of the theorem follows from the quasi-isometric invariance of $\varpi$, while the first statement follows from it as well as Dymarz' invariance of the invariant $q$, and for the (easier) converse, relies on Proposition \ref{micom}.

\subsection{Quasi-isometric amenable and non-amenable hyperbolic LC-groups}\label{qia}
The external classification can be asked in two (essentially) equivalent but intuitively different ways:
\begin{itemize}
\item Which amenable hyperbolic LC-groups are QI to hyperbolic LC-groups of general type?
\item Which hyperbolic LC-groups of general type are QI to amenable hyperbolic LC-groups?
\end{itemize}

%We have to essentially equivalent questions: which hyperbolic LC-groups of general type are quasi-isometric to an amenable LC group, or which focal hyperbolic LC-groups are quasi-isometric to a hyperbolic LC-group of general type.

These questions are equivalent but they can be specified in different ways.
A potential full answer is given by the following conjecture:

% answers this question.

%Let $G$ be a hyperbolic LC-group. Then $G$ is quasi-isometric to both an amenable and a non-amenable CGLC-group if and only if $G$ is quasi-isometric to a metric space $X$, which is either a rank 1 symmetric space of noncompact type, or a 3-regular tree.

Recall from Definition \ref{dspe} that a hyperbolic LC-group is special if and only if it is both commable to amenable and non-amenable LC-groups and quasi-special if it is quasi-isometric to a special hyperbolic group. Such groups have a very peculiar form, see Proposition \ref{carspe} for characterizations. 

%We use the notion of special hyperbolic LC-groups (Definition \ref{dspe}). Recall that by

\begin{conj}\label{anaqi}
Let $G$ be a hyperbolic LC-group. Then $G$ is quasi-isometric to both an amenable and a non-amenable CGLC-group if and only if $G$ is quasi-special hyperbolic.
\end{conj}

%By Theorems \ref{cqi} and \ref{cqia}, the latter condition implies in turn that $G$ admits a continuous proper cocompact isometric action on either the same symmetric space, or on some infinite tree of with at least one vertex of valency at least~3 (which can be taken to be regular if $G$ is focal).

%of the conjecture is established: if $G$ admits such an action, it is both quasi-isometric to $\Isom(X)$, which is non-amenable and to the stabilizer $\Isom(X)_\omega$ of a boundary point, which is amenable.

\begin{remark}The ``if" part of the conjecture is clear. Conjecture \ref{anaqi} is a coarse counterpart to \cite[Theorem D]{CCMT}, which is transcribed here as Case (\ref{c4}) of Theorem \ref{ccmti} or as the equivalence (\ref{sp4})$\Leftrightarrow$(\ref{sp2}) of Proposition \ref{carspe}.
\end{remark}

\begin{prop}
Conjecture \ref{mainc} specified to $H$ of general type is equivalent to Conjecture \ref{anaqi}.
\end{prop}
\begin{proof}
Suppose that Conjecture \ref{mainc} holds for $H$ of general type. Let $G$ be as in Conjecture \ref{anaqi}; if $G$ is quasi-isometric to the trivalent tree then it is quasi-special; otherwise $G$ is then quasi-isometric, and hence commable by the (partial) validity of Conjecture \ref{mainc}, to both focal (not of totally disconnected type) and non-focal hyperbolic LC-groups. We conclude by Proposition \ref{carspe} that $G$ is special.

%Consider quasi-isometric hyperbolic LC-groups $G,H$ with $G$ focal and $H$ of general type. By the (partial) validity of Conjecture \ref{mainc}, $G$ and $H$ are commable and \ref{CCKFCCK}. 

Conversely assume Conjecture \ref{anaqi} holds. Suppose that $G,H$ are quasi-isometric hyperbolic LC-groups with $G$ focal not of totally disconnected type and $H$ of general type. By the validity of Conjecture \ref{anaqi}, each of $G$ and $H$ is quasi-special, and hence special by Proposition \ref{carspe}. Thus again by Proposition \ref{carspe}(\ref{sp2}), $G$ and $H$ commable to the isometry group of a rank~1 symmetric space of noncompact type; since non-homothetic rank~1 symmetric spaces of noncompact type are not quasi-isometric, we get the same group for $G$ and $H$ and thus they are commable.
\end{proof}

%; since the latter isometry groups are pairwise non-quasi-isometric, we obtain the same group for $G$ and $H$ and thus $G$ and $H$ are commable. 

We now wish to specify Conjecture \ref{anaqi}.

% We begin by an easy observation. %\begin{remark}\label{milnqir}
%A restatement of Proposition \ref{milnqi} is that the class of focal hyperbolic groups of mixed type (see \S\ref{fhmi}) is closed under quasi-isometries among CGLC-groups.\end{remark}Let us now state a more specific conjecture, which, taking into account some known results, is equivalent to Conjecture \ref{anaqi}. isomorphic to a minimal parabolic subgroup in a simple Lie group of rank one , i.e., a simply connected Lie group with $\R$ contracting the non-trivial simply connected nilpotent group $N$ with only real eigenvalues isomorphic to minimal parabolic subgroups in rank one groups and is not isomorphic to minimal parabolic in a simple Lie group of rank 1

\begin{conj}\label{nnpr}
Let $H=N\rtimes\R$ be a purely real Heintze group of dimension $\ge 2$.
Then either $H$ is special, or $H$ is not quasi-isometric to any vertex-transitive finite valency graph.
\end{conj}

\begin{remark}
The special role played by those purely real Heintze that are ``accidentally" special make the 
conjecture delicate.
\end{remark}

\begin{lem}\label{qico}
Assume that $H$ is a non-special purely real Heintze group of dimension $\ge 2$ . Then any CGLC group $G$ quasi-isometric to $H$ is either focal hyperbolic of connected type (as defined in \S\ref{fhco}), or is compact-by-(totally disconnected). If moreover $H$ satisfies Conjecture \ref{nnpr}, then $G$ is focal hyperbolic of connected type. 
\end{lem}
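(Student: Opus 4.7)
\textbf{Plan for the proof of Lemma \ref{qico}.} Since $H$ is a homogeneous negatively curved Riemannian manifold of dimension $d\ge 2$, it is Gromov hyperbolic; hence any CGLC group $G$ quasi-isometric to $H$ is itself CGLC hyperbolic, with boundary homeomorphic to $\partial H$. By Proposition \ref{foco}, $\partial H$ is a sphere $S^{d-1}$ of positive dimension, so $\partial G$ is a connected topological sphere. Consulting the type classification of \S\ref{tyh}, the bounded and 2-ended cases are immediately ruled out, leaving the focal and general type cases. In the focal case, Proposition \ref{foco} gives that $G$ is of connected type (having a sphere boundary), yielding case (i) of the lemma.

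To handle the general type case, the plan is to show that $G_0$ must be compact. Assume by contradiction that $G_0$ is noncompact. Combining the standard fact that every closed amenable normal subgroup of a non-elementary CGLC hyperbolic group is compact (applied to the amenable radical of $G_0$, which is characteristic in $G_0$ and hence normal in $G$) with Montgomery--Zippin and the Cornulier--Tessera classification of hyperbolic connected Lie groups \cite{CoTe}, one reduces $G$, modulo a compact normal subgroup, to a closed cocompact subgroup of $\Isom(Y)$, where $Y$ is a rank 1 symmetric space of noncompact type. Hence $H$ is quasi-isometric to $Y$, and Theorem \ref{cqi} then provides a continuous proper cocompact isometric action of $H$ on $Y$. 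Since the minimal parabolic subgroup $P=AN\subset\Isom(Y)$ is itself a purely real Heintze group acting simply transitively on $Y$, Gordon--Wilson's Theorem \ref{gw} forces $H\cong P$, contradicting the hypothesis. This gives case (ii).

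For the second assertion, assume $H$ satisfies Conjecture \ref{nnpr}; it then suffices to rule out case (ii). If $G$ were compact-by-(totally disconnected), Proposition \ref{tdg} would produce a continuous, proper and cocompact isometric action of $G$ on a vertex-transitive connected finite valency graph $\Gamma$, whence $H$ would be quasi-isometric to $\Gamma$. By Conjecture \ref{nnpr} this forces $H$ to be isomorphic to a minimal parabolic in a rank 1 simple Lie group, again contradicting the hypothesis; thus $G$ must be focal hyperbolic of connected type. The hard part will be the structural reduction in the general type case: passing from ``$G_0$ noncompact'' to a cocompact isometric action on a rank 1 symmetric space requires carefully assembling the compactness of amenable normal subgroups in hyperbolic groups, Montgomery--Zippin's structure theory, and the \cite{CoTe} classification, while ensuring that any totally disconnected component of $G$ is absorbed into a compact normal subgroup.
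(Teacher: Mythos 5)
Your plan follows essentially the same route as the paper's proof: the focal case is settled by the sphere boundary and Proposition \ref{foco}, the general-type case by showing $H$ cannot be quasi-isometric to a rank 1 symmetric space (Theorem \ref{cqi} combined with Gordon--Wilson, Theorem \ref{gw}), and the final assertion by Proposition \ref{tdg} together with Conjecture \ref{nnpr}. The structural dichotomy you sketch (a general-type hyperbolic LC group with noncompact identity component is, modulo a compact normal subgroup, cocompact in the isometry group of a rank 1 symmetric space) is precisely the step the paper compresses into ``therefore $G$ is compact-by-(totally disconnected)'', so your added detail fills in rather than departs from its argument.
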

\begin{proof}
If $G$ is focal, its boundary is a sphere, it is of connected type.

Otherwise $G$ is of general type.
By Theorem \ref{cqi}, $H$ is not quasi-isometric to a rank 1 symmetric space of noncompact type and therefore $G$ is compact-by-(totally disconnected). In particular, $H$ is quasi-isometric to a vertex-transitive connected finite valency graph, and this is a contradiction in case Conjecture \ref{nnpr} holds.
\end{proof}

Using a number of results reviewed above, we can relate the two conjectures.

%quasi-isometric the same rank~1 symmetric space or 3-regular tree. It follows that $G$ and $H$ are commable, by Theorem \ref{cqi} in the first case, and Theorem \ref{cqia} together with the first statement of Proposition \ref{tdck} in the second case.

\begin{prop}\label{concon}
Conjectures \ref{anaqi} and \ref{nnpr} are equivalent.
\end{prop}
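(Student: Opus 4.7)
The plan is to prove both implications separately, using the structural classifications (Theorems \ref{ccmti}, \ref{ccmt_mil}) together with the QI-rigidity results and boundary obstructions collected in the previous sections.

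For \ref{nnpr}$\Rightarrow$\ref{anaqi}, let $G$ be a hyperbolic LC-group quasi-isometric to an amenable CGLC-group $A$ and a non-amenable CGLC-group $B$. Since metric amenability is QI-invariant (Lemma \ref{amqi}) and $B$ is not metrically amenable, $A$ is not either, so $A$ is 2-ended or focal hyperbolic; the 2-ended case is ruled out by Corollary \ref{2ended} (it would force $B$ 2-ended, hence amenable). Thus $A$ is focal, and by Theorem \ref{ccmt_mil} acts properly cocompactly on some millefeuille $X[k]$; Proposition \ref{milnqi} rules out $\min(\dim X,k)\ge 2$ because $B$ is a general-type hyperbolic LC-group. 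In the tree case $\dim X=1$, $k\ge 2$, the space $X[k]$ is a regular tree of valency $\ge 3$, so $G$ is QI to such a tree and Theorem \ref{cqia} (discarding the $\R$-outcome since $G$ is not two-ended) yields the desired action. In the manifold case $k=1$, $\dim X\ge 2$, the group $A$ is focal of connected type (Proposition \ref{foco}), and Lemma \ref{heip} provides a purely real Heintze group $H$ of dimension $\ge 2$ QI to $A$, hence to $B$. If $H$ admits a proper cocompact isometric action on a rank 1 symmetric space of noncompact type, then $G$ is QI to that space and Theorem \ref{cqi} supplies the action of $G$; otherwise, Lemma \ref{qico} applied to $B$ forces $B$ to be compact-by-(totally disconnected), since the other alternative (focal of connected type) would make $B$ amenable. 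Proposition \ref{tdg} then produces a vertex-transitive finite-valency graph QI to $B$ and hence to $H$, contradicting Conjecture \ref{nnpr}.

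For \ref{anaqi}$\Rightarrow$\ref{nnpr}, let $H$ be a purely real Heintze group of dimension $\ge 2$ that does not act properly cocompactly on any rank 1 symmetric space of noncompact type, and suppose for contradiction that $H$ is QI to a vertex-transitive finite-valency graph $Y$. The group $\Gamma=\Aut(Y)$ is a totally disconnected CGLC-group QI to $H$. If $\Gamma$ is non-amenable, then Conjecture \ref{anaqi} applied to $G=A=H$ and $B=\Gamma$ produces a proper cocompact isometric action of $H$ on either a rank 1 symmetric space or a regular tree of valency $\ge 3$; the tree alternative is incompatible with $H$ being connected, since the automorphism group of any such tree is totally disconnected, so the continuous action of $H$ would be trivial, ruining cocompactness. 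Thus $H$ acts on a rank 1 symmetric space, contradicting the hypothesis (equivalent via Theorem \ref{gw} to saying $H$ is not conjugate to the simply transitive Heintze subgroup). If instead $\Gamma$ is amenable, then $\Gamma$ is hyperbolic and non-unimodular (Lemma \ref{amqi}), hence focal; being totally disconnected it is of TD type, and Proposition \ref{fotd} gives a proper cocompact action of $\Gamma$ on a regular tree of valency $\ge 3$. Then $H$ is QI to such a tree, which is impossible since $\partial H$ is a positive-dimensional sphere (Proposition \ref{bdmi}) while the tree boundary is a Cantor space.

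The main obstacle is the manifold case in the first direction: this is where Conjecture \ref{nnpr} is used decisively, through the dichotomy of Lemma \ref{qico} between a purely real Heintze group of Iwasawa type and the (forbidden) QI-equivalence with a vertex-transitive finite-valency graph. All other steps reduce to applications of the QI-rigidity and structural machinery assembled earlier, together with the topological obstructions from the Gromov boundary.
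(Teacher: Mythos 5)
Your proof is correct and follows essentially the same route as the paper: reduction of the focal side to a millefeuille space via Theorem \ref{ccmt_mil}, elimination of the pure case by Proposition \ref{milnqi}, the tree case via Theorem \ref{cqia}, and the manifold case via Lemma \ref{heip}, Lemma \ref{qico} (whose strong form you simply unpack through Proposition \ref{tdg}), Theorem \ref{cqi}, and in the converse direction the amenable/non-amenable dichotomy for $\Aut(Y)$, where your boundary comparison replaces the paper's appeal to Corollary \ref{foc3} and you make the Gordon--Wilson identification (Theorem \ref{gw}) explicit. These are only cosmetic variations, and your treatment of the tree outcome matches the paper's level of precision.
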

\begin{proof}
Assume Conjecture \ref{anaqi} holds. Consider $H$ as in Conjecture \ref{nnpr}, quasi-isometric to a vertex-transitive finite valency graph $X$. Then $H$ is quasi-isometric to $\Isom(X)$; if the latter is focal, being totally disconnected, it is of totally disconnected type, hence its boundary is totally disconnected, contradicting Corollary \ref{foc3}. So $\Isom(X)$ is of general type; the validity of Conjecture \ref{anaqi} then implies that $H$ is special.

% we need to show that for every purely real Heintze group $H$ of dimension $\ge 2$ quasi-isometric to a connected vertex-transitive finite valency graph $X$, the group $H$ is a closed cocompact subgroup of isometries of a rank one symmetric space. The hypothesis implies that $H$ is quasi-isometric to the totally disconnected group $\Isom(X)$. By Lemma \ref{amqi}, $\Isom(X)$ is either focal of totally disconnected type or of general type. The focal case is excluded by Corollary \ref{foc3}. So $\Isom(X)$ is of general type and we are in position to apply Conjecture \ref{anaqi}. Since we just checked that $H$ is not quasi-isometric to a tree, Conjecture \ref{anaqi} implies that $H$ has a continuous proper cocompact action on a rank 1 symmetric space of noncompact type.

Conversely assume Conjecture \ref{nnpr} holds. Let $G$ be quasi-isometric hyperbolic LC-groups $G_1,G_2$, with $G_1$ of general type and $G_2$ focal; we have to show that $G$ is special.
By Corollary \ref{milnqi}, $G_2$ is either of totally disconnected or connected type. In the first case, $G_2$ is special. Since by Proposition \ref{carspe} being special hyperbolic is a quasi-isometry among CGLC-groups, we deduce that $G$ is special. In the second case, by Proposition \ref{rhei}, $G_2$ is commable to a purely real Heintze group $H$. If by contradiction $H$ is not special, since it satisfies Conjecture \ref{nnpr}, by Lemma \ref{qico} $G_1$ is focal, a contradiction.
\end{proof}

\begin{remark}\label{focnsqi}
Another restatement of the conjectures is that the class of non-special focal hyperbolic LC-groups
is closed under quasi-isometries among CGLC-groups.
\end{remark}

%$\mathcal{C}_{\textnormal{foc}}^{\textnormal{c,ns}}$ (see Proposition \ref{cckc}) 

Let us now give a more precise conjecture.

\begin{conj}[Pointed sphere Conjecture]\label{nnprr}
Let $H$ be a purely real Heintze group of dimension $\ge 2$. Let $\omega$ be the $H$-fixed point in $\partial H$. If $H$ is not isomorphic to the minimal parabolic subgroup in any simple Lie group of rank one, then every quasi-symmetric self-homeomorphism of $\partial H$ fixes~$\omega$.
\end{conj}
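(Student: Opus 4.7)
The plan is to argue by contradiction. Suppose $H = N \rtimes \R$ is a purely real Heintze group of dimension at least $2$ which is not isomorphic to the minimal parabolic of any simple Lie group of rank one, and assume there is a quasi-symmetric homeomorphism $f \colon \partial H \to \partial H$ with $f(\omega) \ne \omega$. The goal is to derive that, on the contrary, $H$ must be such a parabolic, yielding a contradiction.

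The first step is to upgrade the single map $f$ to a transitive quasi-symmetric action. The nilpotent radical $N$ acts on $H$ by left translations as isometries, and the corresponding boundary action on $\partial H \setminus \{\omega\}$ is simply transitive by quasi-symmetric maps fixing $\omega$. Conjugating $f$ by elements of $N$ therefore produces, for any point $x \in \partial H \setminus\{\omega\}$, a quasi-symmetric self-homeomorphism sending $\omega$ to $x$. Combined with the $N$-action, the group $\mathrm{QS}(\partial H)$ of quasi-symmetric self-maps thus acts transitively on $\partial H$.

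The second step is to convert this transitivity into a rigidity constraint on $N$. Using Busemann functions based at distinct boundary points, each $y \in \partial H$ carries a visual quasi-metric modelling $\partial H \setminus \{y\}$ as a Carnot-gradable nilpotent group, with $N$ itself recovered at $y = \omega$. A quasi-symmetric map sending $\omega$ to $y$ then produces a quasi-symmetric homeomorphism between $N$ and the visual structure at $y$; by Pansu's differentiability theorem \cite{pansu89m}, its Pansu derivatives are graded Lie-algebra isomorphisms of the associated graded of $\mathfrak{n}$. One deduces that $N$ is Carnot-gradable and that its Carnot structure admits a rich group of quasi-conformal self-maps which, together with the inversion-type map provided by $f$, acts transitively on the one-point compactification $N \cup \{\infty\}$; this is the role usually played by the M\"obius group on the round sphere.

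The main obstacle, and the reason the statement is only conjectured, is the final classification step: one must show that a Carnot group $N$ whose quasi-conformal group acts transitively on $N \cup \{\infty\}$ is necessarily the nilradical of the Iwasawa decomposition of a simple Lie group of rank one. In the abelian case this is essentially the combined rigidity theorem of Tukia, R.~Chow, and Casson--Jungreis--Gabai; in the Heisenberg and quaternionic or octonionic Heisenberg settings it is Pansu \cite{pansu89m, pansu89d}; and Xie has extended such results to further Heintze groups with abelian $N$ but nontrivial eigenvalue spectrum. Once the classification is established, Theorem \ref{cqi} identifies $H$ up to quasi-isometry with the corresponding rank-one symmetric space, after which Theorem \ref{gw} (Gordon--Wilson) promotes this to an actual isomorphism between $H$ and the minimal parabolic, completing the contradiction. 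The hard part, then, is not the two reduction steps but the general Carnot-group rigidity assertion underlying the final step, which is exactly the source of the current gap in the subject.
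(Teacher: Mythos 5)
The statement you are asked about is not a theorem of the paper: it is Conjecture \ref{nnprr} (the pointed sphere conjecture), and the paper offers no proof of it, only partial results (Pansu's Theorem \ref{pansu69} for purely real Heintze groups not of Carnot type with diagonalizable $\R$-action, and Xie's Theorem \ref{cxx} for abelian $N$), together with a list of explicitly open cases such as $\mathsf{Hei}_3\rtimes\R$ with non-diagonalizable action of scalar diagonal part, and all Carnot-type cases $\Carn(N)$ with $N$ non-abelian and not Heisenberg. Your proposal therefore cannot be a proof, and indeed you concede as much: your ``final classification step'' (that a nilpotent group whose quasi-conformal self-maps act transitively on its one-point compactification must be an Iwasawa nilradical of a rank-one simple group) is precisely the open problem, not a known theorem one can cite. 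Stating the contradiction scheme and then outsourcing the entire difficulty to an unproven rigidity assertion is a reformulation of the conjecture, not an argument for it.

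Beyond that, the two ``reduction'' steps also have genuine defects. First, conjugating a single quasi-symmetric map $f$ with $f(\omega)\neq\omega$ by the boundary action of $N$ does give maps sending $\omega$ to any prescribed point, but with no uniform control on the distortion function: the conjugates $n\circ f\circ n^{-1}$ are quasi-symmetric with constants that blow up, so you do not obtain a uniform quasi-conformal group, and the Tukia/Chow/Pansu-type rigidity theorems you invoke (which are theorems about \emph{uniform} quasiconformal or convergence groups on the round or Heisenberg sphere) do not apply. Second, the appeal to Pansu's differentiability theorem is circular for the groups at stake: that theorem concerns quasi-conformal maps between Carnot--Carath\'eodory structures, whereas the parabolic visual boundary of a general purely real Heintze group carries a quasi-metric coming from a non-homogeneous one-parameter contraction; you cannot use Pansu differentiation to \emph{conclude} that $N$ is Carnot gradable, and in fact the hardest open cases of the conjecture (scalar diagonal part with nontrivial unipotent part, and Carnot type $\Carn(N)$) are exactly those where no such differentiation argument is currently available. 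So the proposal contains a genuine gap at its core and additional gaps in the steps presented as routine.
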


The justification of the name is that the boundary $\partial H$ naturally comes with a distinguished point; topologically this point is actually not detectable since the sphere is topologically homogeneous, but the quasi-symmetric structure ought to distinguish this point, at the notable exception of the cases for which it is known not to do so.

\begin{figure}[!t]
\includegraphics[scale=0.4]{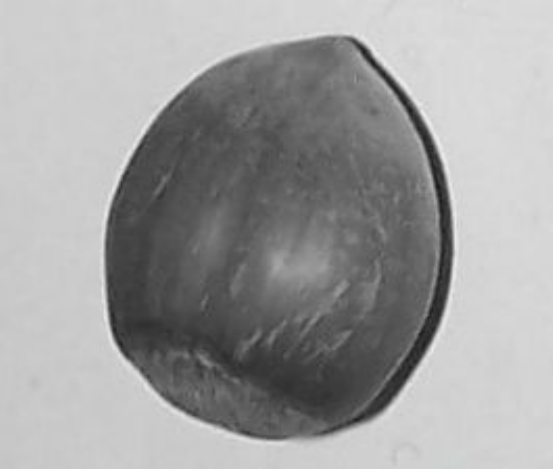}
\caption{A pointed 2-sphere: the boundary of a generic hyperbolic semidirect product $\R^2\rtimes\R$.}
\label{fig2}
\end{figure}

In turn, the pointed sphere Conjecture implies Conjecture \ref{nnpr}. More precisely:
\begin{prop}
Let $H$ satisfy the pointed sphere Conjecture. Then it also satisfies Conjecture \ref{nnpr}.
\end{prop}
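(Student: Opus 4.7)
The plan is to argue by contradiction: assume $H$ is a purely real Heintze group of dimension $\ge 2$ satisfying the pointed sphere Conjecture and not isomorphic to any minimal parabolic in a rank-one simple Lie group, yet quasi-isometric to a connected vertex-transitive finite-valency graph $X$. Set $G := \Isom(X)$. By Proposition \ref{tdg}, $G$ is a CGLC-group with a compact open subgroup; vertex-transitivity gives that $G$ is quasi-isometric to $X$ and hence to $H$, so $G$ is hyperbolic. The strategy is to first prove $G$ is of general type, then push the fixed-point-free $G$-action on $\partial G$ forward to $\partial H$, contradicting the pointed sphere Conjecture.

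Via the quasi-symmetric homeomorphism $\partial G \to \partial H$ induced by the quasi-isometry, $\partial G$ is a sphere of dimension $\dim(H)-1 \ge 1$, hence connected and uncountable. So $G$ is neither compact nor $2$-ended, and by the classification of \S\ref{tyh} is either focal or of general type. The core step is to rule out the focal case. The sphere topology of $\partial G$ eliminates the totally disconnected and mixed focal types via Corollary \ref{corxkt} (whose boundaries are respectively a Cantor set and a Cantor bunch of spheres), so only focal of connected type remains. Invoking Proposition \ref{foco}, one would then have $G/W \cong N \rtimes \mathbb{R}$ or $N \rtimes \mathbb{Z}$ with $W$ compact normal and $N$ virtually connected Lie. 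Since $G$ has a compact open subgroup and $W$ is compact, $G/W$ also has a compact open subgroup; but $G/W$ is Lie, and a Lie group with a compact open subgroup is compact-by-discrete (its identity component is both closed and contained in the compact open subgroup, and the quotient by it is then discrete). This rules out the $\R$ case immediately, since $N\rtimes\R$ has non-discrete identity component; in the $\Z$ case it forces $N$ (virtually connected Lie with compact identity component) to be compact, whence $G/W$ is compact-by-$\Z$ and $G$ is $2$-ended by Corollary \ref{2ended}, contradicting the positive-dimensional sphere boundary.

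Thus $G$ must be of general type, so its action on $\partial G$ is minimal and in particular has no global fixed point (\S\ref{tyh}). Conjugating this action by the quasi-symmetric homeomorphism $\partial G \to \partial H$ yields a homomorphism from $G$ into the group of quasi-symmetric self-homeomorphisms of $\partial H$ whose image still has no global fixed point. But by hypothesis every quasi-symmetric self-homeomorphism of $\partial H$ fixes the distinguished point $\omega$, so $\omega$ is a global fixed point of this transported action, the desired contradiction. I expect the main obstacle to be the focal-of-connected-type elimination step, where one must use the compact-open-subgroup hypothesis to contradict the structural form of Proposition \ref{foco}; once $G$ is known to be of general type, the boundary-dynamics transport via quasi-symmetry is essentially routine.
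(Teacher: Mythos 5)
Your proof is correct and follows essentially the same route as the paper: transport the boundary action of the graph's automorphism group to $\partial H$ through the quasi-symmetric conjugacy and play the pointed-sphere fixed point off against the fact that a totally disconnected (compact-open-subgroup) group cannot be focal with sphere boundary. The only difference is organizational: the paper gets the contradiction in one line via Corollary \ref{foc3} (the transported action fixes a point, so $\Aut(X)$ would be focal of totally disconnected type, which is QI-incompatible with $H$), whereas you re-derive that incompatibility by hand from Proposition \ref{foco} before contradicting the absence of a fixed point for a general-type group.
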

\begin{proof}
Let $H$ satisfy the pointed sphere Conjecture. If $H$ is minimal parabolic, both conjectures are tautological, so assume the contrary.
Assume by contradiction that $H$ is QI to a vertex-transitive graph $X$ of finite valency. The action of $\Isom(X)$ on its boundary is conjugate by a quasi-symmetric map to a quasi-symmetric action on $\partial H$. By assumption, this action fixes a point. So $\Isom(X)$ is focal of totally disconnected type, contradicting Corollary \ref{foc3}.
\end{proof}

\begin{thm}[Pansu, Cor.~6.9 in \cite{pansu89d}]
Consider a purely real Heintze group $N\rtimes\R$, not of Carnot type (see Definition \ref{d_carnot}). Suppose in addition that the action of $\R$ on the Lie algebra $\mathfrak{n}$ is diagonalizable. Then $H$ satisfies the pointed sphere Conjecture.
\label{pansu69}\end{thm}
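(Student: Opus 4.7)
Plan. The strategy, due to Pansu, is to distinguish $\omega$ from the other boundary points by an infinitesimal quasi-symmetric invariant, namely the bilipschitz class of the Pansu blowup (tangent cone). I will show that under the non-Carnot hypothesis the tangent cones at $\omega$ and at any other point of $\partial H$ are inequivalent, so no quasi-symmetric self-homeomorphism can move $\omega$.

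Step 1 (Setup). Identify $\partial H\smallsetminus\{\omega\}$ with $N$ via the simply transitive $N$-action. The parabolic visual metric $d_v$ on $\partial H$ restricts to a left-$N$-invariant homogeneous quasi-metric on $N$; the one-parameter group $(\delta_t)_{t\in\R}$ from the $\R$-factor of $H$ fixes $\omega$ and dilates $d_v$ by a factor $e^t$. Using diagonalizability, fix a direct sum decomposition $\mathfrak{n}=\bigoplus_{\lambda}V_\lambda$ into eigenspaces of $\delta_1$, with weights $\lambda>0$; normalize so that the smallest weight equals $1$.

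Step 2 (Two tangent cones). At a point $p\in N$, combining $N$-translations with dilations $\delta_t$ for $t\to-\infty$ realizes the Pansu blowup of $(N,d_v)$ at $p$ as the Carnot group $\Carn(N)$ built from the associated graded algebra $\textnormal{grad}(\mathfrak{n})$ (cf.\ Remark~\ref{grada}), equipped with its Carnot--Carath\'eodory quasi-metric $d_{CC}$. At $\omega$, the pointed space $(\partial H,d_v,\omega)$ is \emph{genuinely} self-similar under $(\delta_t)$, so its tangent cone is bilipschitz to $(\partial H,d_v,\omega)$ itself, equivalently (away from the fixed basepoint) to the homogeneous space $(N,d_v)$.

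Step 3 (Pansu's rigidity --- the key step). Because $H$ is not of Carnot type, the $\R$-action on $\mathfrak{n}/[\mathfrak{n},\mathfrak{n}]$ is non-scalar. Under diagonalizability this forces the $\R$-weight grading on $\mathfrak{n}$ to disagree with the lower-central grading; in particular $\mathfrak{n}\not\simeq\textnormal{grad}(\mathfrak{n})$ as graded Lie algebras. Pansu's differentiability theorem for Lipschitz maps between Carnot groups, applied to any putative bilipschitz map $(N,d_v)\to(\Carn(N),d_{CC})$, produces at almost every point a graded Lie algebra isomorphism $\textnormal{grad}(\mathfrak{n})\to\textnormal{grad}(\mathfrak{n})$ intertwining the two $\R$-weight decompositions; tracking weights then promotes this to a graded isomorphism $\mathfrak{n}\simeq\textnormal{grad}(\mathfrak{n})$, contradicting the non-Carnot hypothesis. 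Consequently $(N,d_v)\not\sim_{\mathrm{BL}}(\Carn(N),d_{CC})$.

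Step 4 (Conclusion). A quasi-symmetric self-map $f$ of $\partial H$ sends the tangent cone at $x$ to a space bilipschitz-equivalent to the tangent cone at $f(x)$; the homogeneity of both source and target under $N$ and $\R$ upgrades the a priori quasi-symmetric tangent equivalence to a genuine bilipschitz equivalence of blowups. If we had $f(\omega)\in N$, Step~2 would yield $(N,d_v)\sim_{\mathrm{BL}}(\Carn(N),d_{CC})$, contradicting Step~3. Hence $f(\omega)=\omega$, which is the pointed sphere Conjecture for $H$.

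Main obstacle. Step~3 is the technical heart. It relies on Pansu's theorem that bilipschitz maps between Carnot groups are differentiable almost everywhere with Carnot-group-isomorphism derivatives, together with an eigenvalue bookkeeping argument that needs diagonalizability to produce an honest $\R$-grading on $\mathfrak{n}$ (rather than a mere filtration by generalized eigenspaces). Without diagonalizability, the comparison of weight data must be carried out in filtered rather than graded terms, which is why the present statement is confined to the diagonalizable case.
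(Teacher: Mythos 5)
There is a genuine gap, and it is fatal to the strategy as written. In Step 2 you compute both blowups incorrectly. The parabolic visual quasi-metric $d_v$ on $\partial H\smallsetminus\{\omega\}\cong N$ can be taken left-$N$-invariant and \emph{exactly} homogeneous under the dilations $\delta_t=e^{t\alpha}$ coming from the diagonalizable derivation; consequently the Pansu blowup of $(N,d_v)$ at any finite boundary point is $(N,d_v)$ itself (a Folland--Stein homogeneous group for the $\alpha$-weight dilations), \emph{not} the Carnot group $\Carn(N)$ built from the lower central series. The nilpotentization by the lower central series arises when one blows up a left-invariant Riemannian or Carnot--Carath\'eodory metric on $N$, but that metric is not in the quasi-symmetric gauge of $d_v$ unless $H$ is of Carnot type. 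A concrete check: for $H=\R^2\rtimes\R$ with weights $(1,2)$ (diagonalizable, not of Carnot type, so within the hypotheses), $(N,d_v)$ is Ahlfors $3$-regular while $\Carn(N)=\R^2$ with its Euclidean metric is $2$-regular, so the tangent cone at a finite point cannot be $\Carn(N)$. Similarly, at $\omega$ the self-similarity holds for the \emph{inverted} metric, so the blowup there is the metric inversion of $(N,d_v)$, not $(N,d_v)$. With the corrected computations, both cones are built from the same group $N$ with the same weight structure, and the dichotomy your Steps 3--4 rely on disappears; distinguishing $\omega$ from the finite points is precisely the hard content of the theorem, and it does not fall out of a tangent-cone comparison.

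Step 3 contains a second error independent of the first: ``not of Carnot type'' does not imply $\mathfrak{n}\not\simeq\textnormal{grad}(\mathfrak{n})$ as (graded) Lie algebras. It is a condition on the derivation, not on the Lie algebra: in the example above $N$ is abelian, so $\mathfrak{n}\cong\textnormal{grad}(\mathfrak{n})$ trivially, yet the group is not of Carnot type and the theorem must apply to it (this is exactly the regime of Theorem \ref{cxx}); your Step 3 would produce no contradiction there. Moreover, Pansu's Rademacher-type differentiability theorem is a statement about maps between Carnot groups equipped with Carnot--Carath\'eodory metrics, and $(N,d_v)$ is not such a space when the weight-one layer does not generate, so the theorem cannot be invoked for a putative bilipschitz map $(N,d_v)\to(\Carn(N),d_{CC})$ as stated. (A smaller issue: in Step 4 a quasi-symmetric map only gives quasi-symmetric, not bilipschitz, equivalence of blowups; the claimed upgrade by homogeneity is asserted rather than proved.) For the record, the paper itself gives no proof of this statement --- it is quoted from Pansu's paper \cite{pansu89d} (Cor.~6.9), whose argument proceeds through quasiconformal analysis and conformal-dimension--type invariants on these homogeneous boundaries rather than through a comparison of tangent cones, so the proposal also does not reflect the actual source.
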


In case $N$ is abelian, the assumption is that the contracting action of $\R$ is not scalar (in order to exclude minimal parabolic subgroups in $\textnormal{PO}(n,1)=\Isom(\mathbf{H}^n_\R)$). 

\begin{thm}[Carrasco \cite{carr}]
%[X.~Xie \cite{xx}]\label{cxx}
The pointed sphere Conjecture for all purely real Heintze groups that are not of Carnot type.
%$H=N\rtimes\R$ holds when $N$ is abelian.
\end{thm}

In the particular case of Heintze groups of the form $H=N\rtimes\R$ holds with $N$ abelian, this was previously proved by X.~Xie \cite{xx} with different methods.

Theorem \ref{pansu69} covers all cases when $H$ has dimension 3 (i.e.\ $N$ has dimension 2), with the exception of the semidirect product $\R^2\rtimes\R$ with action by the 1-parameter group $(U_t)_{t\in\R}$ where $U_t=\begin{pmatrix} e^t & te^t\\ 0 & e^t\end{pmatrix}$, which is dealt with specifically in \cite{xx2}. 

In Xie's Theorem (the pointed sphere conjecture for $H=N\rtimes\R$ with $N$ abelian), the most delicate case is that of an action with scalar diagonal part and nontrivial unipotent part; it is not covered by Pansu's Theorem \ref{pansu69}. In turn, the first examples of purely real Heintze groups not covered by Xie's theorem are semidirect products $\mathsf{Hei}_3\rtimes\R$, where $\mathsf{Hei}_3$ is the Heisenberg group, with the exclusion of the minimal parabolic subgroup $\Carn(\mathsf{Hei}_3)$ (see Remark \ref{grada}) in $\textnormal{PU}(2,1)=\Isom(\mathbf{H}^2_\C)$. If the action on $(\mathsf{Hei}_3)_\textnormal{ab}=\mathsf{Hei}_3/[\mathsf{Hei}_3,\mathsf{Hei}_3]$ has two distinct eigenvalues, then Pansu's Theorem \ref{pansu69} applies. The remaining case of the pointed sphere Conjecture for $\dim(H)=4$ is the one for which the action on $(\mathsf{Hei}_3)_\textnormal{ab}$ has is not diagonalizable but has scalar diagonal part, and is covered by Carrasco's theorem.

%Note that if we specify the pointed sphere to purely real Heintze groups of Carnot type $\Carn(N)$, none of the previous results is applicable. 

The remaining cases of the pointed sphere conjecture are now those Heintze groups of Carnot type $\Carn(N)$.
Note that if $N$ is abelian or is a generalized $(2n+1)$-dimensional Heisenberg group $\mathsf{Hei}_{2n+1}$ (characterized by the fact its 1-dimensional center equals its derived subgroup), then $\Carn(N)$ is minimal parabolic. Therefore, the first test-cases would be when $N$ is a non-abelian 4-dimensional simply connected nilpotent Lie group: there are 2 such Lie groups up to isomorphism: the direct product $\mathsf{Hei}_3\times\R$ and the filiform Lie group $\mathsf{Fil}_4$, which can be defined as $(\R[x]/x^3)\rtimes\R$ where $t\in\R$ acts by multiplication by $(1+x)^t=1+tx+\frac{t(t-1)}{2}x^2$.

\subsection{Conjecture \ref{int_con} and quasi-symmetric maps}

An important tool, given a geodesic hyperbolic space and $\omega\in\partial X$, is the visual parabolic metric on the ``parabolic boundary" $\partial X\smallsetminus\{\omega\}$. Note that unlike the visual metric, these are generally unbounded. It shares the property that any quasi-isometric embedding $f:X\to Y$ induces a quasi-symmetric embedding $\partial X\smallsetminus\{\omega\}\to\partial Y\smallsetminus\{\bar{f}(\omega)\}$. This follows from the corresponding fact for visual metrics and the fact that the embedding $\partial X\smallsetminus\{\omega\}\subset \partial X$ is quasi-symmetric \cite[Section 5]{SX}.

The parabolic boundaries and the quasi-symmetric homeomorphisms between those are therefore important tools in the study of quasi-isometry classification and notably Conjecture \ref{int_con}. Let us include the following simple lemma.

\begin{lem}
Let $G,G'$ be focal hyperbolic LC-groups and $\omega,\omega'$ the fixed points in their boundary. Suppose that there exists a quasi-isometry $f:G\to G'$. Then there exists a quasi-isometry $u:G\to G'$ such that $\bar{u}$ maps $\omega$ to $\omega'$.
\end{lem}
\begin{proof}
If $\bar{f}(\omega)=\omega'$ there is nothing to do.  Otherwise, since $G$ is transitive on $\partial G\smallsetminus\{\omega\}$, there is a left translation $v$ on $G$ such that $\bar{v}(\bar{f}^{-1}(\omega'))\neq\bar{f}^{-1}(\omega')$. Then $\bar{f}\bar{v}\bar{f}^{-1}=\overline{fvf^{-1}}$ is a quasi-symmetric self-homeomorphism of $\partial G'$ not fixing $\omega'$, so the group of quasi-isometries generated by $fvf^{-1}$ and by left translations of $G'$ is transitive on $\partial G'$. Thus after composition of $f$ by a suitable quasi-isometry in this group, we obtain a quasi-isometry $u$ such that $\bar{u}$ maps $\omega$ is mapped to $\omega'$.   
\end{proof}

%\begin{remark}The same easy proof shows that \end{remark}

% Otherwise, since $G'$ is transitive on $\partial G'\smallsetminus\{\omega'\}$, there is a left translation $v$ on $G'$ such that $\bar{v}(\bar{f}(\omega))\neq\bar{f}(\omega)$. Then $\bar{f}^{-1}\bar{v}\bar{f}$ is a quasi-symmetric self-homeomorphism of $\partial G$ not fixing $\omega$, so the group of quasi-symmetric self-homeomorphisms of $G$ \end{proof}

If $G$ is a focal hyperbolic LC-group of connected type, with no nontrivial compact normal subgroup, and $N$ is its connected nilpotent radical, then the action of $N$ on $\partial G\smallsetminus\{\omega\}$ is simply transitive and the visual parabolic metric induces a left-invariant distance on $N$. When $G$ is of Carnot type (see \S\ref{scarnot}), this distance is equivalent (in the bilipschitz sense) to the Carnot-Caratheodory metric. In general I do not know how to describe it directly on $N$.

\subsection{Further aspects of the QI classification of hyperbolic LC-groups}This final subsection is much smaller than it should be. It turns around the general question: how is the structure of a hyperbolic LC-group of general type related to the topological structure of its boundary?

The following theorem is closely related, in the methods, to the QI classification of groups quasi-isometric to the hyperbolic plane. It should be attributed to the same authors, namely Tukia, Gabai, and Casson-Jungreis, to which we need to add Hinkkanen in the non-discrete case.

\begin{thm}
Let $G$ be a hyperbolic LC-group. Then $\partial G$ is homeomorphic to the circle if and only if $G$ has a continuous proper isometric cocompact action on the hyperbolic plane.
\end{thm}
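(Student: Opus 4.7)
The ``if'' direction is straightforward from the framework recalled in \S\ref{s_qii}: a continuous, proper, cocompact isometric action of $G$ on $\mathbf{H}^2_\R$ makes $G$ quasi-isometric to $\mathbf{H}^2_\R$ by Milnor--Schwarz, and hence $\partial G$ is homeomorphic to $\partial\mathbf{H}^2_\R\simeq S^1$.

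For the converse, suppose $\partial G\simeq S^1$; the plan is to reduce to the convergence group theorem. First, I would observe that the boundary action of $G$ on $\partial G$ is a \emph{uniform convergence action}: the induced continuous action on the space of ordered triples of pairwise distinct boundary points is proper and cocompact. This is the standard boundary-to-barycentre correspondence, following from the proper cocompact action of $G$ on its Cayley graph together with the uniform existence of quasi-centres for triples at infinity in a hyperbolic space, and it holds in both the focal and the general-type cases.

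Next, I would invoke the convergence group theorem to promote this to a topological conjugation: in the discrete setting, the Tukia--Casson--Jungreis--Gabai theorem asserts that any uniform convergence group on $S^1$ is topologically conjugate to a cocompact Fuchsian group; Hinkkanen's extension supplies the corresponding statement without a discreteness assumption, which is what is needed here. This produces a homeomorphism $\varphi\colon\partial G\to\partial\mathbf{H}^2_\R$ and a homomorphism $\rho\colon G\to\Isom(\mathbf{H}^2_\R)$ intertwining the two boundary actions. Continuity of $\rho$ follows because the induced map $G\to\Homeo(S^1)$ is continuous and $\Isom(\mathbf{H}^2_\R)$ sits as a closed subgroup of $\Homeo(S^1)$ with compatible topology; in case a subtlety arises here, an avatar of Proposition \ref{bbcon} finishes the job since $\mathbf{H}^2_\R$ admits no nontrivial bounded-displacement isometry.

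It remains to verify that $\rho$ is proper with cocompact image. The kernel $W=\ker(\rho)$ is precisely the kernel of the $G$-action on $\partial G$; since $\partial G$ is infinite and $G$ acts properly cocompactly on its Cayley graph, such a kernel has uniformly bounded displacement on the Cayley graph and is therefore a compact normal subgroup. Properness and cocompactness of the induced action of $G/W$ on $\mathbf{H}^2_\R$ then follow from the uniformity of the convergence action via the quasi-centre/circumcentre correspondence, upgrading $\rho$ to a continuous, proper, cocompact isometric action of $G$ itself on $\mathbf{H}^2_\R$. The main obstacle is the convergence group theorem, and specifically its non-discrete extension due to Hinkkanen; the other ingredients---uniformity of the boundary triple action, compactness of the boundary-kernel, and the final promotion from a topological conjugation to a geometric action---are standard items in the hyperbolic dictionary.
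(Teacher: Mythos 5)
The crux of your argument is the middle step, and there is a genuine gap there. The Tukia--Casson--Jungreis--Gabai theorem is a statement about \emph{discrete} convergence groups, and there is no purely topological ``uniform convergence group theorem on the circle without a discreteness assumption'' available for you to quote; in particular, Hinkkanen's theorem \cite{Hin} is not such a statement. What Hinkkanen proves concerns \emph{non-discrete groups of uniformly quasisymmetric homeomorphisms} of the circle (or line): such a group is conjugate into the M\"obius group. To use it you must therefore bring the quasi-symmetric structure of $\partial G$ into play: with respect to a visual metric, $G$ acts on $\partial G$ by uniformly quasi-symmetric homeomorphisms, and it is this uniformity---not the convergence property---that feeds Hinkkanen's theorem. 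Your proposal never mentions quasisymmetry, so as written its non-discrete half rests on a theorem that does not exist in the stated form. Moreover, the relevant dichotomy is not discreteness of $G$ but discreteness of the image of the boundary homomorphism $\alpha\colon G\to\Homeo(\partial G)$: since $\Ker(\alpha)$ is compact, $\alpha(G)$ is non-discrete exactly when $G$ is not compact-by-discrete, and only then does Hinkkanen's theorem (which requires non-discreteness) apply; the compact-by-discrete case has to be reduced to the discrete theorem by passing to $G/K$ for a compact open normal subgroup $K$. This is how the paper proceeds: the compact-by-discrete case via the discrete result \cite[Theorem 5.4]{kabe}, and the remaining case via the uniformly quasi-symmetric boundary action and \cite{Hin}, landing in $\mathrm{PGL}_2(\R)$.

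Apart from this, your scaffolding is reasonable and your endgame is a legitimate alternative to the paper's: you recover properness and cocompactness of the action on $\mathbf{H}^2_\R$ from the proper cocompact action on distinct boundary triples through the circumcentre map, and your claim that the triple action is proper and cocompact even in the focal case is correct---provided you replace the Cayley graph, which is not a proper metric space, by a proper geodesic model space as in \cite[\S 2]{CCMT}. The paper instead concludes by structure theory: $G/\Ker(\alpha)$ injects continuously into $\mathrm{PGL}_2(\R)$, hence is a Lie group, and one argues separately in the focal case (a millefeuille space with circle boundary forces the hyperbolic plane) and in the general type case (a non-amenable, virtually connected Lie group whose image in $\mathrm{PGL}_2(\R)$ has index at most $2$). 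Repairing the Hinkkanen step as above would make your route viable, but without the quasi-symmetric input the non-discrete case is unproved.
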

\begin{proof}
The best-known (and hardest!) case of the theorem is when $G$ is discrete. See for instance \cite[Theorem 5.4]{kabe}. It immediately extends to the case when $G$ is compact-by-discrete (i.e.\ has an open normal compact subgroup).

Assume now that $G$ is not compact-by-discrete.
Consider the action $\alpha:G\to\Homeo(\partial G)$ on its boundary by quasi-symmetric self-homeomorphisms. Endow $\Homeo(\partial G)$ with the compact-open topology, which (for some choice of metric on $\partial G$) is the topology of uniform convergence; the function $\alpha$ is continuous. Since $G$ is non-elementary hyperbolic, the kernel of $\alpha$ is compact, and therefore by assumption the image $\alpha(G)$ is non-discrete. We can then apply Hinkkanen's theorem about non-discrete groups of quasi-symmetric self-homeomorphisms \cite{Hin}: 
after fixing a homeomorphism identifying $\partial G$ with the projective line, $\alpha(G)$ is contained in a conjugate of $\textnormal{PGL}_2(\R)$. Thus after conjugating, we can view $\alpha$ as a continuous homomorphism with compact kernel from $G$ to $\textnormal{PGL}_2(\R)$. It follows that $H=G/\Ker(\alpha)$ is a Lie group. Since $G$ is not compact-by-discrete, we deduce that the identity component $H^\circ$ is noncompact.

If $H$ is focal, then it acts continuously properly isometrically cocompactly on a millefeuille space $X[k]$; the boundary condition implies that $k=1$ (so $X[k]=X$) and $X$ is 2-dimensional, hence is the hyperbolic plane. Otherwise $H$ is of general type, and since $H^\circ$ is noncompact we deduce that $H$ is a virtually connected Lie group; being of general type it is not amenable, hence not solvable and we deduce that the image of $H$ in $\textnormal{PGL}_2(\R)$ has index at most~2. Thus in all cases $G$ admits a continuous proper cocompact isometric action on the hyperbolic plane.
\end{proof}

\begin{conj}\label{qinc}
Let $G$ be a compactly generated locally compact group. Suppose that $G$ is quasi-isometric to a negatively curved homogeneous Riemannian manifold $X$. Then $G$ is compact-by-Lie.
\end{conj}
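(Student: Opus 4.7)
The plan is to split according to the type of $G$ as a hyperbolic locally compact group --- compact, lineal, focal, or general type, as recalled in \S\ref{tyh} --- using that $G$ is hyperbolic (being CGLC and QI to the geodesic hyperbolic space $X$) and that $\partial G$ is homeomorphic to a $(d-1)$-sphere, where $d = \dim X$. If $d = 0$ then $G$ is compact, and if $d = 1$ then $G$ is two-ended so Corollary \ref{2ended} delivers the compact-by-Lie conclusion. So I may assume $d \geq 2$, whereupon $\partial G$ is uncountable, placing $G$ into either the focal or the general-type case.

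\emph{Focal case.} Theorem \ref{ccmt_mil} furnishes a continuous, proper, cocompact isometric action of $G$ on some millefeuille space $X'[k]$. The description of $\partial X'[k]$ in Proposition \ref{bdmi} together with Corollary \ref{corxkt} shows that this boundary is a sphere of dimension $\geq 1$ precisely when $k=1$ (for $k \geq 2$ it fails to be locally connected). Hence $X'=X'[1]$ is a homogeneous simply connected negatively curved manifold, $G$ is focal of connected type, and Proposition \ref{foco} gives that $G$ is compact-by-Lie.

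\emph{General-type case.} Applying Lemma \ref{heip} to $\Isom(X)$ (which is focal of connected type by Lemma \ref{chca} in this subcase, since $G$ general type precludes $X$ from being symmetric, as otherwise Theorem \ref{cqi} would apply directly), $X$ is quasi-isometric to a purely real Heintze group $H$, and so is $G$. If $H$ is isomorphic to a minimal parabolic in the isometry group of a rank 1 symmetric space $Y$, then Theorem \ref{cqi} exhibits a continuous, proper, cocompact isometric action of $G$ on $Y$, and $\Isom(Y)$ being Lie gives $G$ compact-by-Lie. Otherwise, I would invoke the Pointed Sphere Conjecture \ref{nnprr} for $H$: the boundary homeomorphism $\partial G \simeq \partial H$ is quasi-symmetric, and the action of $G$ on itself by left translations conjugates through it to a quasi-symmetric action of $G$ on $\partial H$. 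By \ref{nnprr}, every such quasi-symmetric self-homeomorphism of $\partial H$ fixes the distinguished point $\omega_H$, whence $G$ would fix the preimage of $\omega_H$ in $\partial G$, contradicting the defining property of the general-type case (minimality of the $G$-action on $\partial G$).

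The main obstacle is therefore the Pointed Sphere Conjecture \ref{nnprr} itself, outside the cases already handled by Pansu's Theorem \ref{pansu69} (diagonalizable, non-Carnot) and Xie's Theorem \ref{cxx} ($N$ abelian). The delicate remaining territory includes the Carnot-type Heintze groups $\Carn(N)$ for $N$ beyond the abelian and generalized Heisenberg cases (for instance $\Carn(\mathsf{Hei}_3 \times \R)$) and non-diagonalizable, non-Carnot cases such as $\mathsf{Hei}_3 \rtimes \R$ with scalar diagonal part on the abelianization. Any proof of Conjecture \ref{qinc} beyond these known cases would need fresh quasi-symmetric rigidity for spheres arising as Heintze boundaries.
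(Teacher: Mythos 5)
The statement you are addressing is an open conjecture, and the paper does not prove it either: it only records the conditional result that Conjecture \ref{anaqi} implies Conjecture \ref{qinc}. Your argument is likewise conditional, and you are explicit about it: what you actually establish is that the Pointed Sphere Conjecture \ref{nnprr} (for the purely real Heintze group attached to $X$) implies Conjecture \ref{qinc}. Since the paper shows that Conjecture \ref{nnprr} implies Conjecture \ref{nnpr}, which by Proposition \ref{concon} is equivalent to Conjecture \ref{anaqi}, your hypothesis is formally at least as strong as the one the paper conditions on; you could weaken it to Conjecture \ref{nnpr} by invoking Lemma \ref{qico} in place of the boundary argument. The unconditional parts of your reduction are correct and agree with the paper where they overlap: the cases $d\le 1$ via Corollary \ref{2ended}, the focal case via Proposition \ref{foco} (your detour through Theorem \ref{ccmt_mil} is unnecessary, since Proposition \ref{foco} directly equates ``boundary is a sphere'' with connected type, hence compact-by-Lie), and the case where $X$ is symmetric via Theorem \ref{cqi}. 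Where you genuinely diverge is the general-type case: the paper observes that $X$, hence $G$, is quasi-isometric to an amenable group (Heintze), applies Conjecture \ref{anaqi} to get a continuous proper cocompact action on a rank~1 symmetric space (trees being excluded by the sphere boundary), and concludes; you instead pass to a purely real Heintze group $H$ via Lemma \ref{heip} and use quasi-symmetric rigidity of $\partial H$ to contradict minimality of the $G$-action on the boundary --- the same mechanism the paper uses to show \ref{nnprr}$\Rightarrow$\ref{nnpr} and in Proposition \ref{milnqi} --- which is sound. One phrasing slip: ``$G$ general type precludes $X$ from being symmetric'' is false as stated (a general-type group can perfectly well be quasi-isometric to a rank~1 symmetric space); what saves you is that you dispose of the symmetric case separately by Theorem \ref{cqi}, so the structure of the argument is unaffected. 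In short: a correct conditional reduction by a somewhat different route, but, as you note yourself, not a proof of the conjecture, which remains open beyond the cases covered by Theorems \ref{pansu69} and \ref{cxx}.
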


\begin{prop}
Conjecture \ref{qinc} is implied by Conjecture \ref{anaqi}.
\end{prop}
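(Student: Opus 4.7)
The plan is to assume Conjecture \ref{anaqi} and deduce the compact-by-Lie conclusion for $G$ by splitting on whether $G$ is amenable. Write $d = \dim X$; note that $d \ge 2$, since a $1$-dimensional Riemannian manifold carries no sectional curvature. Since $X$ is Gromov hyperbolic (a standard consequence of homogeneous negative curvature) and hyperbolicity is a QI-invariant among geodesic metric spaces, $G$ is a hyperbolic CGLC-group. Any quasi-isometry $G \to X$ induces a homeomorphism $\partial G \to \partial X$, and $\partial X$ is the $(d-1)$-sphere; in particular $\partial G$ is connected, uncountable, and not a Cantor space.

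I would first dispatch the amenable case. If $G$ is amenable, then $G$ is a non-elementary amenable hyperbolic CGLC-group, hence focal. Since $\partial G$ is a sphere, Proposition \ref{foco} places $G$ in the connected type, and its last item yields a maximal compact normal subgroup $W \subset G$ with $G/W$ isomorphic to a semidirect product $N \rtimes \Z$ or $N \rtimes \R$, where $N$ is a virtually connected Lie group. Thus $G/W$ is a Lie group and $G$ is compact-by-Lie.

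For the non-amenable case, I would apply Lemma \ref{heip} to produce a purely real Heintze group $H$ acting simply transitively by isometries on $X$; then $H$ is an amenable CGLC-group quasi-isometric to $X$, hence to $G$. Therefore the hypotheses of Conjecture \ref{anaqi} are met (amenable $H$ and non-amenable $G$, both QI to $G$), and the conjecture yields a continuous proper cocompact isometric action of $G$ on either a rank one symmetric space $Y$ of noncompact type or on a regular tree of valency $\ge 3$. The tree alternative is immediately ruled out, since the boundary of such a tree is a Cantor space whereas $\partial G$ is a sphere. So $G$ acts continuously, properly, cocompactly by isometries on $Y$; the resulting homomorphism $G \to \Isom(Y)$ has compact kernel by properness, and closed image in the Lie group $\Isom(Y)$, which is Lie by Cartan's closed subgroup theorem. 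Hence $G$ is compact-by-Lie.

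I do not foresee any substantial obstacle: the argument is a direct chaining of Conjecture \ref{anaqi} with the structural results already collected in the paper. The only step requiring a little care is the use of QI-invariance of the topological type of $\partial G$ to discard the tree alternative in the non-amenable case, which is precisely what the boundary identification in the first paragraph sets up.
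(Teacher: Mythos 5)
Your argument is correct and is essentially the paper's own proof: you split into the amenable (hence focal) case, settled by Proposition \ref{foco}, and the non-amenable case, where a simply transitive solvable isometry group of $X$ makes $G$ quasi-isometric to an amenable CGLC-group, so Conjecture \ref{anaqi} yields a continuous proper cocompact action on a rank one symmetric space (the tree alternative being excluded by the sphere boundary), whence $G$ is compact-by-Lie. The only quibble is a citation slip: the simply transitive (purely real) Heintze group acting on $X$ should be obtained from Heintze's \cite[Proposition 1]{Hein} (as the paper does, and as is used inside the proof of Lemma \ref{heip}), since Lemma \ref{heip} as stated takes a focal CGLC-group, not a manifold, as its input.
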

\begin{proof}
Suppose Conjecture \ref{anaqi} holds. Let $G$ be as in Conjecture \ref{qinc}; its boundary is therefore a sphere. If $G$ is focal, then it is compact-by-Lie by Proposition \ref{foco}. Otherwise it is of general type. Since $\Isom(X)$ contains a cocompact solvable group by \cite[Proposition 1]{Hein}, we deduce from Conjecture \ref{anaqi} that $G$ has a continuous proper isometric action on a symmetric space, which implies that it is compact-by-Lie.
\end{proof}

It is a general fact that if the boundary of a non-focal hyperbolic LC-group contains an open subset homeomorphic to $\R^n$ for some $n\ge 0$, then the boundary is homeomorphic to the $n$-sphere: see \cite[Theorem 4.4]{kabe}
(which deals with the case of finitely generated groups and $n\ge 2$ but the proof works without change in this more general setting).

%
%NE PAS EFFACER:						%NE PAS EFFACER:
%
% si elementaire: l'hyp implique frontiere non vide, donc on a lineal, donc 0-sphere.
% si non elementaire: l'hyp implique type general, donc par minimalite on a une var topologiqueet de plus on a un hyperbolique qui fait tourner l'argument
% si n=1 je vois pas ce qui met l'argument en defaut mais comme on a une 1-variete compacte, c'est une reunion finie de cercles, mais 
% en comptant le nb de bouts, on en a 1 ou 2 et un groupe a 2 bouts etant virtuellement Z, c'est exclu, donc on a le cercle.
%
%NE PAS EFFACER:						%NE PAS EFFACER:
%

\begin{question}\label{bounds}
Let $G$ be a hyperbolic locally compact group whose boundary is homeomorphic to a $d$-sphere. Is $G$ necessarily compact-by-Lie?
\end{question}

A positive answer to Question \ref{bounds} would imply Conjecture \ref{qinc}, but, on the other hand would only be a very partial answer to determining which hyperbolic LC-groups admit a sphere as boundary, boiling down the question to the case of discrete groups. At this point, Question \ref{bounds} is open for all $d\ge 4$. Still, it has a positive answer for $d\le 3$ (and similarly Conjecture \ref{qinc} has a positive answer for $\dim(X)\le 4$), by the positive solution to the Hilbert-Smith conjecture in dimension $d\le 3$, which is due to Montgomery-Zippin for $d=1,2$ and J.~Pardon for $d=3$.

Interestingly, in the Losert characterization of CGLC-groups with polynomial growth \cite{Losert}, the most original part of the proof precisely consists in showing that such a group is compact-by-Lie.

\subsection{Beyond the hyperbolic case}

Here is a far reaching generalization of Conjecture \ref{int_con}. We call {\em real triangulable Lie group} a Lie group isomorphic to a closed connected subgroup of the group of upper triangular real matrices in some dimension. Equivalently, this is a simply connected Lie group whose Lie algebra is solvable, and for which for each $x$ in the Lie algebra, the adjoint operator $\mathfrak{ad}(x)$ has only real eigenvalues. The following conjecture appears in research statements I wrote around 2010 as well as earlier talks. 

\begin{conj}\label{tricon}
Two real triangulable groups $G_1,G_2$ are quasi-isometric if and only if they are isomorphic.
\end{conj}

Remark \ref{gw} provides some evidence: if $G_1,G_2$ admit isometric left-invariant Riemannian structures (or, equivalently, admit simply transitive continuous isometric actions on the same Riemannian manifold), then they are isomorphic. (This fails for more general simply connected solvable Lie groups: for instance both $\R^3$ and the universal covering of $\Isom(\R^2)$ admit such actions on the Euclidean 3-space.)

Let us mention the important result that for a real triangulable group, the dimension $\dim(G)$ is a quasi-isometry invariant, by a result of J. Roe \cite{Roe}.

A positive answer to Conjecture \ref{tricon} would entail many consequences other than the hyperbolic case, including the (internal) quasi-isometry classification of polycyclic groups. A particular case is the case of polynomial growth.

\begin{conj}\label{nilconj}
Two simply connected nilpotent Lie groups $G_1,G_2$ are quasi-isometric if and only if they are isomorphic.
\end{conj}

The latter conjecture, specified to those simply connected nilpotent Lie group admitting cocompact lattices, is equivalent to a more familiar (and complicated) conjecture concerning finitely generated nilpotent groups, namely: two finitely generated nilpotent groups are quasi-isometric if and only if they have isomorphic real Malcev closure (when a nilpotent $\Gamma$ is not torsion-free and $T$ is its torsion subgroup, its real Malcev closure is by definition the real Malcev closure of $\Gamma/T$). This is first asked by Farb and Mosher in \cite{FM00} (before their Corollary 10).

Let us mention that two finitely generated nilpotent groups are commensurable if and only they have isomorphic {\em rational} Malcev closures. Thus any pair of non-isomorphic finite-dimensional nilpotent Lie algebras $\mathfrak{g}_1,\mathfrak{g}_2$ such that $\mathfrak{g}_1\otimes\R$ and $\mathfrak{g}_2\otimes\R$ are isomorphic provides non-commensurable quasi-isometric finitely generated nilpotent groups, namely $G_1(\Z)$ and $G_2(\Z)$, where $G_i$ is the unipotent $\mathbf{Q}$-group associated to $\mathfrak{g}_i$, and some $\mathbf{Q}$-embedding $G_i\subset\GL_n$ is fixed. The smallest examples, according to the classification, are 6-dimensional. An elegant classical example is, denoting by $\mathsf{Hei}_{2n+1}$ the $(2n+1)$-dimensional Heisenberg group ($2n+1\ge 3$) and $k$ is a positive non-square integer, $\mathsf{Hei}_{2n+1}(\Z)\times \mathsf{Hei}_{2n+1}(\Z)$ and $\mathsf{Hei}_{2n+1}(\Z[\sqrt{k}])$ are not commensurable, although they are both isomorphic to cocompact lattices in $\mathsf{Hei}_{2n+1}(\R)^2$.

The main two results known in this directions are, in the setting of Conjecture \ref{nilconj}:
\begin{itemize}
\item the real Carnot Lie algebras (see Remark \ref{grada}) $\Carn(G_1)$ and $\Carn(G_2)$ are isomorphic (Pansu \cite{pansu89m});
\item (assuming that $G_1$ and $G_2$ have lattices) the real cohomology algebras of $G_1$ and $G_2$ are isomorphic (Sauer \cite{Sau06}, improving Shalom's result \cite{sh04} that the Betti numbers are equal).
\end{itemize}

Pansu's result supersedes a yet simpler result: for a simply connected nilpotent Lie group $G$, both the dimension $d$ and the degree of polynomial growth rate $\delta$ are quasi-isometry invariant. This is obvious for the growth, while for the dimension this follows from Pansu's earlier result that the asymptotic cone is homeomorphic to 
$\R^{\dim G}$ \cite{Pan83}. This covers all cases up to dimension 4, since the only possible $(d,\delta)$ are $(i,i)$ ($0\le i\le 4$), $(3,4)$, $(4,5)$, $(4,7)$, are achieved by a single simply connected nilpotent Lie group ($(i,i)$ are the abelian ones, $(3,4)$ is the 3-dimensional Heinsenberg $H_3$ and $(4,5)$ is its product with $\R$, and $(4,7)$ is the filiform 4-dimensional Lie group, of nilpotency length 3).

In dimension 5, there are 9 simply connected nilpotent Lie groups up to isomorphism. They are denoted $L_{5,i}$ in \cite{graaf} with $1\le i\le 9$, and only two are not Carnot: $L_{5,5}$ and $L_{5,6}$. Their degree of polynomial growth rate are 5, 6, 8, 6, 8, 11, 11, 7, 10 respectively. This only leaves three pairs not determined by the quasi-isometry invariance of $(d,\delta)$; the first is $L_{5,2}$ and $L_{5,4}$, where $L_{5,2}\simeq H_3\times\R^2$ and $L_{5,4}\simeq H_5$ are distinguished by Pansu's theorem. The other two pairs we discuss below are $L_{5,3}$ and $L_{5,5}$ on the one hand, $L_{5,6}$ and $L_{5,7}$ on the other hand. 

%Both pairs have the same Carnot group and thus are not distinguished by Pansu's theorem: $\Carn(L_{5,5})\simeq L_{5,3}$ and $\Carn(L_{5,6})\simeq L_{5,7}$ (in terms of Lie algebras, $\mk{l}_{5,3}$ is the product of the 1-dimensional Lie algebra and the 4-dimensional filiform Lie algebra, and $\mathfrak{l}_{5,7}$ is the standard filiform 5-dimensional Lie algebra.)

\begin{itemize}
\item For $L_{5,3}$ and $L_{5,5}$, the Betti numbers are $(1,3,4,4,3,1)$, and $\Carn(L_{5,5})\simeq L_{5,3}$ (they have growth exponent 8); thus they are distinguished neither by Pansu's theorem, nor by Shalom's theorem. A basis for the Lie algebra $\mathfrak{l}_{5,i}$ for $i=3,5$ is given as $(a,b,c,d,e)$ with for both, nonzero brackets $[a,b]=c$, $[a,c]=e$, and for $\mathfrak{l}_{5,5}$ the additional nonzero bracket $[b,d]=e$. Note that $\mk{l}_{5,3}$ is the product of the 1-dimensional Lie algebra and the 4-dimensional filiform Lie algebra.

 However, they are distinguished by Sauer's theorem: indeed, the cup product $S^2(H^2(\mathfrak{l}_{5,i}))\to H^4(\mathfrak{l}_{5,i})$ has rank 1 (in the sense of linear algebra) for $\mk{l}_{5,3}$ and 2 for $\mk{l}_{5,5}$. Thus the cohomology algebras are not isomorphic as graded algebras.

\item For both $L_{5,7}$ and $L_{5,6}$, the Betti numbers are $(1,2,3,3,2,1)$. We have $\Carn(L_{5,6})\simeq L_{5,7}$ (they have growth exponent 11); thus they are distinguished neither by Pansu's theorem, nor by Shalom's theorem.
 A basis for the Lie algebra $\mathfrak{l}_{5,i}$ for $i=7,6$ is given as $(a,b,c,d,e)$ with for both, nonzero brackets $[a,b]=c$, $[a,c]=d$, $[a,d]=e$, and for $\mathfrak{l}_{5,6}$ the additional nonzero bracket $[b,c]=e$. The Lie algebra $\mathfrak{l}_{5,7}$ is the standard filiform 5-dimensional Lie algebra.

Actually the graded cohomology algebras are isomorphic: both have a basis $(1,a_1,a_2,\penalty-10000 b_5,b_6,b_7,c_8 ,c_9,c_{10},d_{13},d_{14},e_{15})$ so that the product is commutative with unit 1, $H^1$ has basis $(a_1,a_2)$, $H_2$ has basis $(b_6,b_7,b_8)$, etc, and the nonzero products of basis elements except the unit are
\begin{itemize}
\item $a_id_{15-i}=b_{j}c_{15-j}=e_{15}$;
\item $a_1b_7=-2c_8$, $a_2b_6=c_8$, $a_2b_7=c_9$;
\item $b_6b_7=d_{13}$, $b_7b_7=-2d_{14}$.
\end{itemize}
(it is commutative because the product of any two elements of odd degree is zero). Thus $L_{5,7}$ and $L_{5,6}$, although non-isomorphic, are not distinguishable by either Pansu or Sauer's theorem. Thus they seem to be the smallest open case of Conjecture \ref{nilconj} (note that being rational, they have lattices; the rational structure being unique up to isomorphism, these lattices are uniquely defined up to abstract commensurability). Still, these can be distinguished by the adjoint cohomology: $H^1(\mathfrak{g},\mathfrak{g})$ (the space of derivations modulo inner derivations) has dimension 5 for $\mathfrak{l}_{5,7}$ and 4 for $\mathfrak{l}_{5,6}$; however it is not known if this dimension is a quasi-isometry invariant for simply connected nilpotent Lie groups. 
\end{itemize}

It would be interesting to show that the adjoint cohomology of the Lie algebra is a quasi-isometry invariant of simply connected nilpotent Lie groups (at least as a real graded vector space): indeed, as mentioned by Magnin who did comprehensive dimension computations in dimension $\le 7$ \cite{Mag08}, it is much more effective than the trivial cohomology to distinguish non-isomorphic nilpotent Lie algebras.

Indeed, the classification of 6-dimensional nilpotent Lie algebras provides 34 isomorphism classes over the field of real numbers, 26 of which are not 2-nilpotent. Among those, 13 are Carnot (over the reals); among those Lie algebras with the same Carnot Lie algebra, all have the same Betti numbers (i.e.\ the same trivial cohomology as a graded vector space) with 2 exceptions, which provide, by the way, the smallest examples for which Shalom's result improves Pansu's (Shalom \cite[\S 4.1]{sh04} provides a 7-dimensional example, attributed to Y.~Benoist). 

For the interested reader, we list the 6-dimensional real nilpotent Lie algebras, referring to \cite{graaf} for definitions, according to their Carnot Lie algebra.  Since the Betti numbers and adjoint cohomology computations are done in \cite{Mag08} with a different nomenclature, we give in each case both notations.

\begin{itemize}
\item Nilpotency length 5:
\begin{itemize}
\item Carnot: $\mathfrak{l}_{6,18}$ (standard filiform). Is the Carnot Lie algebra of 3 Lie algebras: $\mathfrak{l}_{6,i}$ for $i=18,17,15$. In \cite{Mag08}, they are denoted $\mathfrak{g}_{6,j}$ with $j=16,17,19$ (in the same order). All have the Betti numbers $(1,2,3,4,3,2,1)$. However they can be distinguished by adjoint cohomology in degree 1 (of dimension 6, 5, 4 respectively). 
\item Carnot: $\mathfrak{l}_{6,16}$. Is the Carnot Lie algebra of 2 Lie algebras: $\mathfrak{l}_{6,i}$ for $i=16,14$. Both have the Betti numbers $(1,2,2,2,2,2,1)$.
\end{itemize}
\item Nilpotency length 4:
	\begin{itemize}
	\item Carnot: $\mathfrak{l}_{6,7}$ (product of a 5-dimensional standard filiform with an 1-dimensional abelian one). Is the Carnot Lie algebra of 5 Lie algebras: $\mathfrak{l}_{6,i}$ for $i=7$, 6, 11, 12, 13. In \cite{Mag08}, they are denoted $\mathfrak{g}_{5,5}\times\R$, $\mathfrak{g}_{5,6}\times\R$, and $\mathfrak{g}_{6,j}$ for $j=12,11,13$. The first four have the Betti numbers $(1,3,5,6,5,3,1)$, while the last one has the Betti numbers $(1,3,4,4,4,3,1)$ and thus the corresponding group can be distinguished by Shalom's theorem (incidentally, the first 4 are metabelian while the last one is not). 
	\item Each of the last three Carnot Lie algebras of nilpotency length 4 are only Carnot Lie algebras of itself. They are denoted $\mathfrak{l}_{6,21}(\eps)$ for $\eps=0,1,-1$; in \cite{Mag08} they are denoted $\mathfrak{g}_{6,14}$ and $\mathfrak{g}_{6,15}$ (twice, the last two having isomorphic complexifications).
%		Carnot: $\mathfrak{l}_{6,21}(0)$ ($\mathfrak{g}_{6,14}$ in \cite{Mag08}). Only the Carnot Lie algebra of itself.
%	\item Carnot: $\mathfrak{l}_{6,21}(1)$ ($\mathfrak{g}_{6,15}$ in \cite{Mag08}). Only the Carnot Lie algebra of itself.
%	\item Carnot: $\mathfrak{l}_{6,21}(-1)$ ($\mathfrak{g}_{6,15}$ in \cite{Mag08}, who works over the complex numbers over which it is isomorphic to the previous one). Only the Carnot Lie algebra of itself.
	\end{itemize}
\item Nilpotency length 3:
	\begin{itemize}
	\item Carnot: $\mathfrak{l}_{6,9}$. It is the Carnot Lie algebra of 4 Lie algebras (3 over the complex numbers): $\mathfrak{l}_{6,9}$, $\mathfrak{l}_{6,24}(1)$, $\mathfrak{l}_{6,24}(-1)$, and $\mathfrak{l}_{6,24}(0)$. (In \cite{Mag08}, these are $\mathfrak{g}_{5,4}\times\R$, $\mathfrak{g}_{6,5}$ (twice), and $\mathfrak{g}_{6,8}$.) All have the same Betti numbers $(1,3,5,6,5,3,1)$. The dimension of the zeroth and first adjoint cohomology distinguishes them, except the two middle ones having isomorphic complexification.  
	\item Carnot: $\mathfrak{l}_{6,25}$. It is the Carnot Lie algebra of 2 Lie algebras: $\mathfrak{l}_{6,i}$ for $i=25,23$. (In \cite{Mag08}, these are $\mathfrak{g}_{6,j}$ for $j=6,7$.) They both have the Betti numbers $(1,3,6,8,6,3,1)$. The dimension of the first adjoint cohomology distinguished them.
	\item Carnot: $\mathfrak{l}_{6,3}$. It is the Carnot Lie algebra of 3 Lie algebras: $\mathfrak{l}_{6,i}$ for $i=3,5,10$. (In \cite{Mag08}, these are $\mathfrak{g}_{4}\times\R^2$, $\mathfrak{g}_{5,3}\times\R$, and $\mathfrak{g}_{6,2}$.) The first two have the Betti numbers $(1,4,7,8,7,4,1)$, while the last one has the Betti numbers $(1,4,6,6,6,4,1)$. The first two, still, can be distinguished by adjoint cohomology in degree 0 (i.e., they have centers of distinct dimension), and also in degree 1.
\item Each of the last four Carnot Lie algebras of nilpotency length 3 are only Carnot Lie algebras of itself. They are denoted $\mathfrak{l}_{6,19}(0)$, $\mathfrak{l}_{6,19}(1)$, $\mathfrak{l}_{6,19}(-1)$, and $\mathfrak{l}_{6,20}$, and are called in \cite{Mag08} $\mathfrak{g}_{6,4}$, $\mathfrak{g}_{6,9}$ for the two middle one which have isomorphic complexification, and $\mathfrak{g}_{6,10}$. They have Betti numbers $(1,3,6,8,6,3,1)$ for the first one and $(1,3,5,6,5,3,1)$ for the last three ones.
		\end{itemize}
\item Nilpotency length $\le 2$. They are all Carnot and thus distinguished by Pansu's theorem. They are denoted $\mathfrak{l}_{6,26}$, $\mathfrak{l}_{6,22}(\eps)$ for $\eps=0,1,-1$, and $\mathfrak{l}_{6,i}$ for $i=8,4,2,1$, and in \cite{Mag08} they are denoted $\mathfrak{g}_{6,3}$, $\mathfrak{g}_{6,1}$, $\mathfrak{g}_{3}\times\mathfrak{g}_{3}$ (twice), $\mathfrak{g}_{5,2}\times\R$, $\mathfrak{g}_{5,1}\times\R$, $\mathfrak{g}_{3}\times\R^3$, and $\R^6$.
\end{itemize}

Note that in all the cases above distinguished neither by Pansu nor by Shalom's theorem, I have not computed the cup product in cohomology and thus have not checked in which cases they are distinguished by Sauer's theorem.

\end{document}